\documentclass[1p]{elsarticle}
\usepackage{hyperref}
\usepackage{amssymb}
\usepackage{latexsym}
\usepackage{amscd}
\usepackage{amsthm}
\usepackage{amsfonts}
\usepackage{CJK}
\usepackage{CJKulem}
\usepackage{color}
\usepackage{colortbl}
\usepackage{fancyhdr}
\usepackage{indentfirst}
\usepackage{lastpage}
\usepackage{latexsym}
\usepackage{listings}
\usepackage{multirow}
\usepackage{mathrsfs}
\usepackage{placeins}
\usepackage{titlesec}
\usepackage{ulem}
\usepackage{graphicx} 
\usepackage{amsmath} 
\usepackage{amssymb} 
\usepackage{yfonts}
\usepackage{xypic}
\usepackage{bm}

\newdimen\AAdi%
\newbox\AAbo%
\def\AAk#1#2{\s_etbox\AAbo=\hbox{#2}\AAdi=\wd\AAbo\kern#1\AAdi{}}%
\def\AAr#1#2#3{\s_etbox\AAbo=\hbox{#2}\AAdi=\ht\AAbo\raise#1\AAdi\hbox{#3}}%
\font\tenmsb=msbm10 at 12pt \font\sevenmsb=msbm7 at 8pt
\font\fivemsb=msbm5 at 6pt
\newfam\msbfam
\textfont\msbfam=\tenmsb \scriptfont\msbfam=\sevenmsb
\scriptscriptfont\msbfam=\fivemsb
\def\Bbb#1{{\tenmsb\fam\msbfam#1}}
\newtheorem{thm}{Theorem}[section]
\newtheorem{main-thm}{Main Theorem}
\newtheorem{lem}[thm]{Lemma}
\newtheorem{cor}[thm]{Corollary}
\newtheorem{pro}[thm]{Proposition}

\newtheorem{con}[thm]{Conjecture}
\theoremstyle{remark}\newtheorem{rem}{Remark}[section]

\newcommand{\Section}[2]{\setcounter{equation}{0}
	\allowdisplaybreaks
	\section[#1]{#2}}

\def\n{\nabla}

\def\f#1#2{\frac{#1}{#2}}

\def\a{\alpha}
\def\be{\beta}

\def\de{\delta}
\def\De{\Delta}

\def\ep{\varepsilon}

\def\g{\gamma}

\def\la{\lambda}

\def\Om{\Omega}
\def\th{\theta}

\def\R{\Bbb{R}}

\def\lan{\langle}
\def\ran{\rangle}
\def\ra{\rightarrow}

\def\ol{\overline}

\begin{document}
	\title
	{An optimal pinching theorem on compact minimal submanifolds in the Euclidean spheres via eigenvalues of fundamental matrices}
	
	\author[f]{Huimin Liu}
	\ead{hmliu20@fudan.edu.cn}
	\author[f,m]{Ling Yang}
	\ead{yanglingfd@fudan.edu.cn}
	
	\address[f]{School of Mathematical Sciences, Fudan University, Shanghai, 200433, China}
	\address[m]{Shanghai Center for Mathematical Sciences, Shanghai, 200438, China}


	\begin{abstract}
    Based on an inequality on the upper bound of the sum of squared norms of Lie bracket
    of symmetric matrices, we establish a rigidity theorem for compact minimal submanifolds
    in the Euclidean spheres via eigenvalues of fundamental matrices, which are the critical values
    of the squared norms of the second fundamental form on all normal directions. This conclusion is
    optimal for all dimensions without the restriction of the codimension, giving a new characterization for
    generalized Clifford tori and Veronese manifolds.


	\end{abstract}

	
	\maketitle
	
	\tableofcontents
	
	\renewcommand{\proofname}{\it Proof.}
	
	
	\Section{Introduction}{Introduction}

    In 1968, J. Simons \cite{S} proved a well-known rigidity theorem on minimal submanifolds in Euclidean spheres as follows:
    \begin{thm}\label{Si1}
    Let $M$ be an $n$-dimensional compact minimal submanifold
    in $S^{n+m}$, denote by $|B|^2$ be the square norm of the second fundamental form $B$ of $M$, then
    \begin{equation}\label{Si}
    \int_M |B|^2\left[\left(2-\f{1}{m}\right)|B|^2-n\right]dM\geq 0.
    \end{equation}
    As a corollary, the pinching condition $0\leq |B|^2\leq \f{n}{2-\f{1}{m}}$ forces $|B|^2\equiv 0$ or $|B|^2\equiv \f{n}{2-\f{1}{m}}$.
    \end{thm}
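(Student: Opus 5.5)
The plan is the classical Simons argument: compute $\frac12\Delta|B|^2$, estimate the quadratic term by an algebraic matrix inequality, and integrate over the compact $M$.

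\emph{Set-up.} Choose a local orthonormal frame $\{e_i\}_{i=1}^n$ tangent to $M$ and $\{\nu_\alpha\}_{\alpha=1}^m$ normal to $M$. Write $h^\alpha_{ij}=\langle B(e_i,e_j),\nu_\alpha\rangle$ and let $A_\alpha=(h^\alpha_{ij})$ be the symmetric fundamental matrices. Minimality means $\operatorname{tr}A_\alpha=0$.

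\emph{Simons' identity.} The Codazzi equation holds because the ambient sphere has constant curvature, so $h^\alpha_{ijk}$ is totally symmetric. Using this and the Ricci identity to commute covariant derivatives, together with the Gauss and Ricci equations and the trace-free condition, I will derive
\[
\frac12\Delta|B|^2=|\nabla B|^2+n|B|^2-\sum_{\alpha,\beta}\bigl|[A_\alpha,A_\beta]\bigr|^2-\sum_{\alpha,\beta}\bigl(\operatorname{tr}(A_\alpha A_\beta)\bigr)^2 .
\]
This is a routine but careful index computation. The term $n|B|^2$ comes from the curvature $1$ of $S^{n+m}$ combined with $\operatorname{tr}A_\alpha=0$.

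\emph{Algebraic estimate.} This is the main step. I need to show
\[
\sum_{\alpha,\beta}|[A_\alpha,A_\beta]|^2+\sum_{\alpha,\beta}(\operatorname{tr}A_\alpha A_\beta)^2\le\Bigl(2-\frac1m\Bigr)|B|^4 .
\]

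1. Both sums are invariant under orthogonal changes of the normal frame. I can therefore rotate the normal frame so that the matrix $(\operatorname{tr}A_\alpha A_\beta)$ is diagonal, with entries $\sigma_\alpha=|A_\alpha|^2$. The second sum then becomes $\sum_\alpha\sigma_\alpha^2$.
2. For the first sum I use the elementary bound $|[A,B]|^2\le 2|A|^2|B|^2$ for symmetric $A,B$. To prove it, diagonalize $A$ with eigenvalues $a_i$; then $|[A,B]|^2=\sum_{i\ne j}(a_i-a_j)^2b_{ij}^2$ and $(a_i-a_j)^2\le 2(a_i^2+a_j^2)\le 2|A|^2$.
3. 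Combining these,
\[
\text{LHS}\le 2\sum_{\alpha\ne\beta}\sigma_\alpha\sigma_\beta+\sum_\alpha\sigma_\alpha^2=2\Bigl(\sum_\alpha\sigma_\alpha\Bigr)^2-\sum_\alpha\sigma_\alpha^2 .
\]
4. By Cauchy--Schwarz, $\sum_\alpha\sigma_\alpha^2\ge\frac1m\bigl(\sum_\alpha\sigma_\alpha\bigr)^2=\frac1m|B|^4$, which gives the claimed bound $\bigl(2-\frac1m\bigr)|B|^4$.

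\emph{Integration.} Substituting the estimate into the identity gives
\[
\frac12\Delta|B|^2\ge|\nabla B|^2+|B|^2\Bigl(n-\Bigl(2-\frac1m\Bigr)|B|^2\Bigr).
\]
Integrating over the compact $M$, the left side vanishes by the divergence theorem. Discarding $|\nabla B|^2\ge0$ then yields \eqref{Si}.

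\emph{Corollary.} Assume $0\le|B|^2\le\frac{n}{2-\frac1m}$. Then the integrand in \eqref{Si} is pointwise $\le0$, while its integral is $\ge0$, so $|B|^2\bigl[(2-\frac1m)|B|^2-n\bigr]\equiv0$. Hence at each point $|B|^2$ equals $0$ or $\frac{n}{2-\frac1m}$. Since $|B|^2$ is continuous and $M$ is connected, it is identically one of these two constants.
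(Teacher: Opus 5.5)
Your proposal is correct and follows the route the paper indicates: combine Simons' identity (\ref{Si2}) with the estimate $\|[A,B]\|^2\leq 2\|A\|^2\|B\|^2$. Your diagonalization of the fundamental matrix and the Cauchy--Schwarz step $\sum_\alpha\sigma_\alpha^2\geq \frac{1}{m}|B|^4$ fill in the details the paper leaves implicit. The only tacit assumption is that $M$ is connected in the corollary, which is standard; otherwise the dichotomy holds on each component.
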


    As shown by Chern-do Carmo-Kobayashi \cite{C-D-K} and B. Lawson \cite{L}, $|B|^2\equiv \f{n}{2-\f{1}{m}}$ implies $M$ is a generalized Clifford torus or a Veronese surface. 
    Namely,
    Simons' theorem describes the first gap of $|B|^2$, which is optimal for the hypersurface cases (i.e. $m=1$) or the 2-dimensional cases.
    This conclusion is an important application of Bochner's technique in the theory of minimal submanifolds.
    For each $x\in M$, let $\{e_1,\cdots,e_n\}$ and $\{\nu_1,\cdots,\nu_m\}$ be orthonormal basis of the tangent space
   and the normal space at $x$, respectively,
   \begin{equation}
   A^\a:=\Big(\lan B_{e_ie_j},\nu_\a\ran\Big)\qquad \forall \a=1,\cdots,m
   \end{equation}
   be the matrix of the second fundament form w.r.t. each normal direction, then
   \begin{equation}\label{Si2}
   \f{1}{2}\De |B|^2=|\n B|^2+n|B|^2-\sum_{\a,\be}\|[A^\a,A^\be]\|^2-\sum_{\a,\be}\big(\text{tr}(A^\a A^\be)\big)^2,
   \end{equation}
   where $\De$ is the Laplace-Beltrami operator on $M$, $[\cdot,\cdot]$ is the Lie bracket and $\|\cdot\|$ denotes the Hilbert-Schmidt norm of matrices (see (\ref{HS})).
   Thereby, Simons' inequality (\ref{Si}) directly follows from (\ref{Si2}) and the following estimate
   \begin{equation}
   \|[A,B]\|^2\leq 2\|A\|^2\|B\|^2
   \end{equation}
   with $A,B$ being both $(n\times n)$-matrices. Note that the terms $\|[A^\a,A^\be]\|^2$
   have a close relationship to the curvature of the normal bundle of $M$, which automatically vanish
   whenever $M$ is a hypersurface. Therefore, how to give an estimate for these terms is one of the key points to studying the rigidity
   of compact minimal submanifolds of higher codimension. This work has been done by Li-Li \cite{L-L} and Chen-Xu \cite{C-X} independently.
   Based on
   \begin{equation}
   \sum_{\a,\be}\|[A^\a,A^\be]\|^2+\sum_{\a,\be}\big(\text{tr}(A^\a A^\be)\big)^2\leq \f{3}{2}\Big(\sum_\a \|A^\a\|^2\Big)^2,
   \end{equation}
   they derived a rigidity theorem whose condition is weaker than Theorem \ref{Si1}:
    \begin{thm}\label{Li}
   Let $M$ be an $n$-dimensional compact minimal submanifold
    in $S^{n+m}$ with $m\geq 2$. If $|B|^2\leq \f{2n}{3}$, then $M$ is a totally geodesic subsphere $(|B|^2\equiv 0)$,
     or the Veronese surface $($here $n=2$ and $|B|^2\equiv \f{4}{3}$$)$.
   \end{thm}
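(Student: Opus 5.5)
The plan is to integrate Simons' identity (\ref{Si2}) over the compact manifold $M$ and bound the algebraic terms by the Li--Li/Chen--Xu inequality. Since $\int_M \De |B|^2\,dM=0$ and $\sum_\a\|A^\a\|^2=|B|^2$, the inequality
\begin{equation*}
\sum_{\a,\be}\|[A^\a,A^\be]\|^2+\sum_{\a,\be}\big(\text{tr}(A^\a A^\be)\big)^2\leq \f{3}{2}|B|^4
\end{equation*}
gives
\begin{equation*}
0=\int_M\Big(|\n B|^2+n|B|^2-\sum_{\a,\be}\|[A^\a,A^\be]\|^2-\sum_{\a,\be}\big(\text{tr}(A^\a A^\be)\big)^2\Big)dM\geq \int_M |\n B|^2\,dM+\int_M |B|^2\Big(n-\f{3}{2}|B|^2\Big)dM.
\end{equation*}
Under the hypothesis $|B|^2\leq \f{2n}{3}$ both integrands are nonnegative, so both vanish pointwise. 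Hence $\n B\equiv 0$, and at every point either $|B|^2=0$ or $|B|^2=\f{2n}{3}$ with equality in the matrix inequality. Because $B$ is parallel, $|B|^2$ is constant on $M$ (we may take $M$ connected). If the constant is $0$, then $M$ is a totally geodesic subsphere.

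The remaining case is $|B|^2\equiv \f{2n}{3}>0$ together with equality in the Li--Li estimate at each point. The key algebraic step is to characterize this equality case; this is where I expect the real work to lie. I would first diagonalize the symmetric matrix $(\text{tr}(A^\a A^\be))$ by rotating the normal frame, so that the $A^\a$ are mutually orthogonal. Next I would trace through the proof of the inequality, which combines the bound $\|[A,B]\|^2\leq 2\|A\|^2\|B\|^2$ with a quadratic-form estimate on the norms $\|A^\a\|^2$. Equality should then force the following structure:
\begin{itemize}
\item at most two of the $A^\a$ are nonzero, say $A^1,A^2$, with $\|A^1\|=\|A^2\|$;
\item in a suitable tangent basis, $A^1$ is a multiple of $\mathrm{diag}(1,-1,0,\dots,0)$ and $A^2$ is a multiple of the symmetric matrix with $1$ in the $(1,2)$ and $(2,1)$ entries and zeros elsewhere.
\end{itemize}
I expect this equality analysis, particularly ruling out three or more nonzero $A^\a$, to be the main obstacle. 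I would handle it by examining when the sum of the Simons bounds over pairs $(\a,\be)$, $\a\neq\be$, can be simultaneously sharp.

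Finally, the shape of the second fundamental form constrains the dimension. Every $A^\a$ vanishes on the span of $e_3,\dots,e_n$, so the first normal space is $2$-dimensional and $B(e_i,\cdot)=0$ for $i\geq 3$. Since $\n B=0$, this null distribution is parallel. The Gauss equation then shows that the null directions are totally geodesic in $S^{n+m}$ and carry curvature $1$, while mixed sectional curvatures with the $e_1,e_2$ plane are also $1$. This is incompatible with $B\neq 0$ unless $n=2$; I would derive the contradiction by applying the Codazzi equation and $\n B=0$ to mixed components, which forces $|B|^2=0$ when $n\geq 3$.

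For $n=2$, the value $|B|^2=\f43$ together with $\n B=0$, minimality, and a $2$-dimensional first normal space places $M$ in a totally geodesic $S^4$. There the result of Chern--do Carmo--Kobayashi cited above identifies $M$ as the Veronese surface.
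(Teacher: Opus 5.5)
\textbf{Verdict: correct skeleton and correct equality case, but the method you propose for proving that equality case cannot work.} Your overall route (integrate (\ref{Si2}), insert the $\f32$-inequality, get $\n B\equiv0$ and pointwise equality, then read off the equality configuration) is the classical Li--Li/Chen--Xu argument. The paper does not reprove the statement: it quotes it and notes it is the special case $|B|^2+\la_2\le\f32|B|^2\le n$ of Theorem \ref{Lu1}. Your predicted normal form for the equality case is also correct.

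\textbf{The gap: ruling out a third nonzero $A^\a$.} The $\f32$-inequality is \emph{not} a consequence of the pairwise bounds $\|[A^\a,A^\be]\|^2\le 2\la_\a\la_\be$ combined with any estimate on the norms. In a frame with $\lan A^\a,A^\be\ran=\la_\a\de_{\a\be}$, summing the pairwise bounds gives exactly $2|B|^4-\sum_\a\la_\a^2$. The refined bound $\|[X,Y]\|^2\le 2(\|X\|^2\|Y\|^2-\lan X,Y\ran^2)$ gives the same frame-independent quantity $2(\mathrm{tr}\,S)^2-\mathrm{tr}(S^2)$ in any frame. This exceeds $\f32|B|^4$ whenever $\sum_\a\la_\a^2<\f12|B|^4$, for instance when three of the $\la_\a$ are equal and nonzero. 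So with three or more nonzero $A^\a$ the pairwise chain does not even prove the inequality. Asking when the pairwise bounds are simultaneously sharp therefore cannot exclude that case, which is exactly the step you flagged as the crux.

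\textbf{How to repair it.} You need a one-versus-all estimate. Lu's inequality (Proposition \ref{Lu}) gives $\sum_{\be\ne\a}\|[A^\a,A^\be]\|^2\le\la_\a\big(\max_{\be\ne\a}\la_\be+\sum_{\be\ne\a}\la_\be\big)$. The max is $\la_2$ for $\a=1$ and $\la_1$ for $\a\ge2$. Summing over $\a$ and setting $T:=|B|^2-\la_1$, you get
\[
\sum_{\a,\be}\|[A^\a,A^\be]\|^2+\sum_\a\la_\a^2\le|B|^4+\la_1(T+\la_2),\qquad \f32|B|^4-|B|^4-\la_1(T+\la_2)=\f12(\la_1-T)^2+\la_1(T-\la_2)\ge0 .
\]
Equality with $B\ne0$ forces $\la_2=T$, i.e.\ $\la_3=\cdots=\la_m=0$, and also $\la_1=\la_2$. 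Equality then reduces to $\|[A^1,A^2]\|^2=2\|A^1\|^2\|A^2\|^2$, and the Chern--do Carmo--Kobayashi equality case gives your normal form.

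\textbf{A smaller imprecision in the dimension step.} Once $\n B=0$, the Codazzi equations carry no information, so they cannot produce the contradiction for $n\ge3$. What does is the Ricci identity (\ref{dXY}), which gives $R_{XY}B=0$.
\begin{itemize}
\item Since $B(e_3,\cdot)=0$, the Gauss equation gives $R_{e_1e_3}e_3=-e_1$ and $R_{e_2e_3}e_3=-e_2$.
\item Evaluating $(R_{e_1e_3}B)_{e_1e_3}$, $(R_{e_2e_3}B)_{e_2e_3}$ and $(R_{e_1e_3}B)_{e_2e_3}$ then yields $B_{e_1e_1}=B_{e_2e_2}=B_{e_1e_2}=0$, contradicting $B\ne0$.
\end{itemize}
Equivalently, the parallel splitting $\mathrm{span}\{e_1,e_2\}\oplus\mathrm{span}\{e_3,\dots,e_n\}$ forces the mixed sectional curvatures to be $0$, while the Gauss equation makes them $1$. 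Your $n=2$ step is fine: you reduce to $S^4$ and apply Chern--do Carmo--Kobayashi with $m=2$, where $\f{n}{2-1/m}=\f43$.
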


   Let
    \begin{equation}
   (S_{\a\be}):=\big(\text{tr}(A^\a A^\be)\big)=\left(\sum_{i,j}\lan B_{e_ie_j},\nu_\a\ran \lan B_{e_ie_j},\nu_\be\ran\right)
   \end{equation}
be the {\it fundamental matrix} at $x$. Z. Q. Lu \cite{Lu} studied the rigidity problem via eigenvalues of $(S_{\a\be})$ and established
   the following pinching theorem:
   \begin{thm}\label{Lu1}
Let $M$ be an $n$-dimensional compact minimal submanifold in $S^{n+m}$ and $\la_2$ be the second large eigenvalue of the fundamental matrix
at each point. If $|B|^2+\la_2\leq n$, then $M$ is a totally geodesic subsphere $(|B|^2+\la_2\equiv 0)$,
a generalized Clifford torus $(|B|^2\equiv n$ and $\la_2\equiv 0)$ or the Veronese surface $(|B|^2+\la_2\equiv 2)$.
\end{thm}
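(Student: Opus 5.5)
Integrating Simons' identity (\ref{Si2}) over the closed manifold $M$ gives
\begin{equation*}
0=\int_M\Big(|\n B|^2+n|B|^2-\sum_{\a,\be}\|[A^\a,A^\be]\|^2-\sum_{\a,\be}\big(\text{tr}(A^\a A^\be)\big)^2\Big)dM .
\end{equation*}
The whole argument therefore rests on a pointwise algebraic estimate for the last two sums in terms of the eigenvalues of the fundamental matrix.

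First, since $(S_{\a\be})$ is symmetric, we rotate the normal frame $\{\nu_\a\}$ at each point so that it is diagonal. Then $\text{tr}(A^\a A^\be)=\la_\a\de_{\a\be}$ with $\la_1\geq\la_2\geq\cdots\geq\la_m\geq 0$, $\la_\a=\|A^\a\|^2$ and $\sum_\a\la_\a=|B|^2$. In this frame
\begin{equation*}
\sum_{\a,\be}\big(\text{tr}(A^\a A^\be)\big)^2=\sum_\a\la_\a^2 .
\end{equation*}
For the bracket terms we use $\|[A,B]\|^2\leq 2\|A\|^2\|B\|^2$ for symmetric matrices, which gives
\begin{equation*}
\sum_{\a\neq\be}\|[A^\a,A^\be]\|^2\leq 2\sum_{\a\neq\be}\la_\a\la_\be .
\end{equation*}
We then split off the top eigenvalue $\la_1$. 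This yields
\begin{equation*}
2\sum_{\a\neq\be}\la_\a\la_\be+\sum_\a\la_\a^2
=\sum_\a \la_\a\Big(\la_\a+2\sum_{\be\neq\a}\la_\be\Big)\leq\sum_\a\la_\a\big(|B|^2+\la_2\big),
\end{equation*}
using $\la_\a+2\sum_{\be\neq\a}\la_\be=2|B|^2-\la_\a$.

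This crude form does not suffice: for $\a=1$ we need $2|B|^2-\la_1\leq |B|^2+\la_2$, i.e.\ $\sum_{\be\geq 3}\la_\be\leq 0$, which is false in general. This is the main obstacle, and the fix is a sharper bracket bound involving $\la_1$ only once. The classical refinement (used by Li--Li) states that for symmetric $A^\a$, the full sum $\sum_{\a,\be}\|[A^\a,A^\be]\|^2$ cannot saturate every pair simultaneously. I will aim to prove
\begin{equation*}
\sum_{\a,\be}\|[A^\a,A^\be]\|^2+\sum_\a\la_\a^2\leq \big(|B|^2+\la_2\big)|B|^2 .
\end{equation*}
Expanding, this amounts to
\begin{equation*}
\sum_{\a\neq\be}\|[A^\a,A^\be]\|^2\leq 2\la_1\sum_{\be\geq2}\la_\be+\Big(\sum_{\a\geq 2}\la_\a\Big)^2-\sum_{\a\geq2}\la_\a^2+\la_2|B|^2-\la_1^2+\cdots .
\end{equation*}
I would prove it by bounding pairs with $\a=1$ by $2\la_1\la_\be$. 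For pairs among $\a,\be\geq2$, I would use the Li--Li bound $\|[A,B]\|^2+\|[B,C]\|^2+\cdots$, i.e.\ their inequality $\sum\|[A^\a,A^\be]\|^2+\sum\la_\a^2\leq\tfrac32(\sum\la_\a)^2$ applied to the subfamily, and then compare with $\la_2\sum_{\a\geq2}\la_\a$. If that comparison fails, I would instead try to prove directly a Lie bracket inequality of the form $\sum_{\a\neq\be}\|[A^\a,A^\be]\|^2\leq 2\la_2|B|^2$-type plus the $\la_1$ cross terms, working in a basis diagonalizing $A^1$.

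Granting the pointwise estimate, the integrated identity gives
\begin{equation*}
0\geq\int_M\Big(|\n B|^2+|B|^2\big(n-|B|^2-\la_2\big)\Big)dM .
\end{equation*}
Under the hypothesis $|B|^2+\la_2\leq n$ the integrand is nonnegative, so $\n B\equiv0$ and $|B|^2(n-|B|^2-\la_2)\equiv0$, with equality throughout in the algebraic estimate. It remains to analyse the equality cases. If $|B|^2\equiv0$, $M$ is totally geodesic. Otherwise $|B|^2+\la_2\equiv n$ and $B$ is parallel. Equality in the bracket bounds forces either all $A^\a$ to commute with only $\la_1\neq0$, i.e.\ a hypersurface-type situation with $\la_2=0$ and $|B|^2=n$, giving a generalized Clifford torus by Chern--do Carmo--Kobayashi and Lawson. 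The alternative is exactly two nonzero $\la$'s with $A^1,A^2$ of the rank-two form $\mathrm{diag}$ and off-diagonal $2\times2$ blocks, giving $n=2$ and the Veronese surface with $|B|^2+\la_2=2$.
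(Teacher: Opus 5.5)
There is a genuine gap. The pointwise estimate you set out to prove,
\[
\sum_{\alpha,\beta}\|[A^\alpha,A^\beta]\|^2+\sum_\alpha\lambda_\alpha^2\leq\big(|B|^2+\lambda_2\big)|B|^2,
\]
is false. Take $A^1=\sqrt{\lambda_1/2}\,\mathrm{diag}(1,-1)$, $A^2=\sqrt{\lambda_2/2}\,(E_{12}+E_{21})$ (padded with zeros if $n>2$), and $A^\alpha=0$ for $\alpha\geq3$, with $\lambda_1>\lambda_2>0$. Then $\|[A^1,A^2]\|^2=2\lambda_1\lambda_2$. The left side is $\lambda_1^2+\lambda_2^2+4\lambda_1\lambda_2$, the right side is $\lambda_1^2+3\lambda_1\lambda_2+2\lambda_2^2$, and their difference is $\lambda_2(\lambda_1-\lambda_2)>0$.

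The failure is structural. In the row of $A^\alpha$ with $\alpha\geq2$, the worst partner is $A^1$. So the sharp row bound is $\lambda_\alpha(\lambda_1+|B|^2-\lambda_\alpha)$, which exceeds what you need by $\lambda_\alpha(\lambda_1-\lambda_2)$. Only the row of $A^1$ is controlled by $|B|^2+\lambda_2$.

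For $n=2$ your example is the general pointwise configuration. On the boundary $\lambda_1+2\lambda_2=2$ of the hypothesis, the zeroth-order term of Simons' identity is $2|B|^2-\lambda_1^2-\lambda_2^2-4\lambda_1\lambda_2=\lambda_2(3\lambda_2-2)$, which is negative for $0<\lambda_2<\frac23$. So the hypothesis does not make the integrand of $\frac12\Delta|B|^2$ nonnegative, and no bracket estimate can rescue the scheme of integrating Simons' identity.

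The missing idea is to weight by the top eigenvalue of the fundamental matrix rather than by its trace. In a normal frame diagonalizing $S$ at $x$, (\ref{de5}) gives $\frac12\nabla^2S_{11}=|\nabla A^1|^2+n\lambda_1-\lambda_1^2-\sum_\beta\|[A^1,A^\beta]\|^2$. Lu's inequality (Proposition \ref{Lu}) concerns only brackets against $A^1$ and gives $\sum_{\beta\geq2}\|[A^1,A^\beta]\|^2\leq\lambda_1\big(\lambda_2+\sum_{\beta\geq2}\lambda_\beta\big)$. Hence $\frac12\nabla^2S_{11}\geq|\nabla A^1|^2+\lambda_1(n-|B|^2-\lambda_2)$.

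Since $\lambda_1$ is not smooth where eigenvalues collide, one works with $g_p=(\mathrm{tr}\,S^p)^{1/p}$ and lets $p\to\infty$. This yields $\int_M\big(\lambda_1(n-|B|^2-\lambda_2)+\frac1r\sum_{\alpha\leq r}|\nabla A^\alpha|^2\big)\leq0$, which is Lu's argument and the scheme of Proposition \ref{p1}.

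Your equality analysis would also need revising. This route gives only $\nabla A^\alpha=0$ for the top eigendirections, not $\nabla B\equiv0$. It also gives pointwise equality in Lu's inequality, whose equality configurations are $A_1\propto\mathrm{diag}(k,-I_k,O)$ and $A_\alpha=\mu(E_{1\alpha}+E_{\alpha1})$ for $2\leq\alpha\leq k+1$. These are far richer than your commuting/rank-two dichotomy. Ruling them out for $n\geq3$ takes a genuine Codazzi--Gauss--Ricci argument, of the kind the paper carries out in its final section.
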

Observing that $|B|^2=\text{tr}(S_{\a\be})=\sum\limits_{\a} \la_\a$, $|B|^2\leq \f{2n}{3}$ implies $|B|^2+\la_2\leq n$ and hence
Theorem \ref{Lu1} is an improvement of Theorem \ref{Li}. However, for the cases of $n\geq 3$, it is still unknown whether there exists
a pinching condition forcing $M$ to be a minimal submanifold with nonflat normal bundle.

In this paper, we shall do essentially better, establishing the following pinching theorem on eigenvalues of fundamental matrices:
\begin{thm}\label{main-thm1}
Let $M$ be an $n$-dimensional compact minimal submanifold in $S^{n+m}$,
$\la_1\geq \cdots \geq \la_m$ be the eigenvalues of the fundamental matrix at each point, if
\begin{equation}\label{fun-eig}
\sum_{\a=1}^{\min\{n,m\}}\la_\a+\la_2\leq n,
\end{equation}
then $M$ is totally geodesic $\left(\sum\limits_{\a=1}^{\min\{n,m\}}\la_\a+\la_2 \equiv 0\right)$ or $\sum\limits_{\a=1}^{\min\{n,m\}}\la_\a+\la_2 \equiv n$.
\end{thm}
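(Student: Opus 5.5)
We follow the Bochner-type argument behind Theorem \ref{Si1}, integrating Simons' identity (\ref{Si2}) over $M$. The one new ingredient is a sharper algebraic bound on the curvature terms. Diagonalize the fundamental matrix at each point by choosing the normal frame $\{\nu_\a\}$ so that $S_{\a\be}=\la_\a\delta_{\a\be}$, with $\la_\a=\|A^\a\|^2$ and $\la_1\geq\cdots\geq\la_m$. Then $\sum_{\a,\be}(\text{tr}(A^\a A^\be))^2=\sum_\a\la_\a^2$, and the goal is the pointwise estimate
\begin{equation*}
\sum_{\a,\be}\|[A^\a,A^\be]\|^2+\sum_\a\la_\a^2\leq \Big(\sum_{\a=1}^{\min\{n,m\}}\la_\a+\la_2\Big)|B|^2 .
\end{equation*}
Once this holds, (\ref{Si2}) gives
\begin{equation*}
\f12\De|B|^2\geq |\n B|^2+|B|^2\Big(n-\sum_{\a=1}^{\min\{n,m\}}\la_\a-\la_2\Big)\geq 0 .
\end{equation*}
Integrating over the compact $M$ then forces $\n B\equiv0$ and $|B|^2\big(n-\sum_{\a\le\min\{n,m\}}\la_\a-\la_2\big)\equiv0$. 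Since $\n B\equiv 0$, $|B|^2$ and all the $\la_\a$ are constant. So either $|B|^2\equiv0$, in which case $M$ is totally geodesic and the sum is $0$, or the sum is identically $n$.

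The main obstacle is the matrix inequality. It is a refinement of the Li--Li / Chen--Xu estimate: for symmetric $A^1,\dots,A^m$ with $\text{tr}(A^\a A^\be)=\la_\a\delta_{\a\be}$, we need
\begin{equation*}
\sum_{\a\neq\be}\|[A^\a,A^\be]\|^2\leq \sum_\a\la_\a\Big(\sum_{\g=1}^{\min\{n,m\}}\la_\g+\la_2\Big)-\sum_\a\la_\a^2 .
\end{equation*}

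For pairs of indices we use the B\"ottcher--Wenzel type bound $\|[A,B]\|^2\leq2\|A\|^2\|B\|^2$. The difficulty is summing over many directions without losing the $\min\{n,m\}$ truncation. For this we view the family $\{A^\a\}$ as vectors in the space of symmetric $n\times n$ matrices. For a fixed $\a$, the map $X\mapsto[A^\a,X]$, restricted to the span of the other $A^\be$, has Hilbert--Schmidt operator norm at most $\sqrt2\|A^\a\|$, controlled on each orthogonal direction.

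I would first diagonalize $A^1$, writing it as $\text{diag}(a_1,\dots,a_n)$. Then
\begin{equation*}
\|[A^1,X]\|^2=\sum_{i,j}(a_i-a_j)^2x_{ij}^2 .
\end{equation*}
The terms involving $A^1$ are handled by $\sum_{\be}x^{\be}_{ij}x^{\be}_{ij}\le$ (an $n$-dimensional constraint on the entries). This lets us bound them with only $\min\{n,m\}$ effective directions, via a rank argument: the matrices $(x^\be_{ij})_\be$ indexed by $(i,j)$ have at most $n$ nontrivial contributions per row.

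I expect to proceed by induction on $m$. We treat the largest two eigenvalues $\la_1,\la_2$ specially, since they produce the extra $\la_2$ term, and bound the remaining pairs using the convexity $\sum_{\a<\be}\la_\a\la_\be\le\ldots$ together with ordering. Checking equality cases (the Clifford tori and Veronese manifolds) will guide the constants. Verifying this truncated inequality sharply is where most of the work lies.
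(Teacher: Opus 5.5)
Your key pointwise estimate is false, so the argument does not close. Take $n=m=2$, $A^1=\f{a}{\sqrt2}\mathrm{diag}(1,-1)$ and $A^2=\f{b}{\sqrt2}(E_{12}+E_{21})$ with $a>b>0$. Then $\la_1=a^2>\la_2=b^2$, $[A^1,A^2]=ab(E_{12}-E_{21})$ and $\|[A^1,A^2]\|^2=2\la_1\la_2$, so
\begin{equation*}
\sum_{\a,\be}\|[A^\a,A^\be]\|^2+\sum_\a\la_\a^2-\Big(\sum_{\a=1}^{2}\la_\a+\la_2\Big)|B|^2=(\la_1^2+4\la_1\la_2+\la_2^2)-(\la_1+2\la_2)(\la_1+\la_2)=\la_2(\la_1-\la_2)>0 .
\end{equation*}
This is not just a lossy intermediate step. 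Since $\dim\mathrm{Sym}_2^0=2$, every point of a minimal surface has this form. On the pinching boundary $\la_1+2\la_2=2$ with $0<\la_2<\f23$, the zeroth-order term of (\ref{Si2}) is
\begin{equation*}
2|B|^2-\la_1^2-\la_2^2-4\la_1\la_2=\la_2(3\la_2-2)<0 .
\end{equation*}
So the pointwise Bochner argument for $|B|^2$ fails even for surfaces, where the statement reduces to Lu's Theorem \ref{Lu1}.

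The cause is structural. In the sharp single-direction bound (Theorem \ref{ineq1}), the extra term is the largest norm among the \emph{other} directions. For a top direction this is $\la_2$, but for every $A^\a$ with $\la_\a<\la_1$ it is $\la_1$. Since $|B|^2=\text{tr}\,S$ weights all directions equally, the summed per-direction bounds exceed your target by $(\la_1-\la_2)\sum_{\a\geq 2}\la_\a$, and the example above attains this excess.

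The missing idea is Lu's device of working with $g_p=(\text{tr}\,S^p)^{1/p}$ instead of $\text{tr}\,S$. In $\De f_p$ the per-direction identities (\ref{de5}) enter with weights $\la_\a^{p-1}$, and the gradient terms are absorbed by the Cauchy--Schwarz estimate (\ref{f2}). As $p\to\infty$ the weights concentrate on the top eigenspace, where the extra term really is $\la_2$. This yields Proposition \ref{p1}, whose integrand $\la_1\big(n-\sum_{\be\le\min\{n,m\}}\la_\be-\la_2\big)$ carries the weight $\la_1$, not $|B|^2$. The dichotomy then follows: $\la_1=0$ forces $B=0$ at that point, so on connected $M$ the continuous function $\sum_{\a\le\min\{n,m\}}\la_\a+\la_2$ takes only the values $0$ and $n$. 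This route gives only $\n A^\a=0$ in the top directions, not $\n B\equiv 0$, and nothing more is needed.

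Separately, your sketch of the truncated single-direction inequality (a rank argument plus induction on $m$) is not a proof. The B\"ottcher--Wenzel bound only gives partial sums $\le 2k$ of the top eigenvalues of $(\operatorname{ad}A)^2$, which loses the $\min\{n,m\}$ truncation. The paper obtains the truncation in three steps:
\begin{itemize}
\item It applies the Ky Fan-type Lemma \ref{eig} to $(\operatorname{ad}A)^2$ on $\mathrm{Sym}_n$, whose eigenvalues are $(\eta_i-\eta_j)^2$.
\item It bounds the partial sums of these eigenvalues by $\min\{k+1,n\}$ using Lemma \ref{sq-diff}.
\item It combines the two by Abel summation.
\end{itemize}
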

This condition is optimal for all dimensions without the restriction of codimension, since both generalized Clifford tori and Veronese manifolds are compact minimal submanifolds
in Euclidean spheres, satisfying $\sum\limits_{\a=1}^{\min\{n,m\}}\la_\a+\la_2 \equiv n$ (see Theorem \ref{Clifford-Veronese}). Moreover,
we establish a new characterization for such two submanifolds of dimension 2 and 3 as follows.
\begin{thm}\label{main-thm}
Let $M$ be an $n$-dimensional compact minimal submanifold in $S^{n+m}$, satisfying $\sum\limits_{\a=1}^{\min\{n,m\}}\la_\a+\la_2 \equiv n$, then
\begin{itemize}
\item If $n=2$, then $M$ is a Clifford torus $(\la_1\equiv 2$, $\la_2\equiv 0)$ or a Veronese surface $(\la_1\equiv\la_2\equiv\f{2}{3},\la_3\equiv 0)$;
\item If $n=3$, then $M$ is a generalized Clifford torus $(\la_1\equiv 3$, $\la_2\equiv 0)$ or a Veronese $3$-manifold $(\la_1\equiv \cdots\equiv \la_5\equiv \f{3}{4},\la_6\equiv 0)$.
\end{itemize}
\end{thm}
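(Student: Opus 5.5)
We plan to extract the pointwise equality case from the proof of Theorem \ref{main-thm1}. That proof rests on (\ref{Si2}) together with an algebraic estimate of the form
\begin{equation*}
\sum_{\a,\be}\|[A^\a,A^\be]\|^2+\sum_\a\la_\a^2\leq \Big(\sum_{\a=1}^{\min\{n,m\}}\la_\a+\la_2\Big)\sum_\a\la_\a ,
\end{equation*}
for traceless symmetric $A^\a$, where the normal frame is chosen to diagonalize $(S_{\a\be})$. If $\sum_{\a\leq\min\{n,m\}}\la_\a+\la_2\equiv n$, then integrating (\ref{Si2}) over $M$ gives
\begin{equation*}
0\geq \int_M|\n B|^2\,dM .
\end{equation*}
Hence $\n B\equiv 0$, i.e.\ $M$ has parallel second fundamental form, and the algebraic inequality is an equality at every point.

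The main step, and the main obstacle, is to characterize the equality case of this matrix inequality for $n=2,3$. We will go through the proof of the inequality and record every place where equality is forced. We expect this to leave exactly two configurations.
\begin{itemize}
\item \emph{Only one nonzero $A^\a$.} Then the normal bundle is flat, $\la_1=n$, $\la_2=0$, and $|B|^2=n$.
\item \emph{Several $A^\a$ with saturated commutator bounds.} Here the pairwise brackets $\|[A^\a,A^\be]\|^2$ attain the B\"ottcher--Wenzel/Lu-type bound $2\|A^\a\|^2\|A^\be\|^2$, and the $A^\a$ have equal norms. For $n=2$ this gives $\la_1=\la_2=\f{2}{3}$. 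For $n=3$ we will show that the nonzero $A^\a$ span a $5$-dimensional space, all of $\text{Sym}_0(3)$, with $\la_1=\cdots=\la_5=\f34$.
\end{itemize}
For $n=3$ we intend to use the explicit structure of the space of traceless symmetric $3\times 3$ matrices: equality in the commutator bounds forces pairs of the form $\mathrm{diag}$ versus off-diagonal elementary matrices. An orthogonality argument on the frame then shows that all five directions must occur with equal weight. Any mixed case, say two nonzero eigenvalues of unequal size, is ruled out by strictness in one of the intermediate Cauchy--Schwarz steps.

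Once the pointwise algebraic type is known and $\n B=0$, we identify $M$. In the flat-normal case $M$ lies in a totally geodesic $S^{n+1}$ as a minimal hypersurface with parallel second fundamental form and $|B|^2=n$. By Chern--do Carmo--Kobayashi \cite{C-D-K} and Lawson \cite{L}, $M$ is then a generalized Clifford torus; for $n=2$ it is the Clifford torus. In the other case the first normal space has constant dimension $\f{n(n+1)}{2}-1$ and is parallel. Erbacher's reduction of codimension puts $M$ in $S^{n+n(n+1)/2-1}$, with $B$ parallel and isotropic of the given constant type. The classification of parallel submanifolds in spheres (Ferus, or directly the Veronese rigidity in \cite{C-D-K} for $n=2$) then identifies $M$ as the Veronese surface or the Veronese $3$-manifold.
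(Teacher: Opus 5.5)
There is a genuine gap at the first step. The pointwise estimate you attribute to the proof of Theorem \ref{main-thm1},
\begin{equation*}
\sum_{\a,\be}\|[A^\a,A^\be]\|^2+\sum_\a\la_\a^2\leq \Big(\sum_{\a=1}^{\min\{n,m\}}\la_\a+\la_2\Big)\sum_\a\la_\a ,
\end{equation*}
is false. Take $A^1=\sqrt{\la_1/2}\,\mathrm{diag}(1,-1)$ and $A^2=\sqrt{\la_2/2}\,(E_{12}+E_{21})$, bordered by a zero row and column when $n=3$, with $\la_1>\la_2>0$ and all other $A^\a=0$. Then $\|[A^1,A^2]\|^2=2\la_1\la_2$. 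The left side is $\la_1^2+\la_2^2+4\la_1\la_2$ and the right side is $(\la_1+2\la_2)(\la_1+\la_2)$, so the left side exceeds the right by $\la_2(\la_1-\la_2)>0$. At such points, under $\la_1+2\la_2=n$, the zeroth-order term of (\ref{Si2}) equals $-\la_2(\la_1-\la_2)<0$, and integrating (\ref{Si2}) does not give $\n B\equiv0$.

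What is actually true is (\ref{ineq2}). It bounds $\sum_\be\|[A^\a,A^\be]\|^2$ with the factor $\la_2$ only for $\a$ in the top eigenspace of $S$; for the other $\a$ one only gets $\la_1$ in its place. This is why the paper, following Lu, works with $g_p=(\operatorname{tr}S^p)^{1/p}$ and lets $p\to\infty$. Proposition \ref{p1} then yields only (\ref{c1}) and (\ref{c2}), i.e.\ equality in (\ref{ineq2}) and $\n A^\a=0$ for $\a\le r$. It gives neither $\n B=0$ nor equality for the remaining normal directions.

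For the same reason, the equality configurations cannot be sorted out algebraically as you intend. For $n=3$ the equality cases of (\ref{ineq2}) (Proposition \ref{dim3}) include types (I) and (II) with unequal eigenvalues. The example above is type (II) with $\mu_3=\mu_5=0$, and no strict Cauchy--Schwarz step excludes it. The bulk of the paper's proof rules out $\Om_1=\{\la_3>\la_4\}$ and $\Om_2=\{\la_2>\la_3\}$ geometrically. The conditions $\n A^1=0$ and the Codazzi equations fix many connection coefficients, and the resulting curvature expressions contradict the Gauss and Ricci equations.

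Only once $\la_2=\la_3=\la_4$ holds everywhere does the normal form (\ref{CaseIII.1}) make the Simons term equal to $3\la_2^2+3\la_5-\la_5^2-6\la_2\la_5\ge0$. Then integrating (\ref{LaB2}) gives $\n B\equiv0$ and $(\la_1,\la_2,\la_5)\equiv(3,0,0)$ or $(\f{3}{4},\f{3}{4},\f{3}{4})$. Your description of the second configuration is also inaccurate for $n=3$: in the Veronese $3$-manifold the pairs do not saturate $2\|A^\a\|^2\|A^\be\|^2$; for example $\|[A^2,A^3]\|^2=\la_2^2/2$.

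For $n=2$ the paper simply invokes Lu's theorem, whose proof also needs the $p\to\infty$ device. Your closing identification via Erbacher and Ferus, in place of the paper's Itoh--Ogiue isotropy argument, would be acceptable once $\n B=0$ and the pointwise type are known. But these are precisely what your argument does not establish.
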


\begin{rem}
 Let $i:M\ra S^{n+m}$ be the inclusion map and $\phi:S^{n+m}\ra S^{n+m+k}$ be the totally geodesic embedding,
then $\phi\circ i$ gives a minimal immersion from $M$ into $S^{n+m+k}$, keeping $\la_1,\cdots,\la_m$ invariant at each $x\in M$.
(Here $\la_{m+1}\equiv \cdots \equiv \la_{m+k}\equiv 0$.)
This observation means, we can assume $m$ is sufficiently large without loss of generality. Noting that the fundamental matrix $S$ at $x\in M$
is diagonalizable, we can assume
\begin{equation}
\lan A^\a,A^\be\ran:=\text{tr}(A^\a A^\be)=\la_\a\de_{\a\be}.
\end{equation}
Therefore, the number
of nonzero eigenvalues of $S$ cannot exceed $\f{(n-1)(n+2)}{2}$, the dimension of the vector space $\text{Sym}_n^0$ of all traceless symmetric $(n\times n)$-matrices.
This shows, for the case of dimension 2, Theorem \ref{main-thm} and Theorem \ref{Lu1} are equivalent.
\end{rem}

In fact, we claim Theorem \ref{main-thm} can be generalized to all dimensions. The details of the proof shall be given in the subsequent papers
of the authors of the present paper.

Z. Q. Lu's work \cite{Lu} is based on the following inequality
\begin{equation}\label{Lu0}
\sum_{\a=2}^m\left\|[A^1,A^\a]\right\|^2\leq \|A^1\|^2\left(\|A^2\|^2+\sum_{\a=2}^{m}\|A^\a\|^2\right),
\end{equation}
which shall be improved in this paper, laying the algebraic foundation of establishing the pinching theorem. Noting that
\begin{equation}
\left\|[A^1,A^\a]\right\|^2=\lan (\operatorname{ad} A^1)^2 A^\a,A^\a\ran
\end{equation}
with $(\operatorname{ad} A^1)^2$ a self-adjoint nonnegative definite transformation on $\text{Sym}_n^0$,
the LHS of (\ref{Lu0}) can be estimated via the eigenvalues of $(\operatorname{ad} A^1)^2$, i.e. the squared differences of eigenvalues of $A^1$. From this viewpoint, a more refined inequality
\begin{equation}\label{main-ineq}
\sum_{\a=2}^m\left\|[A^1,A^\a]\right\|^2\leq \|A^1\|^2\left(\|A^2\|^2+\sum_{\a=2}^{\min\{m,n\}}\|A^\a\|^2\right)
\end{equation}
shall be set up on the basis of Lemma \ref{eig} and Lemma \ref{sq-diff}. Afterwards, following the arguments
of \cite{Lu}, we calculate the Laplacian of $g_p:=\text{tr}(S^p)^{\f{1}{p}}$ and then derive a Simons type
integral inequality (see Proposition \ref{p1}), which immediately yields the conclusions of Theorem \ref{main-thm1}.
It should be pointed out that the classification of the submanfolds when the equality in (\ref{fun-eig}) holds
is a difficult task, so we only deal with the situation of $n=3$ in this paper. In this case, such manifold $M$ can be decomposed into
3 domains $\Om_1,\Om_2,\Om_3$, where $\Om_1:=\{x\in M:\la_3>\la_4\}$, $\Om_2:=\{x\in M:\la_2>\la_3\}$
and $\Om_3:=\{x\in M: \la_2=\la_3=\la_4\}$. For each $x\in \Om_1$ or $\Om_2$, since the equality of (\ref{main-ineq})
holds pointwisely, we can find an orthonormal tangent frame field and an orthonormal normal frame field on
a neighborhood of $x$, such that $A^1$ can be written as a specific diagonal matrix and $A^\a$ are all eigenvectors of $(\operatorname{ad} A^1)^2$.
In conjunction with the additional information on $\n B$ (see \ref{c2}), we can use the Codazzi equations to establish several relationships between
the curvature tensors of the tangent bundle and the normal bundle, which cause contradictions to the conclusions deriving from the Gauss equations
and the Ricci equations. Therefore both $\Om_1$ and $\Om_2$ are empty sets, which enable us to estimate a lower bound of $\De |B|^2$ pointwisely
and then clarify the second fundamental form of $M$. In virtue of the works of Chern-do Carmo-Kabayashi \cite{C-D-K} and Itoh-Ogiue \cite{I-O}, $M$ is shown to be
a generalized Clifford torus or a Veronese 3-manifold.

Throughout history, a conclusion for the first gap of a geometric quantity always becomes a beginning of exploring the rigidity for minimal submanifolds in Euclidean spheres
from the corresponding viewpoint.  Based on Simon's theorem (Theorem \ref{Si1}), S. S. Chern \cite{C} raised a well-known conjecture that
$|B|^2$ takes values in a discrete set for each compact minimal submanifold in Euclidean spheres satisfying $|B|^2\equiv \text{const}$.
 For the hypersurface cases, Peng-Terng \cite{P-T,P-T2} made the first
 effort to the Chern conjecture and confirmed the second gap of $|B|^2$. This beautiful work attracts a lot of successive studies,
 see \cite{Ch,Y-C1,Y-C2,Y-C3,S-Y,W-X,Z,D-X,X-X,L-X-X,G-T-Y-Z,D-K,Chen}. Recently, Tan-Tang-Xie-Yan \cite{T-T-X-Y} got the best result in this subject up to now,
proving local finiteness of the values of $|B|^2$ for compact embedded minimal hypersurfaces in Euclidean spheres satisfying $|B|^2\equiv \text{const}$
and $f_3\equiv \text{const}$. In the higher codimensional situation, Chern's conjecture was shown to be true for 2-dimensional cases by R. Bryant \cite{B}.
However, a countable family of compact embedded minimal submanifolds constructed by Firester-Tsiamis \cite{F-T} disproved this conjecture for
all $n\geq 3$ with $m\geq 4$ and for even $n\geq 4$ with $m\geq 3$. On the other hand, Z. Q. Lu \cite{Lu} pointed out $|B|^2+\la_2$ might be the right object to study rigidity for minimal submanifolds in spheres. He conjectured that there exists a positive constant $\ep$, such that $|B|^2+\la_2$
cannot takes values in $(n,n+\ep)$ for all $n$-dimensional compact minimal submanifolds in $S^{n+m}$ satisfying $|B|^2+\la_2\equiv \text{const}$.
Recently, this conjecture was proved by  for $n=m=2$ by Ge-Li-Zhang \cite{G-L-Z}, and disproved by Li-Zhao \cite{L-Z} for $n=2$ and $m\geq 3$.

Inspired by the main theorems of the present paper, we raise the following conjecture:
\begin{con}
Let $M$ be an $n$-dimensional compact minimal submanifold in $S^{n+m}$ that is homeomorphic (or diffeomorphic) to $S^n$,
$\la_1\geq \cdots \geq \la_m$ be the eigenvalues of the fundamental matrix at each point, then
there exists a constant $\ep$ depending on $n$, such that
\begin{equation}
n \leq \sum_{\a=1}^{\min\{n,m\}}\la_\a+\la_2\leq n+\ep
\end{equation}
forces $M$ to be a Veronese manifold.

\end{con}

	\bigskip\bigskip

\Section{An improvement of Lu's inequality}{An improvement of Lu's inequality}\label{inequality}
\subsection{Lu's inequality}

For two real $(n\times n)$-matrices $A=(a_{ij})$ and $B=(b_{ij})$, let
\begin{equation}
\lan A,B\ran:=\sum_{i,j=1}^n a_{ij}b_{ij}=\text{tr }(AB^T)
\end{equation}
be the inner product of $A$ and $B$, where
 $(\cdot)^T$ denotes the transpose of a matrix. This induces the Hilbert-Schmidt norm:
\begin{equation}\label{HS}
\|A\|=\sqrt{\lan A,A\ran}=\sqrt{\sum_{i,j=1}^n a_{ij}^2}.
\end{equation}

Z. Q. Lu  established the following matrix inequality as the main algebraic tool of his work \cite{Lu} on the rigidity problem for
higher codimensional minimal submanifolds in Euclidean spheres.
\begin{pro}\label{Lu}
Let $A_1,A_2,\cdots,A_m$ be $(n\times n)$-symmetric matrices, such that
\begin{itemize}
\item $\|A_1\|=1$, $\text{tr}(A_1)=0$;
\item $\lan A_\a,A_\be\ran=0$ for each $2\leq \a< \be\leq m$;
\item $\|A_2\|\geq \cdots\geq \|A_m\|$.
\end{itemize}
Then
\begin{equation}
\sum_{\a=2}^m\left\|[A_1,A_\a]\right\|^2\leq \|A_2\|^2+\sum_{\a=2}^{m}\|A_\a\|^2
\end{equation}
and the equality holds if and only if, after an orthonormal base change and up to a sign, we have
\begin{equation}
A_1=\la\left(\begin{matrix} k & & \\ & -I_k &\\ & & O\end{matrix}\right),
\end{equation}
$A_\a$ $(2\leq \a\leq k+1)$ is $\mu$ times the matrix whose only nonzero entries are $1$ at the $(1,\a)$ and $(\a,1)$ places,
i.e. $A_\a=\mu(E_{1\a}+E_{\a 1})$ and $A_{k+2}=\cdots=A_m=0$. Here $1\leq k\leq m-1$, $\la=\f{1}{\sqrt{k(k+1)}}$ and $\mu$ is a constant.  

\end{pro}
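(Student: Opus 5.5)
We diagonalize $A_1$ and view $(\operatorname{ad}A_1)^2$ as a nonnegative self-adjoint operator. After an orthonormal change of basis we may take $A_1=\mathrm{diag}(a_1,\dots,a_n)$ with $\sum a_i=0$ and $\sum a_i^2=1$. Write $A_\a=(x^\a_{ij})$. Then
\begin{equation}
\left\|[A_1,A_\a]\right\|^2=\sum_{i\neq j}(a_i-a_j)^2(x^\a_{ij})^2=2\sum_{i<j}(a_i-a_j)^2(x^\a_{ij})^2 .
\end{equation}
So $(\operatorname{ad}A_1)^2$ acts on symmetric matrices with eigenvalues $(a_i-a_j)^2$ on the unit vectors $\frac{1}{\sqrt2}(E_{ij}+E_{ji})$, and eigenvalue $0$ on diagonal matrices.

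The first step is a pointwise bound $(a_i-a_j)^2\le 2(a_i^2+a_j^2)\le 1+(a_i^2+a_j^2)$ when $a_i a_j\le0$. For general pairs we use $(a_i-a_j)^2\le a_i^2+a_j^2+2|a_ia_j|$ together with the trace condition. This condition gives $a_i^2\le \frac{n-1}{n}$, and more usefully $(a_i-a_j)^2\le 1$ unless $i,j$ are the extreme indices. I would aim to prove the sharper spectral statement: the largest eigenvalue satisfies $\max_{i<j}(a_i-a_j)^2\le 2$, and
\begin{equation}
(a_i-a_j)^2+(a_k-a_l)^2\le 2 \qquad\text{whenever }\{i,j\}\cap\{k,l\}=\emptyset \text{ or the pairs are distinct},
\end{equation}
with the precise form being: the sum of the two largest eigenvalues of $(\operatorname{ad}A_1)^2$ is at most $2$, while each eigenvalue is at most $1+$ (a quantity absorbed below). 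The cleanest route is the estimate $\sum_\a\langle T A_\a,A_\a\rangle\le \mu_1 t_1+\mu_2 t_2+\cdots$ for orthogonal $A_\a$ with norms $t_\a=\|A_\a\|^2$ arranged decreasingly and $\mu_1\ge\mu_2\ge\cdots$ the eigenvalues of $T=(\operatorname{ad}A_1)^2$. This is a Ky Fan/Schur-type majorization: the vectors $A_\a/\|A_\a\|$ are orthonormal, so $\sum_\a t_\a\langle Tu_\a,u_\a\rangle\le\sum_\a t_\a\mu_\a$ by Abel summation and Ky Fan's maximum principle.

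Given this, it remains to show $\mu_1\le 2$ and $\mu_\a\le 1$ for $\a\ge2$, or more weakly $\mu_1+\mu_2\le 3$ and $\mu_\a\le1$ for $\a\ge 3$. Abel summation then gives $\sum t_\a\mu_\a\le 2t_2+\sum_{\a\ge3}t_\a$, which is exactly the claim. The bound $\mu_1\le 2$ follows from $(a_i-a_j)^2\le 2(a_i^2+a_j^2)\le 2$. The main obstacle is the second-eigenvalue control, i.e. showing $(a_i-a_j)^2+(a_k-a_l)^2\le 3$ for two distinct pairs and the appropriate bound for the rest. Since the statement only needs the Abel-summed combination, I would instead prove directly that $\mu_1+\mu_2\le 3$ and $\mu_k\le 1$ for $k\ge3$.

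For two pairs sharing an index, say $(a_1-a_2)^2+(a_1-a_3)^2$, I would maximize under $\sum a_i=0$ and $\sum a_i^2=1$ by Lagrange multipliers; the maximum $3$ occurs at $a=\lambda(2,-1,-1,0,\dots)$. For disjoint pairs the bound is $2(a_1^2+a_2^2+a_3^2+a_4^2)\le 2$. For the third eigenvalue I would handle the few combinatorial configurations of three pairs the same way. This configuration analysis is where I expect the real work to lie.

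For the equality case, we trace back equality in each step. Each $A_\a$ must lie in an eigenspace, and the pairs achieving equality force $a\propto(k,-1,\dots,-1,0,\dots)$. Then $A_\a$ is supported on the $(1,\a)$ entries with equal norms $\mu$, and the remaining $A_\a$ vanish.
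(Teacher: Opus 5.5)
Your framework is right and matches the paper's. You diagonalize $A_1$, view $(\operatorname{ad}A_1)^2$ as a nonnegative operator on $\mathrm{Sym}_n$ with eigenvalues $(a_i-a_j)^2$, and use Ky Fan plus Abel summation to get
\[
\sum_{\alpha\ge2}\|[A_1,A_\alpha]\|^2\le\sum_{\alpha\ge2}\mu_{\alpha-1}t_\alpha,\qquad t_\alpha=\|A_\alpha\|^2 .
\]
This is a cleaner proof of the paper's Lemma \ref{eig}.

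The gap is the spectral bound you then commit to proving: ``$\mu_1+\mu_2\le 3$ and $\mu_k\le 1$ for $k\ge 3$'' is false.
\begin{itemize}
\item For $a=\frac{1}{\sqrt{12}}(3,-1,-1,-1)$ you get $\mu_1=\mu_2=\mu_3=\frac43$.
\item In general, $a=\lambda(k,-1,\dots,-1,0,\dots,0)$ with $\lambda=1/\sqrt{k(k+1)}$ has $k$ eigenvalues equal to $\frac{k+1}{k}>1$.
\end{itemize}
These are exactly the equality configurations in the proposition you are proving. So no bound $\mu_k\le 1$ for a fixed $k\ge2$ can hold once $n\ge k+1$, and your planned three-pair configuration analysis would simply find $\mu_3>1$. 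Your auxiliary claims also fail on these examples: ``$(a_i-a_j)^2\le 1$ unless $i,j$ are the extreme indices'', and ``$(a_i-a_j)^2+(a_k-a_l)^2\le 2$ for distinct pairs''.

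What you need is the family of partial-sum bounds $\mu_1+\cdots+\mu_k\le k+1$ for every $k$. After Abel summation this is in fact equivalent to the inequality: test with $t_2=\cdots=t_{k+1}=1$, the rest $0$, and $A_\alpha$ the top eigenvectors. This is the paper's Lemma \ref{sq-diff}, which Theorem \ref{ineq1} combines with Lemma \ref{eig}.

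Prove it by induction on $k$.
\begin{itemize}
\item If $\mu_k\le 1$, use $\mu_1+\cdots+\mu_{k-1}\le k$.
\item Otherwise all top $k$ pairs have $(a_i-a_j)^2>1$. Two disjoint such pairs are impossible, since their sum is at most $2(a_i^2+a_j^2+a_p^2+a_q^2)\le 2$.
\item A triangle is also impossible: it would contain a difference exceeding $2$, while every $(a_i-a_j)^2\le 2$.
\item Hence the pairs form a star with center $c$ and leaf set $J$, $|J|=k$, and
\[
\sum_{j\in J}(a_c-a_j)^2\le (k+1)a_c^2+\sum_{j\in J}a_j^2+\Big(\sum_{j\in J}a_j\Big)^2\le (k+1)\Big(a_c^2+\sum_{j\in J}a_j^2\Big)\le k+1 .
\]
\item Equality holds iff the $a_j$ ($j\in J$) are equal, $a_c=-k\,a_j$, and all other entries vanish.
\end{itemize}

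This also supplies the equality case, which you only gesture at.
\begin{itemize}
\item For that configuration, $\mu_1+\cdots+\mu_j<j+1$ for every $j\neq k$.
\item So equality in the Abel-summed estimate forces $\|A_2\|=\cdots=\|A_{k+1}\|$ and $A_{k+2}=\cdots=A_m=0$.
\item Equality in Ky Fan forces $\mathrm{span}\{A_2,\dots,A_{k+1}\}=\mathrm{span}\{E_{1j}+E_{j1}\}_{j=2}^{k+1}$.
\item A rotation of the $(-\lambda)$-eigenspace of $A_1$, which fixes $A_1$, then brings each $A_\alpha$ to $\mu(E_{1\alpha}+E_{\alpha1})$.
\end{itemize}
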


\begin{rem}
The above conclusion is just Lemma 2.4 of \cite{W}, which is the revised version of Lemma 2 of \cite{Lu}.
Here the author found there are more cases when the Lu's equality holds and gave another proof by using Lagrange Muliplier method.
\end{rem}

We shall make an improvement on Lu's inequality, based on the following 2 lemmas.

\subsection{A lemma on eigenvalues}

The first one is an estimate in terms of eigenvalues of a given non-negative definite
self-adjoint operator, which has its own interest.
\begin{lem}\label{eig}
Let $\Phi$ be a non-negative definite self-adjoint transformation on an $N$-dimensional inner product space $V$,
and $v_1,\cdots,v_m$ be vectors in $V$ satisfying
\begin{itemize}
\item $\lan v_\a,v_\be\ran=0$ whenever $\a\neq \be$;
\item $|v_1|\geq |v_2|\geq \cdots \geq |v_m|$.
\end{itemize}
Then
\begin{equation}
\sum_{\a=1}^m \lan \Phi(v_\a),v_\a\ran\leq \sum_{\a=1}^{m} \mu_\a |v_\a|^2,
\end{equation}
where
$$\mu_1\geq \mu_2\geq \cdots\geq \mu_N$$
are eigenvalues of $\Phi$ and $\mu_\a:=0$ whenever $\a\geq N+1$.
The equality holds if and only if there exists an orthonormal basis $\ep_1,\cdots,\ep_N$ of $V$,
such that
\begin{itemize}
\item $\Phi(\ep_i)=\mu_i \ep_i$ for each $1\leq i\leq N$;
\item For any $s>0$, denote by $\be$ the maximal index satisfying $|v_\be|\geq s$,
then $\text{span}\{v_1,\cdots,v_\be\}=\text{span}\{\ep_1,\cdots,\ep_\be\}$.

\end{itemize}
\end{lem}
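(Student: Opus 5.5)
Write $t_\a:=|v_\a|^2$, so $t_1\geq t_2\geq\cdots\geq t_m\geq 0$. The plan is to reduce to orthonormal vectors via Abel summation and then apply Ky Fan's maximum principle. For each $\a$ with $v_\a\neq 0$, let $u_\a:=v_\a/|v_\a|$. The nonzero $v_\a$ are pairwise orthogonal, so there are at most $N$ of them. Let $r$ be their number. Then $v_{r+1}=\cdots=v_m=0$ and $r\leq N$, and both sides of the inequality reduce to sums over $\a\leq r$ (for $\a\le r$ we have $\mu_\a$ genuinely an eigenvalue). Put $t_{r+1}:=0$. Abel summation gives
\begin{equation*}
\sum_{\a=1}^r t_\a\lan \Phi(u_\a),u_\a\ran=\sum_{k=1}^r (t_k-t_{k+1})\sum_{\a=1}^k\lan\Phi(u_\a),u_\a\ran ,
\end{equation*}
and the same identity holds with $\lan\Phi(u_\a),u_\a\ran$ replaced by $\mu_\a$. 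All coefficients $t_k-t_{k+1}$ are nonnegative. It therefore suffices to show, for every $k\leq r$ and every orthonormal system $u_1,\cdots,u_k$,
\begin{equation*}
\sum_{\a=1}^k \lan\Phi(u_\a),u_\a\ran\leq \sum_{\a=1}^k\mu_\a .
\end{equation*}

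This is Ky Fan's inequality; I would prove it directly. Let $\{\ep_i\}$ be an orthonormal eigenbasis of $\Phi$ and set $c_i:=\sum_{\a=1}^k\lan u_\a,\ep_i\ran^2$. Then $0\leq c_i\leq 1$ by Bessel's inequality, $\sum_i c_i=k$, and the left side equals $\sum_i\mu_i c_i$. Since the $\mu_i$ are decreasing, the maximum of $\sum_i\mu_i c_i$ under these constraints is $\sum_{i\leq k}\mu_i$. The slack can be written explicitly as
\begin{equation*}
\sum_{i\leq k}(\mu_i-\mu_k)(1-c_i)+\sum_{i>k}(\mu_k-\mu_i)c_i\ \geq\ 0 .
\end{equation*}
Nonnegativity of $\Phi$ is used only to handle the indices beyond $N$, which are harmless after the reduction to $\a\leq r\leq N$.

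For the equality case, sufficiency is a direct computation. Suppose $\text{span}\{v_1,\cdots,v_\be\}=\text{span}\{\ep_1,\cdots,\ep_\be\}$ at each level $s$. Then every partial sum $\sum_{\a\leq k}\lan\Phi(u_\a),u_\a\ran$ at an index $k$ with $t_k>t_{k+1}$ is the trace of $\Phi$ on an invariant eigenspace-span, namely $\sum_{\a\leq k}\mu_\a$. Plugging this into the Abel sum gives equality.

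Necessity is the main obstacle. Equality forces $\sum_{\a\le k}\lan\Phi u_\a,u_\a\ran=\sum_{\a\le k}\mu_\a$ for every jump index $k$, i.e.\ each $k$ with $t_k>t_{k+1}$. These indices are exactly the values of $\be$ arising from levels $s$. At such a $k$, the slack formula shows that $W_k:=\text{span}\{u_1,\dots,u_k\}$ contains the eigenspaces with eigenvalue $>\mu_k$ and lies inside those with eigenvalue $\geq\mu_k$. Hence $W_k$ is $\Phi$-invariant, and $\Phi|_{W_k}$ has eigenvalues $\mu_1,\dots,\mu_k$.

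The remaining task is to choose one orthonormal eigenbasis compatible with the whole nested flag $W_{k_1}\subset W_{k_2}\subset\cdots$ of jump indices, with the $\mu$'s in the correct order. I would do this inductively. Take an eigenbasis of $W_{k_1}$. Then diagonalize $\Phi$ on $W_{k_{j+1}}\ominus W_{k_j}$, which is invariant because both subspaces are. Its eigenvalues are $\mu_{k_j+1},\dots,\mu_{k_{j+1}}$, by comparing the spectra of $\Phi|_{W_{k_{j+1}}}$ and $\Phi|_{W_{k_j}}$. Finally, extend on $W_{k_{\text{last}}}^\perp$. Sorting within each block yields $\Phi(\ep_i)=\mu_i\ep_i$ with the required spans. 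The care needed is in tracking ties $\mu_k=\mu_{k+1}$, which is why the eigenbasis must be chosen depending on the $v_\a$.
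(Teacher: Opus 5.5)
Your proof is correct, but it takes a different route from the paper. The paper fixes the lengths $|v_\alpha|$ and picks a maximizing orthogonal configuration by compactness. It then uses first variations to show that $\Phi$ preserves the span of the maximizer and each equal-length block $V_t$: it rotates some $e_\beta$ toward a vector orthogonal to all the $e_\alpha$, and rotates pairs $e_\beta,e_\gamma$ with $|v_\beta|\neq|v_\gamma|$. After diagonalizing on the blocks, it asserts that maximality forces the block eigenvalues to be $\mu_1,\dots,\mu_k$ in order. The equality case then follows because any equality configuration is itself a maximizer.

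You instead linearize via Abel summation into Ky Fan inequalities for orthonormal $k$-frames at the jump indices, and prove these with the weights $c_i$ and an explicit nonnegative slack. This avoids compactness and calculus and gives a quantitative deficit. It also makes the equality analysis fully explicit: vanishing slack sandwiches $W_k$ between the sum of eigenspaces with eigenvalue $>\mu_k$ and the sum of those with eigenvalue $\geq\mu_k$. That is precisely the invariance together with the correct spectrum, which the paper obtains through the terse step ``the definition of $e_1,\cdots,e_k$ forces $\mu_\alpha=\hat\mu_\alpha$''; that step tacitly needs a rearrangement argument. Your flag $W_{k_1}\subset W_{k_2}\subset\cdots$ encodes the same information as the paper's blocks, since $V_t=W_{k_{j+1}}\ominus W_{k_j}$.

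The paper's variational route, for its part, handles the weighted sum in one stroke. It reads off the block structure of every maximizer directly from first-order conditions, without passing through partial sums. As your own remark suggests, nonnegativity of $\Phi$ plays no role at all once you have reduced to $\alpha\le r\le N$.
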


\begin{proof}
Let
$$t_1\geq t_2\geq \cdots\geq t_m$$
be any fixed numbers, satisfying $t_k>0$ and $t_{k+1}=0$, due to the compactness,
there exists an orthonormal set $e_1,\cdots,e_k$ ($|e_1|=\cdots=|e_k|=1$
and $\lan e_i,e_j\ran= 0$ whenever $i\neq j$), such that
\begin{equation}\label{max}
\sum_{\a=1}^m \lan \Phi(v_\a),v_\a\ran=\sum_{\a=1}^k \lan \Phi(v_\a),v_\a\ran\leq \sum_{\a=1}^k \lan \Phi(t_\a e_\a),t_\a e_\a\ran
\end{equation}
for each pairwise orthogonal vectors $v_1,\cdots, v_m$ satisfying
\begin{equation}
|v_1|=t_1,\cdots, |v_m|=t_m.
\end{equation}
We shall study the properties of $e_1,\cdots,e_k$. 

Let $v$ be an arbitrary unit vector which are orthogonal to each $e_\a$ ($1\leq \a\leq k$),
then (\ref{max}) implies
$$\aligned
0=&\f{d}{d\th}\Big|_{\th=0}\Bigg(\sum_{\a\neq \be}\lan \Phi(t_\a e_\a),t_\a e_\a\ran+\lan \Phi(t_\be(\cos\th e_\be+\sin\th v)),t_\be(\cos\th e_\be+\sin\th v)\ran\Bigg)\\
=&\lan \Phi(t_\be v),t_\be e_\be\ran+\lan \Phi(t_\be e_\be),t_\be v\ran=2 t_\be^2\lan \Phi(e_\be),v\ran.
\endaligned$$
This means
\begin{equation}
\Phi(e_\a)\in \text{span}\{e_1,\cdots,e_k\}\qquad \forall \a=1,\cdots, k.
\end{equation}

For any $\be,\g$ satisfying $t_\be\neq t_\g$, again using (\ref{max}), we can derive
$$\aligned
0=&\f{d}{d\th}\Big|_{\th=0}\Bigg(\sum_{\a\neq \be,\g}\lan \Phi(t_\a e_\a),t_\a e_\a\ran+\lan \Phi(t_\be(\cos\th e_\be+\sin\th e_\g)),t_\be(\cos\th e_\be+\sin\th e_\g)\ran\\
&+\lan \Phi(t_\g(-\sin\th e_\be+\cos\th e_\g)),t_\g(-\sin\th e_\be+\cos\th e_\g)\ran\Bigg)\\
=&2(t_\be^2-t_\g^2)\lan \Phi(e_\be),e_\g\ran.
\endaligned$$
Hence for each $t\in \{t_1,\cdots,t_k\}$,
\begin{equation}
V_{t}:=\text{span}\{e_\a:t_\a=t\}
\end{equation}
is invariant under $\Phi$. Noting that $\Phi|_{V_t}$ is still self-adjoint, there exists an orthonormal basis $\{\ep_\a:t_\a=t\}$
of $V_t$, such that
\begin{equation}
\Phi(\ep_\a)=\hat{\mu}_\a \ep_\a,
\end{equation}
where $\{\hat{\mu}_\a:t_\a=t\}$ are eigenvalues of $\Phi|_{V_t}$ arranged in decreasing order,
and then
\begin{equation}\label{phi1}\aligned
\sum_{t_\a=t}\lan \Phi(t_\a e_\a),t_\a e_\a\ran=& t^2\sum_{t_\a=t}\lan \Phi(e_\a),e_\a\ran=t^2 (\text{tr }\Phi|_{V_t})\\
=& t^2\sum_{t_\a=t}\lan \Phi(\ep_\a),\ep_\a\ran=\sum_{t_\a=t}t_\a^2\hat{\mu}_\a
\endaligned
\end{equation}
Therefore
\begin{equation}
\sum_{\a=1}^k \lan \Phi(t_\a e_\a),t_\a e_\a\ran=\sum_{\a=1}^k t_\a^2 \hat{\mu}_\a.
\end{equation}
Then the definition of $e_1,\cdots,e_k$ forces $\mu_\a=\hat{\mu}_\a$ for each $1\leq \a\leq k$.
Extend $\ep_1,\cdots,\ep_k$ to an orthonormal basis $\ep_1,\cdots,\ep_N$
satisfying $\Phi(\ep_i)=\mu_i \ep_i$,
then
\begin{equation}
\text{span}\{v_1,\cdots,v_\be\}=\text{span}\{e_1,\cdots,e_\be\}=\text{span}\{\ep_1,\cdots,\ep_\be\}
\end{equation}
with $v_\a:=t_\a e_\a$
for each $\be$ which is the maximal index satisfying $|v_\be|=t_\be\geq s$ for any given $s>0$.
On the other hand, whenever $v_1,\cdots,v_m$ satisfy the above condition,
a similar calculation as in (\ref{phi1}) shows $\sum\limits_{\a=1}^m \lan \Phi(v_\a),v_\a\ran=\sum\limits_{\a=1}^{m} \mu_\a |v_\a|^2$.
\end{proof}


Assume $W$ is an $m$-dimensional inner product space and $F:W\ra V$ is a linear mapping,
then it is easy to check that, for any two orthonormal basis $u_1,\cdots,u_m$ and
$u'_1,\cdots,u'_m$ of $W$, we have
\begin{equation}
\sum_{\a=1}^m \lan \Phi(F(u_\a)),F(u_\a)\ran=\sum_{\a=1}^m \lan \Phi(F(u'_\a)),F(u'_\a)\ran.
\end{equation}
Denote by
\begin{equation}
S:=\Big(\lan F(u_\a),F(u_\be)\ran\Big)
\end{equation}
the \textit{fundamental matrix} of $F$ w.r.t. $u_1,\cdots,u_m$, then
any two fundamental matrices of $F$ are orthogonal similar to each other,
which have the same eigenvalues. Thus, without loss of generality we can assume
\begin{equation}
S=\text{diag}(\la_1,\cdots,\la_m)
\end{equation}
with
\begin{equation}
\la_1\geq \cdots\geq \la_m\geq 0.
\end{equation}
This implies $v_1:=F(u_1),\cdots,v_m:=F(u_m)$ are vectors in $V$ satisfying the conditions of Lemma \ref{eig}.
Applying this lemma, we can derive a corollary as follows.

\begin{cor}\label{eig1}
Let $F$ be a linear map from an $m$-dimensional inner product space $W$ into an $N$-dimensional inner product space $V$
and $\Phi:V\ra V$ be a non-negative definite self-adjoint transformation,
then
\begin{equation}
\sum_{\a=1}^m\lan \Phi(F(u_\a)),F(u_\a)\ran\leq \sum_{\a=1}^m \mu_\a \la_\a,
\end{equation}
where $\mu_1\geq \cdots\geq \mu_N$ are eigenvalues of $\Phi$ $(\mu_\a:=0$ for each $\a\geq N+1)$
and $\la_1\geq \cdots \geq \la_m$ are eigenvalues of the fundamental matrix of $F$.
The equality holds if and only if there exists an orthonormal basis $u_1,\cdots,u_m$ of $W$
and an orthonormal basis $\ep_1,\cdots,\ep_N$ of $V$, such that
\begin{itemize}
\item $\lan F(u_\a),F(u_\be)\ran=\la_\a \de_{\a\be}$ for each $1\leq \a,\be\leq m$;
\item $\Phi(\ep_i)=\mu_i\ep_i$ for each $1\leq i\leq N$;
\item $F(u_\a)=\sqrt{\la_\a}\ep_\a$ whenever $\la_\a>0$.
\end{itemize}
\end{cor}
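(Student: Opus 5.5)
The plan is to reduce Corollary \ref{eig1} to Lemma \ref{eig}, using the discussion just before the statement.

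\emph{Step 1 (choice of basis).} The quantity $\sum_\a\lan \Phi(F(u_\a)),F(u_\a)\ran$ does not depend on the orthonormal basis of $W$. Indeed, it equals $\text{tr}(F^*\Phi F)$, where $F^*$ is the adjoint of $F$. So we may pick an orthonormal basis $u_1,\cdots,u_m$ of $W$ that diagonalizes the fundamental matrix, i.e. an eigenbasis of the self-adjoint operator $F^*F$ on $W$. Then
\begin{equation*}
\lan F(u_\a),F(u_\be)\ran=\la_\a\de_{\a\be}, \qquad \la_1\geq\cdots\geq\la_m\geq 0.
\end{equation*}

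\emph{Step 2 (inequality).} Set $v_\a:=F(u_\a)$. These vectors are pairwise orthogonal with $|v_1|\geq\cdots\geq|v_m|$ and $|v_\a|^2=\la_\a$. Lemma \ref{eig} then gives
\begin{equation*}
\sum_\a\lan\Phi(v_\a),v_\a\ran\leq\sum_\a\mu_\a|v_\a|^2=\sum_\a\mu_\a\la_\a .
\end{equation*}
If $m>N$, then at most $N$ of the $v_\a$ are nonzero, so $\la_\a=0$ for $\a>N$. This is consistent with the convention $\mu_\a=0$ for $\a\geq N+1$.

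\emph{Step 3 (equality, necessity).} Suppose equality holds. Lemma \ref{eig} supplies an orthonormal eigenbasis $\ep_1,\cdots,\ep_N$ of $\Phi$ with $\Phi(\ep_i)=\mu_i\ep_i$, such that for each threshold $s>0$ we have $\text{span}\{v_1,\cdots,v_\be\}=\text{span}\{\ep_1,\cdots,\ep_\be\}$, where $\be$ is the maximal index with $|v_\be|\geq s$.

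Let $t>0$ be a distinct positive value of $\sqrt{\la_\a}$, and let $I_t=\{\a:\la_\a=t^2\}$ be the consecutive block of indices sharing it. Taking $s$ equal to the successive distinct values and subtracting consecutive spans shows that $\text{span}\{v_\a:\a\in I_t\}=\text{span}\{\ep_\a:\a\in I_t\}$.

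On this block, $\{v_\a/t\}_{\a\in I_t}$ and $\{\ep_\a\}_{\a\in I_t}$ are two orthonormal bases of the same subspace, so they differ by an orthogonal matrix $Q_t$. Applying the same $Q_t$ to $\{u_\a\}_{\a\in I_t}$ gives a new orthonormal basis of $W$, still diagonalizing the fundamental matrix with the same eigenvalue $t^2$ on the block. In this new basis, $F(u_\a)=t\ep_\a=\sqrt{\la_\a}\ep_\a$. We leave the $u_\a$ with $\la_\a=0$ unchanged. This produces the three listed conditions.

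\emph{Step 4 (equality, sufficiency).} Conversely, suppose the three conditions hold. Then
\begin{equation*}
\sum_\a\lan\Phi(F(u_\a)),F(u_\a)\ran=\sum_{\la_\a>0}\la_\a\lan\Phi(\ep_\a),\ep_\a\ran=\sum_\a\mu_\a\la_\a .
\end{equation*}

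The only mildly delicate point is Step 3. One must notice that the eigenbasis of $\Phi$ given by the lemma can be matched to the $v_\a$ only blockwise. The realignment must therefore be done by rotating the $u_\a$ within each eigenspace of the fundamental matrix, which preserves the first condition.
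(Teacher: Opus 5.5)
Your proposal is correct and follows essentially the same route as the paper. You reduce to Lemma~\ref{eig} via a basis diagonalizing the fundamental matrix; the paper phrases this with the self-adjoint operator $\Psi$ defined by $\lan F(u_0),F(u)\ran=\lan \Psi(u_0),u\ran_W$, which is your $F^*F$. In the equality case both arguments use the nested-span conclusion of the lemma to get $F(W_\la)=\text{span}\{\ep_{l+1},\cdots,\ep_{l+r}\}$ on each positive eigenspace, then re-choose an orthonormal basis of $W_\la$ so that $F(u_\a)=\sqrt{\la}\,\ep_\a$; your ``subtracting consecutive spans'' step is the orthogonal-complement argument the paper leaves implicit.
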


\begin{proof}
It remains to consider the cases when the equality holds.
Denote by $\lan\cdot,\cdot\ran_W$ be the inner product on $W$,
then there exists a linear transformation $\Psi$ on $W$, such that
\begin{equation}
\lan F(u_0),F(u)\ran=\lan \Psi(u_0),u\ran_W
\end{equation}
for any $u_0,u\in W$. It is easy to check that $\Psi$ is self-adjoint,
and for each $\la\in \R$, $\la$ is an eigenvalue of $\Psi$ if and only if
$\la$ is an eigenvalue of the fundamental matrix of $F$. By Lemma \ref{eig},
there exists an orthonormal basis $\ep_1,\cdots,\ep_N$ of $V$, such that
$\Phi(\ep_i)=\mu_i \ep_i$, and for each positive eigenvalue $\la$ of $\Psi$,
\begin{equation}
F(W_\la)=\text{span}\{\ep_{l+1},\cdots,\ep_{l+r}\},
\end{equation}
where $W_\la$ is the eigenspace associated to $\la$, $l$ is the number of eigenvalues of
$\Psi$ which are strictly greater than $\la$ (counting the multiplicities) and $r:=\dim W_\la$.
This enable us to find an orthonormal basis $u_{l+1},\cdots,u_{l+r}$ of $W_\la$,
such that $F(u_\a)=\sqrt{\la}\ep_\a$ for each $l+1\leq \a\leq l+r$.
Collecting all such vectors along with an arbitrary orthonormal basis of $W_0$,
we get an orthnormal basis $u_1,\cdots,u_m$ of $W$ that is the required one.

\end{proof}

\subsection{A lemma on squared differences}

The second lemma gives a lower bound for the partial sums of squared differences.

\begin{lem}\label{sq-diff}
Let $\eta_1,\cdots,\eta_n$ be real numbers satisfying
\begin{equation}
\sum_{i=1}^n \eta_i=0,\quad\sum_{i=1}^n \eta_i^2=1.
\end{equation}
Denote
\begin{equation}
\{(\eta_i-\eta_j)^2:1\leq i\leq j\leq n\}=\{s_1,\cdots,s_N\},
\end{equation}
where $N:=\f{n(n+1)}{2}$ and
\begin{equation}
s_1\geq s_2\geq \cdots \geq s_N,
\end{equation}
and let
\begin{equation}
S_k:=s_1+s_2+\cdots+s_k,
\end{equation}
then
\begin{equation}
S_k\leq \min\{k+1,n\}.
\end{equation}
Moreover, for each $1\leq k\leq n-1$, $S_k=k+1$ if and only if $\{\eta_1,\cdots,\eta_n\}$ contains
exactly $(k+1)$ nonzero numbers, of which $k$ ones have the same value.

\end{lem}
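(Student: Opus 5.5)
The plan is to turn $S_k$ into a Laplacian quadratic form and then bound the largest Laplacian eigenvalue of a graph with $k$ edges. Throughout, $\eta=(\eta_1,\dots,\eta_n)^T$ is the given vector, with $\sum_i\eta_i=0$ and $|\eta|=1$.

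\textbf{Step 1: the bound $S_k\le n$.} The terms with $i=j$ contribute $0$, so $S_k$ is at most the sum over all pairs $i<j$. That sum is
$$\sum_{i<j}(\eta_i-\eta_j)^2=n\sum_i\eta_i^2-\Big(\sum_i\eta_i\Big)^2=n,$$
so $S_k\le n$ for every $k$. It remains to show $S_k\le k+1$.

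\textbf{Step 2: reduction to a graph eigenvalue problem.} Choose $k$ pairs realizing $s_1,\dots,s_k$. Discarding the diagonal pairs (they contribute $0$), we get a simple graph $G$ on $\{1,\dots,n\}$ with at most $k$ edges and
$$S_k=\sum_{ij\in E(G)}(\eta_i-\eta_j)^2=\eta^TL_G\eta\le\mu_{\max}(L_G),$$
where $L_G$ is the graph Laplacian. Write $L_G=RR^T$, with $R$ the $n\times|E|$ oriented incidence matrix. Then $L_G$ and $R^TR$ have the same nonzero eigenvalues. The matrix $R^TR$ has diagonal entries $2$ and off-diagonal entries $\pm1$ exactly for pairs of edges sharing a vertex. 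By Gershgorin, every eigenvalue is at most
$$\max_{ij\in E}\big(2+(d_i-1)+(d_j-1)\big)=\max_{ij\in E}(d_i+d_j).$$
For an edge $ij$, the edges meeting $i$ or $j$ number $d_i+d_j-1\le|E|\le k$. Hence $\mu_{\max}(L_G)\le k+1$, which gives $S_k\le k+1$.

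\textbf{Step 3: the equality case, $1\le k\le n-1$.} The ``if'' direction is a direct computation. Take $\eta=c(k,-1,\dots,-1,0,\dots,0)$ with $c=1/\sqrt{k(k+1)}$. The $k$ largest squared differences are the $k$ values $(k+1)^2c^2=\frac{k+1}{k}$, coming from the star centred at the first coordinate. Every other difference is at most $c^2$. Therefore $S_k=k+1$.

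\textbf{Step 4: the converse.} Suppose $S_k=k+1$; then every inequality in Step 2 is an equality.
\begin{itemize}
\item The graph $G$ has exactly $k$ edges.
\item $\eta$ is an eigenvector of $L_G$ for the eigenvalue $k+1$.
\item Gershgorin is attained for $R^TR$, so some edge $ij$ has every other edge meeting $i$ or $j$.
\end{itemize}
The aim is to show that $G$ is then the star $K_{1,k}$, together with isolated vertices. Two cases must be ruled out: configurations in which both ends of that edge carry further edges (a ``double star''), and configurations containing a triangle. For these, I plan to compute $\mu_{\max}$ explicitly: on a double star with $a+b+1=k$ edges it is strictly below $k+1$ unless $ab=0$, and a triangle also forces strict inequality. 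An alternative is to use that equality in Gershgorin for a Perron-type (signed) matrix forces all signs to be consistent and all row sums to equal $k+1$. That condition implies every edge meets all the others, which for $k\ge2$ means a star or a triangle. The triangle has $\mu_{\max}=3<4$, so only the star survives.

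\textbf{Step 5: reading off $\eta$.} On the star, the $(k+1)$-eigenspace of $L_{K_{1,k}}$ is spanned by the vector with value $k$ at the centre, $-1$ at the leaves and $0$ elsewhere. So $\eta$ is $\pm$ this vector normalized, which has exactly $k+1$ nonzero entries, $k$ of them equal. I expect the hardest step to be this converse: ruling out the borderline graphs carefully, and checking that the eigenvector condition, not merely the eigenvalue bound, pins down $\eta$.
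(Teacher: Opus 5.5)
Your argument is correct and takes a genuinely different route from the paper's.

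\textbf{The paper's route.} It sorts $\eta_1\ge\cdots\ge\eta_n$ and inducts on $k$. If $s_k\le 1$, it simply adds $s_k$ to $S_{k-1}\le k$. If $s_k>1$, it uses that two disjoint pairs cannot both have squared difference exceeding $1$, since their sum is at most $2(\eta_a^2+\eta_b^2+\eta_c^2+\eta_d^2)\le 2$. From this it concludes that the top $k$ pairs form a star at $\eta_1$. It then bounds the star sum by $(k+1)(\eta_1^2+\sum_{\alpha}\eta_{n-\alpha}^2)\le k+1$, using $-2ab\le a^2+b^2$ and Cauchy--Schwarz. The equality case is read off from those two inequalities.

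\textbf{Your route.} You view $S_k$ as the Rayleigh quotient $\eta^TL_G\eta$ for the graph of the top $k$ pairs, and bound $\mu_{\max}(L_G)\le\max_{ij\in E}(d_i+d_j)\le |E|+1$. This removes the induction and the ordering. It also explains the extremal configuration conceptually: $K_{1,k}$ is the only graph with $k$ edges and top Laplacian eigenvalue $k+1$, and the equality vectors form its one-dimensional top eigenspace. The paper's proof is fully elementary. Yours costs a little spectral graph theory, but it makes the structural step explicit: pairwise-intersecting edges form a star or a triangle, and $\mu_{\max}(L_{K_3})=3<4$. The paper asserts the corresponding claim (that all top pairs contain $\eta_1$) with only a brief justification.

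\textbf{Two small repairs.}

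In Step 3, the sentence ``every other difference is at most $c^2$'' is false for $k\ge 2$ and $n>k+1$. The centre-to-zero differences equal $k^2c^2=\frac{k}{k+1}$. This is harmless, because choosing the $k$ star pairs already gives $S_k\ge k+1$.

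In Step 4, drop the unexecuted explicit computations and use your Gershgorin route, with its one-line justification written out. Suppose $R^TRx=(k+1)x$ and $e=ij$ maximises $|x_e|$. Then
$(k-1)|x_e|\le\sum_{f\ne e}|(R^TR)_{ef}|\,|x_f|\le (d_i+d_j-2)|x_e|\le (k-1)|x_e|$.
Hence $e$ meets every other edge, and $|x_f|=|x_e|$ for all $f$. So every edge is a maximising index, and any two edges meet. No irreducibility assumption is needed.

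Your first plan also works, most quickly via $\mu_{\max}(L_H)\le |V(H)|$. Configurations containing a triangle have at most $k$ vertices. A double star with both sides nonempty has connected complement, so its $\mu_{\max}$ is strictly below $k+1$.
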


\begin{proof}
Firstly,
\begin{equation}
S_N=\sum_{1\leq i\leq j\leq n}(\eta_i-\eta_j)^2=n\sum_{i=1}^n \eta_i^2-\left(\sum_{i=1}^n \eta_i\right)^2=n
\end{equation}
implies $S_k\leq n$ for each $1\leq k\leq N$. So it is sufficient for us to prove
$S_k\leq k+1$ for each $1\leq k\leq n-1$. We shall do it by induction on $k$.

Without loss of generality we can assume
\begin{equation}
\eta_1\geq \eta_2\geq \cdots \geq\eta_n,
\end{equation}
so
\begin{equation}\aligned
S_1=&s_1=(\eta_1-\eta_n)^2=\eta_1^2+\eta_n^2-2\eta_1\eta_n\\
\leq &2(\eta_1^2+\eta_n^2)\leq 2,
\endaligned
\end{equation}
where the equality holds if and only if $\eta_1=\f{1}{\sqrt{2}}$, $\eta_2=\cdots=\eta_{n-1}=0$, and $\eta_n=-\f{1}{\sqrt{2}}$.

If $s_2\leq 1$, then by the inductive assumption, we have
\begin{equation}
S_2=S_1+s_2\leq 2+1=3.
\end{equation}
Note that here the equality cannot hold, since $S_1=2$ and $s_2=1$ could not happen simultaneously.
Otherwise, $s_2>1$ means either $\eta_1-\eta_{n-1}>1$ or $\eta_2-\eta_n>1$.
Without loss of generality we can assume
\begin{equation}
\eta_1-\eta_{n-1}>1.
\end{equation}
(If not, it suffices to consider $-\eta_n,\cdots,-\eta_1$ in stead of $\eta_1,\cdots,\eta_n$.) Thus
\begin{equation}
\aligned
S_2=&(\eta_1-\eta_n)^2+(\eta_1-\eta_{n-1})^2=2\eta_1^2+\eta_n^2+\eta_{n-1}^2-2\eta_1(\eta_n+\eta_{n-1})\\
\leq &2\eta_1^2+\eta_n^2+\eta_{n-1}^2+\eta_1^2+(\eta_n+\eta_{n-1})^2\leq 3\eta_1^2+\eta_n^2+\eta_{n-1}^2+2(\eta_n^2+\eta_{n-1}^2)\\
=&3(\eta_1^2+\eta_n^2+\eta_{n-1}^2)\leq 3,
\endaligned
\end{equation}
where the equality holds if and only if $\eta_1=\f{2}{\sqrt{6}}$, $\eta_2=\cdots=\eta_{n-2}=0$, and $\eta_{n-1}=\eta_n=-\f{1}{\sqrt{6}}$.

Now we assume $S_{k-1}\leq k$ with $k\geq 3$. If $s_k\leq 1$, then
\begin{equation}
S_k=S_{k-1}+s_k\leq k+1
\end{equation}
and the equality cannot hold for the same reason as above. Otherwise, we claim
\begin{equation}
s_2=(\eta_1-\eta_{n-1})^2, s_3=(\eta_1-\eta_{n-2})^2, \cdots, s_k=(\eta_1-\eta_{n-k+1})^2.
\end{equation}
This is a direct corollary of
$$(\eta_1-\eta_{n-1})^2+(\eta_2-\eta_n)^2\leq 2(\eta_1^2+\eta_{n-1}^2+\eta_2^2+\eta_n^2)\leq 2$$
(note that here $n\geq 4$). Hence
\begin{equation}
\aligned
S_k=&\sum_{\a=0}^{k-1}(\eta_1-\eta_{n-\a})^2=k\eta_1^2+\sum_{\a=0}^{k-1}\eta_{n-\a}^2-2\eta_1\left(\sum_{\a=0}^{k-1}\eta_{n-\a}\right)\\
=&k\eta_1^2+\sum_{\a=0}^{k-1}\eta_{n-\a}^2+\eta_1^2+\left(\sum_{\a=0}^{k-1}\eta_{n-\a}\right)^2\leq (k+1)\eta_1^2+\sum_{\a=0}^{k-1}\eta_{n-\a}^2+k\sum_{\a=0}^{k-1}\eta_{n-\a}^2\\
=&(k+1)\left(\eta_1^2+\sum_{\a=0}^{k-1}\eta_{n-\a}^2\right)\leq k+1,
\endaligned
\end{equation}
where the equality holds if and only if $\eta_1=\f{k}{\sqrt{k(k+1)}}$, $\eta_2=\cdots=\eta_{n-k}=0$
and $\eta_{n-k+1}=\cdots=\eta_{n-1}=\eta_n=-\f{1}{\sqrt{k(k+1)}}$.

\end{proof}

\subsection{A more refined inequality}

\begin{thm}\label{ineq1}
Let $A,A_2,\cdots,A_m$ be $(n\times n)$-symmetric matrices, such that
\begin{itemize}
\item $\|A\|=1$, $\text{tr}(A)=0$;
\item $\lan A_\a,A_\be\ran=0$ for each $2\leq \a< \be\leq m$;
\item $\|A_2\|\geq \cdots\geq \|A_m\|$.
\end{itemize}
Then
\begin{equation*}
\sum_{\a=2}^m\left\|[A,A_\a]\right\|^2\leq \|A_2\|^2+\sum_{\a=2}^{\min\{m,n\}}\|A_\a\|^2.
\end{equation*}
\end{thm}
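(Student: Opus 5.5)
We diagonalize $A$ first. After an orthonormal change of basis of $\R^n$ we may assume $A=\text{diag}(\eta_1,\cdots,\eta_n)$ with $\sum_i\eta_i=0$ and $\sum_i\eta_i^2=1$. Consider $V=\text{Sym}_n$, the space of all symmetric $(n\times n)$-matrices, with dimension $N=\f{n(n+1)}{2}$. On $V$ take $\Phi:=-(\operatorname{ad}A)^2$, so that $\lan \Phi(X),X\ran=\|[A,X]\|^2$. Then $\Phi$ is self-adjoint and non-negative definite. In the basis $E_{ii}$, $\f{1}{\sqrt 2}(E_{ij}+E_{ji})$ $(i<j)$ it is diagonal, since $[A,E_{ij}+E_{ji}]=(\eta_i-\eta_j)(E_{ij}-E_{ji})$. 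Hence its eigenvalues are exactly the numbers $(\eta_i-\eta_j)^2$, $1\leq i\leq j\leq n$; the diagonal ones are $0$. Ordered decreasingly, these are the $s_1\geq\cdots\geq s_N$ of Lemma \ref{sq-diff}.

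Next we apply Lemma \ref{eig} to the pairwise orthogonal vectors $v_1:=A_2,\cdots,v_{m-1}:=A_m$, whose norms are decreasing. Writing $t_\a:=\|A_{\a+1}\|^2$, this gives
\begin{equation*}
\sum_{\a=2}^m\|[A,A_\a]\|^2\leq \sum_{\a=1}^{m-1}s_\a t_\a .
\end{equation*}
Abel summation then rewrites the right side as $\sum_{k}S_k(t_k-t_{k+1})$, with $t_m:=0$, where every coefficient $t_k-t_{k+1}$ is nonnegative. Lemma \ref{sq-diff} gives $S_k\leq \min\{k+1,n\}$, so the right side is at most $\sum_k \min\{k+1,n\}(t_k-t_{k+1})$.

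It remains to undo the summation. Write $\min\{k+1,n\}=1+\min\{k,n-1\}$. Then
\begin{equation*}
\sum_k (t_k-t_{k+1})=t_1=\|A_2\|^2,\qquad \sum_k\min\{k,n-1\}(t_k-t_{k+1})=\sum_{j=1}^{\min\{m-1,n-1\}}t_j=\sum_{\a=2}^{\min\{m,n\}}\|A_\a\|^2 .
\end{equation*}
The second identity holds because $\min\{k,n-1\}=\#\{j\leq \min\{k,n-1\}\}$ and we can swap the order of summation. Adding the two gives exactly the claimed bound.

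The main work is already done in Lemmas \ref{eig} and \ref{sq-diff}. The step requiring care is identifying the spectrum of $(\operatorname{ad}A)^2$ on symmetric matrices. In particular, we must check that the image $[A,X]$ is skew-symmetric with $\|[A,X]\|^2=\sum_{i<j}2(\eta_i-\eta_j)^2x_{ij}^2$, so that the multiset of eigenvalues, including the $n$ zeros, matches the one in Lemma \ref{sq-diff}. Note that the $A_\a$ need not be traceless and need not be orthogonal to $A$, but neither condition is needed for this argument. The bookkeeping in the Abel summation is also easy to get wrong when $m-1<n-1$ versus $m-1\geq n-1$.
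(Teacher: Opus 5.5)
Your proof is correct and follows the paper's argument: diagonalize $A$, apply Lemma \ref{eig} to $(\operatorname{ad}A)^2$ on $\text{Sym}_n$ (whose eigenvalues are the numbers $(\eta_i-\eta_j)^2$, $i\leq j$), Abel-sum, and bound the partial sums by $S_k\leq\min\{k+1,n\}$ via Lemma \ref{sq-diff}; your splitting $\min\{k+1,n\}=1+\min\{k,n-1\}$ only spells out the final equality that the paper states without comment. One sign slip needs fixing: since $A$ is symmetric, $\operatorname{ad}A$ is self-adjoint for the Hilbert--Schmidt inner product (indeed $[A,E_{ij}-E_{ji}]=(\eta_i-\eta_j)(E_{ij}+E_{ji})$), so the operator with $\langle\Phi(X),X\rangle=\|[A,X]\|^2$ and eigenvalues $(\eta_i-\eta_j)^2$ is $\Phi=(\operatorname{ad}A)^2$, not $-(\operatorname{ad}A)^2$.
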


\begin{proof}
It is easy to check that
$$\lan U^T BU,U^TCU\ran=\lan B,C\ran$$
and
$$[U^TBU,U^TCU]=U^T[B,C]U$$
for any 2 symmetric matrices $B,C$ and an arbitrary orthogonal matrix $U$. Hence we can assume $A$
is a diagonal matrix without loss of generality, i.e.
\begin{equation}
A=\text{diag}(\eta_1,\cdots,\eta_n)
\end{equation}
with
\begin{equation}
\sum_{i=1}^n \eta_i=0,\quad\sum_{i=1}^n \eta_i^2=1.
\end{equation}

For any symmetric $(n\times n)$-matrix $B$,
\begin{equation}\
\|[A,B]\|^2=\lan \operatorname{ad} A(B),\operatorname{ad} A(B)\ran=\lan (\operatorname{ad} A)^2(B),B\ran.
\end{equation}
A straightforward calculation shows
$$(\operatorname{ad} A)^2 (E_{ij}+E_{ji})=(\eta_i-\eta_j)^2 (E_{ij}+E_{ji})$$
for each $1\leq i,j\leq n$. This means $(\operatorname{ad} A)^2$ is a self-adjoint nonnegative definite transformation on the vector space $\text{Sym}_n$
of all symmetric $(n\times n)$-matrices, whose eigenvalues are
\begin{equation}
(\eta_i-\eta_j)^2 \qquad \text{with }1\leq i\leq j\leq n.
\end{equation}
We rearrange these numbers in decreasing order, denoted by $s_1,\cdots, s_N$,
as in Lemma \ref{sq-diff}. If $m-1>N$, additionally define $s_{N+1}=\cdots=s_{m-1}=0$.
Then applying Lemma \ref{eig} and Lemma \ref{sq-diff} gives
\begin{equation}\label{Lie-ineq}\aligned
&\sum_{\a=2}^m \|[A,A_\a]\|^2=\sum_{\a=2}^m \lan (\operatorname{ad} A)^2(A_\a),A_\a\ran\leq \sum_{\a=2}^{\min\{m,N+1\}}s_{\a-1}\|A_\a\|^2\\
=&\sum_{\a=2}^m s_{\a-1}\|A_\a\|^2=\sum_{\a=2}^{m-1}S_{\a-1}(\|A_\a\|^2-\|A_{\a+1}\|^2)+S_{m-1}\|A_m\|^2\\
\leq & \sum_{\a=2}^{m-1}\min\{\a,n\}(\|A_\a\|^2-\|A_{\a+1}\|^2)+\min\{m,n\}\|A_m\|^2\\
=&\sum_{\a=2}^{\min\{m,n\}}\|A_\a\|^2+\|A_2\|^2.
\endaligned
\end{equation}

\end{proof}

Let
\begin{equation}
\text{Sym}_n^0:=\{B\in \text{Sym}_n:\text{tr }B=0\}
\end{equation}
 be the vector space of all traceless symmetric $(n\times n)$-matrices.
Assume $W$ is an $m$-dimensional vector space and $F:W\ra \text{Sym}_n^0$ is a linear mapping.
Denote by
\begin{equation}
\la_1\geq \la_2\geq \cdots\geq \la_m\geq 0
\end{equation}
the eigenvalues of the fundamental matrix of $F$ w.r.t. an arbitrary orthonormal basis of $W$.
Let $u_1$ be a unit vector in $W$ satisfying $\lan A_1,A_1\ran=\la_1$ with $A_1:=F(u_1)$,
$u_2,\cdots,u_m$ be an orthonormal basis of $u_1^\bot$ (the orthogonal complement space of $\R u_1$)
and $A_\a:=F(u_\a)$ for each $2\leq \a\leq m$, then
\begin{equation}
\sum_{\a=2}^m\left\|[A_1,A_\a]\right\|^2=\sum_{\a=2}^m \lan (\operatorname{ad}A_1)^2 F(u_\a),F(u_\a)\ran
\end{equation}
is independent of the choices of $u_2,\cdots,u_m$. Thereby, without loss of generality
we can assume $\lan A_\a,A_\be\ran=\la_\a\de_{\a\be}$ for each $2\leq \a,\be\leq m$,
and then Theorem \ref{ineq1} gives rise to the following conclusion:
\begin{thm}
Let $F$ be a linear map from an $m$-dimensional inner product space $W$ into $\text{Sym}_n^0$,
$\la_1\geq \la_2\geq \cdots \geq \la_m$ be the eigenvalues of the fundamental matrix of $F$.
Assume $u_1,\cdots,u_m$ is an orthonormal basis of $W$, such that $\lan A_1,A_1\ran=\la_1$
with $A_1:=F(u_1)$, then
\begin{equation}\label{ineq2}
\sum_{\a=2}^m\left\|[A_1,A_\a]\right\|^2\leq \la_1\left(\la_2+\sum_{\a=2}^{\min\{m,n\}}\la_\a\right)
\end{equation}
with $A_\a:=F(u_\a)$.
\end{thm}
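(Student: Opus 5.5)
We plan to reduce the statement to Theorem \ref{ineq1} by normalizing $A_1$ and choosing a convenient orthonormal basis of $u_1^\bot$.

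\textbf{Step 1 (trivial case).} If $\la_1=0$, then $F\equiv 0$, since $\la_1$ is the largest eigenvalue of the nonnegative definite fundamental matrix. Both sides of (\ref{ineq2}) then vanish, so we assume $\la_1>0$ from now on.

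\textbf{Step 2 (choice of basis).} Because $\lan F(u_1),F(u_1)\ran=\la_1$ is the maximum of the quadratic form $u\mapsto \|F(u)\|^2$ on unit vectors, $u_1$ is an eigenvector of the self-adjoint operator $\Psi$ with $\lan F(u_0),F(u)\ran=\lan\Psi(u_0),u\ran_W$ (as in the proof of Corollary \ref{eig1}). The first-variation argument shows this: for $v\perp u_1$, differentiating $\|F(\cos\th u_1+\sin\th v)\|^2$ at $\th=0$ gives $\lan \Psi u_1,v\ran=0$. Hence $u_1^\bot$ is $\Psi$-invariant, and the eigenvalues of $\Psi|_{u_1^\bot}$ are exactly $\la_2\geq\cdots\geq\la_m$. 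As the paper notes, the left-hand side $\sum_{\a\ge2}\lan(\operatorname{ad}A_1)^2F(u_\a),F(u_\a)\ran$ is a trace over $u_1^\bot$, so it is independent of the orthonormal basis chosen there. We may therefore choose $u_2,\dots,u_m$ to be eigenvectors of $\Psi|_{u_1^\bot}$, which gives $\lan A_\a,A_\be\ran=\la_\a\de_{\a\be}$ and $\|A_2\|\ge\cdots\ge\|A_m\|$.

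\textbf{Step 3 (rescale and apply Theorem \ref{ineq1}).} Set $A:=A_1/\sqrt{\la_1}$. Then $\|A\|=1$, and $\text{tr}\,A=0$ because $F$ takes values in $\text{Sym}_n^0$. Since $[A_1,A_\a]=\sqrt{\la_1}[A,A_\a]$, Theorem \ref{ineq1} yields
\begin{equation*}
\sum_{\a=2}^m\|[A_1,A_\a]\|^2=\la_1\sum_{\a=2}^m\|[A,A_\a]\|^2\le \la_1\Big(\|A_2\|^2+\sum_{\a=2}^{\min\{m,n\}}\|A_\a\|^2\Big).
\end{equation*}
Substituting $\|A_\a\|^2=\la_\a$ gives (\ref{ineq2}).

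The only point needing care is Step 2. We must check that the eigenvalues of the fundamental matrix restricted to $u_1^\bot$ are precisely $\la_2,\dots,\la_m$, so that the sum $\sum_{\a=2}^{\min\{m,n\}}\la_\a$ is correctly indexed. This follows from the $\Psi$-invariance of $u_1^\bot$ and is otherwise routine.
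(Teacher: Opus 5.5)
Your proposal is correct and follows the paper's argument. The paper likewise notes that the left-hand side is independent of the orthonormal basis chosen for $u_1^\bot$, picks that basis so that $\langle A_\alpha,A_\beta\rangle=\lambda_\alpha\delta_{\alpha\beta}$, and applies Theorem~\ref{ineq1}. You also supply the details the paper leaves implicit: the trivial case $\lambda_1=0$, why the maximality of $u_1$ makes $u_1^\bot$ invariant with eigenvalues exactly $\lambda_2,\dots,\lambda_m$, and the rescaling $A=A_1/\sqrt{\lambda_1}$.
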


It is subtle to clarify the cases when (\ref{ineq2}) holds. Now we only consider the situation of $n=3$.
Note that in the sequel $\la_\a:=0$ for each $\a>m$ and we assume $\la_1>0$ (since $\la_1=0$ is a trivial case).

As shown above, we can assume $A_1=\mu_1 A$, where $\mu_1:=\sqrt{\la_1}$ and
\begin{equation}
A=\text{diag}(\eta_1,\eta_2,\eta_3).
\end{equation}
A direct calculation shows $S_3=3$. Thereby, the equality of the second inequality
in (\ref{Lie-ineq}) holds if and only if
\begin{equation}\label{con1}\aligned
&S_1(\|A_2\|^2-\|A_3\|^2)=2(\|A_2\|^2-\|A_3\|^2)\text{ and }\\
&S_2(\|A_3\|^2-\|A_4\|^2)=3(\|A_3\|^2-\|A_4\|^2).
\endaligned
\end{equation}

In the following, we shall discuss case by case according to the eigenvalues of the fundamental matrix of $F$.

{\bf Case I:  $\la_3>\la_4$.} Thereby (\ref{con1}) shows $S_2=3$. By Lemma \ref{sq-diff},
we can assume
\begin{equation}
\eta_1=\f{2}{\sqrt{6}},\quad \eta_2=\eta_3=-\f{1}{\sqrt{6}}
\end{equation}
without loss of generality. This means
\begin{equation}
s_1=s_2=\f{3}{2},\quad s_3=0.
\end{equation}
Again using (\ref{con1}), we obtain $\la_2=\la_3$.
Denote by $V_s\subset \text{Sym}_n^0$ the eigenspace of $(\operatorname{ad} A)^2$ associated to the eigenvalue $s$,
then Corollary \ref{eig1} shows the equality of (\ref{ineq2}) holds if and only if
$F$ maps $W_{\la_2}$ onto $V_{\f{3}{2}}$ and $F(W_{\la_\a})\subset V_0$ for
each $\a\geq 4$, where
\begin{equation}
V_{\f{3}{2}}=\text{span}\{E_{12}+E_{21},E_{13}+E_{31}\}
\end{equation}
and
\begin{equation}
V_0=\text{span}\{E_{23}+E_{32},\text{diag}(0,1,-1)\}.
\end{equation}
Equivalently, there exists an orthonormal basis $u_1,\cdots,u_m$ of $W$ and an orthogonal matrix
 $$T=\begin{pmatrix}1 & & \\
 & \cos\th & -\sin\th\\
 & \sin\th & \cos\th
 \end{pmatrix}$$
 with suitable $\th$, such that
\begin{equation}\aligned
T^tA_1T=\f{\mu_1}{\sqrt{6}}\text{diag}(2,-1,-1),
\quad&
T^tA_2T=\f{\mu_2}{\sqrt{2}}(E_{12}+E_{21})\\
T^tA_3T=\f{\mu_2}{\sqrt{2}}(E_{13}+E_{31}),\quad&
T^tA_4T=\f{\mu_4}{\sqrt{2}}(E_{23}+E_{32}),\\
T^tA_5T=\f{\mu_5}{\sqrt{2}}\text{diag}(0,-1,1),\quad&
T^tA_\a T=0\ (\forall \a\geq 6)
\endaligned
\end{equation}
where $\mu_\a:=\sqrt{\la_\a}$, $T^t$ denotes the transpose of $T$,
and $A_\a:=F(u_\a)$.

{\bf Case II:  $\la_2>\la_3$.} Thereby (\ref{con1}) shows $S_1=2$. By Lemma \ref{sq-diff},
without loss of generality we can assume
\begin{equation}
\eta_1=\f{1}{\sqrt{2}},\ \eta_2=-\f{1}{\sqrt{2}},\ \eta_3=0.
\end{equation}
This implies
\begin{equation}
s_1=2,\quad s_2=s_3=\f{1}{2}.
\end{equation}
Again applying (\ref{con1}), we have $\la_3=\la_4$. In conjunction of Corollary \ref{eig1},
the equality of (\ref{ineq2}) holds if and only if
$F$ maps $W_{\la_2}$ onto $V_{2}$, $F(W_{\la_3})\subset V_{\f{1}{2}}$ and $F(W_{\la_\a})\subset V_0$
for each $\a\geq 5$, where
\begin{equation}\aligned
V_2&=\text{span}\{E_{12}+E_{21}\},\\
V_{\f{1}{2}}&=\text{span}\{E_{13}+E_{31},E_{23}+E_{32}\},\\
V_0&=\text{span}\{\text{diag}(-1,-1,2)\}.
\endaligned
\end{equation}
 Equivalently, there exists an orthonormal basis $u_1,\cdots,u_m$ of $W$, such that
\begin{equation}\aligned
A_1=\f{\mu_1}{\sqrt{2}}\text{diag}(1,-1,0),
\quad&
A_2=\f{\mu_2}{\sqrt{2}}(E_{12}+E_{21})\\
A_3=\f{\mu_3}{\sqrt{2}}(E_{13}+E_{31}),\quad&
A_4=\f{\mu_3}{\sqrt{2}}(E_{23}+E_{32}),\\
A_5=\f{\mu_5}{\sqrt{6}}\text{diag}(-1,-1,2),\quad&
A_\a=0\ (\forall \a\geq 6)
\endaligned
\end{equation}

{\bf Case III:  $\la_2=\la_3=\la_4$.} In this case, (\ref{con1}) automatically holds.
Without loss of generality we can assume
\begin{equation}
\eta_1\geq 0\geq \eta_3\geq \eta_2.
\end{equation}
If $0>\eta_3>\eta_2$,
the equality of (\ref{ineq2}) holds if and only if
$F(W_{\la_2})\subset V_{(\eta_1-\eta_2)^2}\oplus V_{(\eta_1-\eta_3)^2}\oplus V_{(\eta_2-\eta_3)^2}$
and $F(W_{\la_\a})\subset V_0$ for each $\a\geq 5$, where
\begin{equation}\aligned
&V_{(\eta_1-\eta_2)^2}=\text{span}\{E_{12}+E_{21}\},\\
&V_{(\eta_1-\eta_3)^2}=\text{span}\{E_{13}+E_{31}\},\\
&V_{(\eta_2-\eta_3)^2}=\text{span}\{E_{23}+E_{32}\},\\
&V_0=\text{span}\{\text{diag}(\eta_2-\eta_3,\eta_3-\eta_1,\eta_1-\eta_2)\}.
\endaligned
\end{equation}
Equivalently, there exists an orthonormal basis $u_1,\cdots,u_m$ of $W$, such that
\begin{equation}\aligned
A_1=&\mu_1\text{diag}(\eta_1,\eta_2,\eta_3),
\quad
A_2=\f{\mu_2}{\sqrt{2}}(E_{12}+E_{21})\\
A_3=&\f{\mu_2}{\sqrt{2}}(E_{13}+E_{31}),\quad
A_4=\f{\mu_2}{\sqrt{2}}(E_{23}+E_{32}),\\
A_5=&\f{\mu_5}{\sqrt{3}}\text{diag}(\eta_2-\eta_3,\eta_3-\eta_1,\eta_1-\eta_2),\\
A_\a=&0\quad (\forall \a\geq 6).
\endaligned
\end{equation}
When $\eta_2=\eta_3$ (i.e. $(\eta_1,\eta_2,\eta_3)=\f{1}{\sqrt{6}}(2,-1,-1)$) or
$\eta_3=0$ (i.e. $(\eta_1,\eta_2,\eta_3)=\f{1}{\sqrt{2}}(1,-1,0)$),
the discussion is quite similar as in Case I or II, respectively, yielding the
analogous results. So we omit the details here.

As a summary, we rewrite the conclusions in the following proposition.

\begin{pro}\label{dim3}
When $n=3$, the equality of (\ref{ineq2}) holds if and only if there exists
an orthonormal basis $u_1,\cdots,u_m$, such that
after an orthonormal base change and up to a sign, one and only one of following 3 cases holds:
\begin{itemize}
\item {(I)} $A_1=\f{\mu_1}{\sqrt{6}}\mathrm{diag}(2,-1,-1)$, $A_2=\f{\mu_2}{\sqrt{2}}(E_{12}+E_{21})$,
$A_3=\f{\mu_2}{\sqrt{2}}(E_{13}+E_{31})$, $A_4=\f{\mu_4}{\sqrt{2}}(E_{23}+E_{32})$,
$A_5=\f{\mu_5}{\sqrt{2}}\mathrm{diag}(0,-1,1)$ and $A_\a=0$ for each $\a\geq 6$,
with $\mu_1\geq \mu_2>\mu_4\geq \mu_5$.
\item {(II)} $A_1=\f{\mu_1}{\sqrt{2}}\mathrm{diag}(1,-1,0)$, $A_2=\f{\mu_2}{\sqrt{2}}(E_{12}+E_{21})$,
$A_3=\f{\mu_3}{\sqrt{2}}(E_{13}+E_{31})$, $A_4=\f{\mu_3}{\sqrt{2}}(E_{23}+E_{32})$,
$A_5=\f{\mu_5}{\sqrt{6}}\mathrm{diag}(-1,-1,2)$ and $A_\a=0$ for each $\a\geq 6$,
with $\mu_1\geq \mu_2>\mu_3\geq \mu_5$.
\item {(III)} $A_1=\mu_1\mathrm{diag}(\eta_1,\eta_2,\eta_3)$, $A_2=\f{\mu_2}{\sqrt{2}}(E_{12}+E_{21})$,
$A_3=\f{\mu_2}{\sqrt{2}}(E_{13}+E_{31})$, $A_4=\f{\mu_2}{\sqrt{2}}(E_{23}+E_{32})$,
$A_5=\f{\mu_5}{\sqrt{3}}\mathrm{diag}(\eta_2-\eta_3,\eta_3-\eta_1,\eta_1-\eta_2)$ and $A_\a=0$ for each $\a\geq 6$,
with $\mu_1\geq \mu_2\geq \mu_5$ and $\eta_1\geq 0\geq \eta_3\geq \eta_2$.
\end{itemize}
Here $A_\a:=F(u_\a)$ and $\mu_\a:=\sqrt{\la_\a}$.
\end{pro}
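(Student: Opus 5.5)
The plan is to trace the equality case through the two inequalities in the chain (\ref{Lie-ineq}), with $A=A_1/\mu_1=\mathrm{diag}(\eta_1,\eta_2,\eta_3)$. Since $n=3$, the right-hand side of (\ref{ineq2}) is $\la_1(2\la_2+\la_3)$.

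\emph{The second inequality.} Its equality is exactly (\ref{con1}), which says: $S_1=2$ whenever $\la_2>\la_3$, and $S_2=3$ whenever $\la_3>\la_4$. By Lemma \ref{sq-diff}:
\begin{itemize}
\item $S_1=2$ forces $\{\eta_i\}=\{\f{1}{\sqrt2},-\f{1}{\sqrt2},0\}$;
\item $S_2=3$ forces $\{\eta_i\}=\pm\f{1}{\sqrt6}\{2,-1,-1\}$, the sign coming from replacing $A_1$ by $-A_1$.
\end{itemize}
These two configurations are incompatible, so $\la_2>\la_3$ and $\la_3>\la_4$ cannot both hold. Hence exactly one of the following occurs: $\la_3>\la_4$ (and then $\la_2=\la_3$), or $\la_2>\la_3$ (and then $\la_3=\la_4$), or $\la_2=\la_3=\la_4$. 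This gives the trichotomy and the ``one and only one'' part of the statement, once the strict inequalities $\mu_2>\mu_4$ in (I) and $\mu_2>\mu_3$ in (II) are recorded.

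\emph{The first inequality.} Its equality is governed by Corollary \ref{eig1}. I would apply it not on all of $\mathrm{Sym}_3$ but on $V:=A^\perp\cap\mathrm{Sym}^0_3$, which has dimension $4$. This space contains $A_2,\dots,A_m$, since these are traceless and orthogonal to $A_1$, and it is invariant under $(\operatorname{ad}A)^2$. Its eigenvalues there are the three numbers $(\eta_i-\eta_j)^2$ for $i<j$, with eigenvectors $E_{ij}+E_{ji}$, together with $0$ on the traceless diagonal matrices orthogonal to $A$. Corollary \ref{eig1} then yields an orthonormal basis $u_2,\dots,u_m$ and an eigenbasis $\ep_2,\dots,\ep_5$ of $V$ with $F(u_\a)=\mu_\a\ep_\a$, ordered by decreasing eigenvalue. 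Reading off the eigenspaces case by case produces (I), (II) and (III):
\begin{itemize}
\item Case I: $V_{3/2}$ is spanned by $E_{12}+E_{21}$ and $E_{13}+E_{31}$.
\item Case II: $V_2$ is spanned by $E_{12}+E_{21}$, and $V_{1/2}$ by $E_{13}+E_{31}$ and $E_{23}+E_{32}$.
\item Case III with $\eta_1>0>\eta_3>\eta_2$: the eigenvalues are simple, the three off-diagonal directions absorb the equal $\la_2=\la_3=\la_4$, and the $0$-eigenvector is $\mathrm{diag}(\eta_2-\eta_3,\eta_3-\eta_1,\eta_1-\eta_2)/\sqrt3$.
\end{itemize}
The converse, that each listed configuration attains equality, is a direct computation of $\sum\|[A_1,A_\a]\|^2$.

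\emph{Main obstacle.} The delicate step is normalising inside eigenspaces of multiplicity greater than one, where Corollary \ref{eig1} only gives \emph{some} orthonormal eigenbasis. In Case I the $0$-eigenspace of $V$ is the $2$-plane spanned by $E_{23}+E_{32}$ and $\mathrm{diag}(0,1,-1)$, so $F(u_4)$ and $F(u_5)$ are an arbitrary orthonormal pair there. I would conjugate by the rotation $T$ in the $(2,3)$-block. This fixes $A_1$, preserves $V_{3/2}$, and acts on the $0$-plane as rotation by $2\th$, so a suitable $\th$ brings $F(u_4),F(u_5)$ to the standard form. Inside $V_{3/2}$, where $\la_2=\la_3$, a further rotation of $u_2,u_3$ within $W_{\la_2}$ restores the standard form of $A_2,A_3$. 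The same device handles $V_{1/2}$ in Case II, where $\la_3=\la_4$ lets me rotate $u_3,u_4$ freely. For the boundary subcases of Case III ($\eta_2=\eta_3$ or $\eta_3=0$) I would reduce to the Case I or Case II arguments, up to a sign and a permutation of coordinates.
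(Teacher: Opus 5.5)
Your proposal is correct and follows the paper's route. You use (\ref{con1}) with Lemma \ref{sq-diff} to fix $\eta$ and force $\la_2=\la_3$ or $\la_3=\la_4$, then Corollary \ref{eig1} for the eigenspace containments, and the $(2,3)$-block rotation $T$ together with rotations inside $W_{\la_2}$ (or $W_{\la_3}$) to normalise. Your explicit restriction of $(\operatorname{ad}A)^2$ to $A^\perp\cap\mathrm{Sym}^0_3$ only makes precise what the paper does implicitly when its $V_0$ omits $A$ and the non-traceless diagonal matrices.
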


\bigskip\bigskip

    \Section{Simons-type inequality for minimal submanifolds in $S^{n+m}$}{Simons-type inequality for minimal submanifolds in $S^{n+m}$}\label{preliminaries}

    \subsection{Minimal submanifolds in the unit spheres}

Let $M$ be an $n$-dimensional submanifold in an $(n+m)$-dimensional Riemannian manifold $\ol{M}$, equipped with the induced Riemannian metric. The {\it second fundamental form} $B$ is
a pointwise symmetric bilinear form on $T_p M$ (the tangent space at $p\in M$) with values in $N_p M$ (the normal space at $p$), defined by
\begin{equation}
B_{XY}=(\ol{\n}_X Y)^N
\end{equation}
with $\ol{\n}$ the induced Levi-Civita connection on $\ol{M}$.  The induced connection on the tangent bundle $TM$ and the normal bundle $NM$ are defined by
\begin{equation}
\n_X Y=(\ol{\n}_X Y)^T,\quad \n_X \nu=(\ol{\n}_X \nu)^N.
\end{equation}
Here $X,Y$ are local smooth sections of $TM$, $\nu$ is a local smooth section of $NM$,
$(\cdots)^T$ and $(\cdots)^N$ denote the orthogonal projection onto the tangent space and the normal space, respectively.
The {\it shape operator}
\begin{equation}
A^\nu(X):=-(\ol{\n}_X \nu)^T,
\end{equation}
satisfying the Weingarten equations
\begin{equation}
\lan A^\nu(X),Y\ran=\lan B_{XY},\nu\ran,
\end{equation}
is a self-adjoint transformation on the tangent space at each point. The second fundamental form,
the curvature tensor of the submanifold, the curvature tensor of the normal bundle
and the curvature tensor of the ambient manifold satisfy the {\it Gauss equations},
the {\it Codazzi equations} and the {\it Ricci equations} (see e.g. \cite{X}):
\begin{equation}\label{GCR}
\aligned
&\lan R_{XY}Z,W\ran=\lan \ol{R}_{XY}Z,W\ran+\lan B_{XZ},B_{YW}\ran-\lan B_{XW},B_{YZ}\ran,\\
 &(\n_X B)_{YZ}-(\n_Y B)_{XZ}=-(\ol{R}_{XY}Z)^N,\\
&\lan R_{XY}\mu,\nu\ran=\lan \ol{R}_{XY}\mu,\nu\ran+\lan A^\mu(X),A^\nu(Y)\ran-\lan A^\mu(Y),A^\nu(X)\ran.
\endaligned
\end{equation}
Here
\begin{equation}\aligned
&R_{XY}Z:=-\n_X\n_Y Z+\n_Y\n_X Z+\n_{[X,Y]}Z,\\
&\ol{R}_{XY}Z:=-\ol{\n}_X\ol{\n}_Y Z+\ol{\n}_Y\ol{\n}_X Z+\ol{\n}_{[X,Y]}Z,\\
&(\n_X B)_{YZ}:=\n_X B_{YZ}-B_{\n_X Y,Z}-B_{Y,\n_X Z},\\
&R_{XY}\mu:=-\n_X\n_Y \mu+\n_Y\n_X \mu+\n_{[X,Y]}\mu.
\endaligned
\end{equation}
Denote
\begin{equation}
(\n^2_{XY}B):=\n_X\n_Y B-\n_{\n_X Y}B,
\end{equation}
then
\begin{equation}\label{dXY}
\n_{XY}^2 B-\n_{YX}^2 B=\n_X\n_Y B-\n_Y\n_X B-\n_{[X,Y]}B=-R_{XY}B.
\end{equation}

Taking the trace of $B$ gives the {\it mean curvature vector} $H$ of $M$.
$M$ is called a {\it minimal submanifold} whenever $H$ vanishes everywhere on $M$.
Particularly if $B\equiv 0$, $M$ is called a {\it totally geodesic submanifold}.

Now we assume $M$ is a minimal submanifold in $S^{n+m}$, the unit Euclidean sphere of dimension $(n+m)$.
Let $\{E_1,\cdots,E_n\}$ be a local orthonormal tangent frame field, $\{\nu_1,\cdots,\nu_m\}$
be a local orthonormal normal field on $M$, and
\begin{equation}
h_{ij}^\a:=\lan B_{E_iE_j},\nu_\a\ran,\quad h_{ijk}^\a:=\lan (\n_{E_k}B)_{E_iE_j},\nu_\a\ran,\quad h_{ijkl}^\a=\lan (\n^2_{E_lE_k}B)_{E_iE_j},\nu_\a\ran
\end{equation}
be the coefficients of $B$, $\n B$ and $\n^2 B$, respectively. We shall use the summation convention and agree on the ranges of indices
\begin{equation}
1\leq i,j,k,l\leq n,\quad 1\leq \a,\be,\g,\de\leq m.
\end{equation}
Since $S^{n+m}$ has constant sectional curvature $1$,
\begin{equation}\label{s1}
\lan \ol{R}_{XY}Z,W\ran=\lan X,Z\ran\lan Y,W\ran-\lan X,W\ran \lan Y,Z\ran
\end{equation}
and
\begin{equation}\label{s2}
(\ol{R}_{XY}Z)^N=\lan \ol{R}_{XY}\mu,\nu\ran=0.
\end{equation}
As in \cite{Lu}, define
\begin{equation}
\De h_{ij}^\a:=\sum_{k=1}^n h_{ijkk}^\a,
\end{equation}
then a direct calculation based on (\ref{GCR}), (\ref{dXY}), (\ref{s1}), (\ref{s2}) and the minimality of $M$ shows
\begin{equation}\label{LaB}\aligned
\De h_{ij}^\a=&h_{ijkk}^\a=h_{kijk}^\a=(h_{kijk}^\a-h_{kikj}^\a)+h_{kkij}^\a=-\lan (R_{E_k E_j}B)_{E_kE_i},\nu_\a\ran\\
=&-\lan R_{E_k E_j}B_{E_kE_i},\nu_\a\ran+\lan B_{R_{E_k E_j}E_k,E_i},\nu_\a\ran+\lan B_{E_k,R_{E_kE_j}E_i},\nu_\a\ran\\
=&-h_{ki}^\be\big(\lan A^\be(E_k),A^\a(E_j)\ran-\lan A^\be(E_j),A^\a(E_k)\ran\big)\\
&+h_{li}^\a(\de_{kk}\de_{jl}-\de_{kl}\de_{jk}+h_{kk}^\be h_{jl}^\be-h_{kl}^\be h_{jk}^\be)\\
&+h_{kl}^\a(\de_{ki}\de_{jl}-\de_{kl}\de_{ji}+h_{ki}^\be h_{jl}^\be-h_{kl}^\be h_{ji}^\be)\\
=&nh_{ij}^\a-h_{ik}^\be h_{kl}^\be h_{lj}^\a+2h_{ik}^\be h_{kl}^\a h_{lj}^\be-h_{il}^\a h_{lk}^\be h_{kj}^\be-h_{kl}^\be h_{kl}^\a h_{ij}^\be.
\endaligned
\end{equation}
Here $A^\a:=A^{\nu_\a}$ is the shape operator on the tangent space at each considered point, which can also be seen as
an $(n\times n)$-symmetric matrix, i.e. $A^\a=(h_{ij}^\a)$. Thus, in terms of matrix notations, (\ref{LaB}) can be rewritten as (see \cite{Lu})
\begin{equation}\label{LaB1}
\De A^\a=nA^\a-\lan A^\a,A^\be\ran A^\be-[A^\be,[A^\be,A^\a]].
\end{equation}

Let
\begin{equation}
|B|^2:=\lan B_{E_iE_j},B_{E_iE_j}\ran=\sum_{i,j,\a}(h_{ij}^\a)^2
\end{equation}
be the squared norm of the second fundamental form, then
\begin{equation}\label{LaB2}\aligned
&\f{1}{2}\De |B|^2=h_{ijk}^\a h_{ijk}^\a+h_{ij}^\a h_{ijkk}^\a\\
=&|\n B|^2+n|B|^2-\sum_{\a,\be} \lan A^\a,A^\be\ran^2-\sum_{\a,\be}\|[A^\a,A^\be]\|^2
\endaligned
\end{equation}
with $\De$ the Laplace-Beltrami operator on $M$.

\subsection{A pinching theorem on eigenvalues of fundamental matrices}

Let
\begin{equation}
S(\mu,\nu):=\lan A^\mu,A^\nu\ran=\sum_{i,j}\lan A^\mu(E_i),E_j\ran \lan A^\nu(E_i),E_j\ran
\end{equation}
be a bilinear nonnegative definite symmetric tensor on $M$,
\begin{equation}
(S_{\a\be}):=\big(S(\nu_\a,\nu_\be)\big)=\big(\lan A^\a,A^\be\ran\big)
\end{equation}
be the {\it fundamental matrix} of $M$ w.r.t. $\nu_1,\cdots,\nu_m$ at the considered point,
whose eigenvalues are
\begin{equation}
\la_1\geq\la_2\geq\cdots \geq\la_m,
\end{equation}
depending not on the choices of the orthonormal basis of the normal space.
Then
\begin{equation}
|B|^2=\text{tr }S=\sum_{\a=1}^m \la_\a.
\end{equation}

For any positive number $p$, we consider the $C^\infty$-function
\begin{equation}
f_p:=\text{tr}(S^p)
\end{equation}
as in \cite{Lu}. Let $x\in M$ be an arbitrary point, choose a local orthonormal frame field,
such that the fundamental matrix at $x$ is diagonalized, i.e.
\begin{equation}
S_{\a\be}(x)=\la_\a\de_{\a\be}
\end{equation}
and
\begin{equation}
\la_1=\cdots=\la_r>\la_{r+1}\geq \cdots\geq \la_m.
\end{equation}
i.e. $r$ is the multiplicities of the greatest eigenvalue of $S$ at $x$.

By (\ref{LaB1}), for each $1\leq \a\leq m$,
\begin{equation}\label{de5}\aligned
\f{1}{2}\n^2 S_{\a\a}:=&\f{1}{2}(\n^2_{E_kE_k}S)(\nu_\a,\nu_\a)=h_{ijk}^\a h_{ijk}^\a+h_{ijkk}^\a h_{ij}^\a\\
=&|\n A^\a|^2+nS_{\a\a}-\sum_\be S_{\a\be}^2-\sum_\be \|[A^\a,A^\be]\|^2\\
=&|\n A^\a|^2+n\la_\a-\la_\a^2-\sum_\be \|[A^\a,A^\be]\|^2
\endaligned
\end{equation}
at $x$.
By Theorem \ref{ineq1}, if $r\leq n$, then
\begin{equation}\label{Lu2}\aligned
&\sum_{\be}\|[A^\a,A^\be]\|^2\leq \|A^\a\|^2\left(\sum_{\be\leq n,\be\neq \a}\|A^\be\|^2+\|A^2\|^2 \right)\\
=&\la_\a\left(\sum_{\be\leq n,\be\neq \a}\la_\be+\la_2\right)=\la_\a\left(\sum_{\be\leq n}\la_\be-\la_\a+\la_2\right)
\endaligned
\end{equation}
for each $1\leq \a\leq r$,
\begin{equation}\aligned
&\sum_{\be}\|[A^\a,A^\be]\|^2\leq \|A^\a\|^2\left(\sum_{\be\leq n,\be\neq \a}\|A^\be\|^2+\|A^1\|^2 \right)\\
=&\la_\a\left(\sum_{\be\leq n,\be\neq \a}\la_\be+\la_1\right)=\la_\a\left(\sum_{\be\leq n}\la_\be-\la_\a+\la_1\right)
\endaligned
\end{equation}
whenever $r+1\leq \a\leq n$, and
\begin{equation}\label{Lu3}\aligned
&\sum_{\be}\|[A^\a,A^\be]\|^2\leq \|A^\a\|^2\left(\sum_{\be\leq n-1}\|A^\be\|^2+\|A^1\|^2 \right)\\
=&\la_\a\left(\sum_{\be\leq n-1}\la_\be+\la_1\right)\leq \la_\a\left(\sum_{\be\leq n}\la_\be-\la_\a+\la_1\right)
\endaligned
\end{equation}
whenever $\a\geq n+1$. On the other hand, if $r\geq n+1$, then
(\ref{Lu2}) holds for each $1\leq \a\leq n$,
\begin{equation}\label{Lu4}\aligned
&\sum_{\be}\|[A^\a,A^\be]\|^2\leq \|A^\a\|^2\left(\sum_{\be\leq n-1}\|A^\be\|^2+\|A^2\|^2 \right)\\
=&\la_\a\left(\sum_{\be\leq n-1}\la_\be+\la_2\right)=\la_\a\left(\sum_{\be\leq n}\la_\be-\la_\a+\la_2\right)
\endaligned
\end{equation}
for each $n+1\leq \a\leq r$ and (\ref{Lu3}) holds for each $\a\geq r+1$.

Note that
\begin{equation}
f_p=\text{tr}(S^p)=\sum_{\a_1,\cdots,\a_p}S_{\a_1\a_2}S_{\a_2\a_3}\cdots S_{\a_p\a_1},
\end{equation}
we have
\begin{equation}\label{f1}\aligned
\f{1}{2}\De f_p=&\f{1}{2}\sum_{\a_1,\cdots,\a_p}(\n^2 S_{\a_1\a_2}) S_{\a_2\a_3}\cdots S_{\a_p\a_1}+\cdots+\f{1}{2}\sum_{\a_1,\cdots,\a_p}S_{\a_1\a_2}S_{\a_2\a_3}\cdots (\n^2 S_{\a_m\a_1})\\
&+\sum_{\a_1,\cdots,\a_p}\sum_{j<k}S_{\a_1\a_2}\cdots\widehat{S_{\a_j\a_{j+1}}}\cdots\widehat{S_{\a_k\a_{k+1}}}\cdots S_{\a_p\a_1}\lan \n S_{\a_j\a_{j+1}},\n S_{\a_k\a_{k+1}}\ran\\
=&p\sum_\a \left(\f{1}{2}\n^2 S_{\a\a}\right)\cdot \la_\a^{p-1}+p\sum_{\a<\be}\sum_{s+t=p-2}|\n S_{\a\be}|^2\la_\a^s \la_\be^t+\f{p(p-1)}{2}\sum_\a |\n S_{\a\a}|^2\la_\a^{p-2}\\
\geq &npf_p-p\sum_{1\leq \a\leq r}\la_\a^p\left(\sum_{\be\leq n} \la_\be+\la_2\right)-p\sum_{r+1\leq \a\leq m}\la_\a^p\left(\sum_{\be\leq n} \la_\be+\la_1\right)\\
&+p\sum_\a |\n A^\a|^2\la_\a^{p-1}+\f{p(p-1)}{2}\sum_\a |\n S_{\a\a}|^2\la_\a^{p-2}
\endaligned
\end{equation}
by using (\ref{de5})-(\ref{Lu4}).
On the other hand, with the aid of the Cauchy inequality, we get
\begin{equation}\label{f2}\aligned
|\n f_p|^2=&p^2 \sum_i\left(\sum_\a (\n_{E_i}S_{\a\a})\la_\a^{p-1}\right)^2\\
=&p^2 \sum_i\left(\sum_\a (\n_{E_i}S_{\a\a})\la_\a^{\f{p}{2}-1}\cdot \la_\a^{\f{p}{2}}\right)^2\\
\leq & p^2 f_p \sum_{\a}|\n S_{\a\a}|^2 \la_\a^{p-2}.
\endaligned
\end{equation}

Let
\begin{equation}
g_p:=(f_p)^{\f{1}{p}}.
\end{equation}
then (\ref{f1}) and (\ref{f2}) implies
\begin{equation}\aligned
\De g_p=&\f{1}{p}(f_p)^{\f{1}{p}-1}\De f_p+\f{1}{p}\left(\f{1}{p}-1\right)(f_p)^{\f{1}{p}-2}|\n f_p|^2\\
\geq & 2g_p\Biggl[n-f_p^{-1}\sum_{1\leq \a\leq r}\la_\a^p\left(\sum_{\be\leq n} \la_\be+\la_2\right)-f_p^{-1}\sum_{r+1\leq \a\leq m}\la_\a^p\left(\sum_{\be\leq n} \la_\be+\la_1\right) \\ &+f_p^{-1}\sum_\a |\n A^\a|^2 \la_\a^{p-1}\Biggl]
\endaligned
\end{equation}
whenever $f_p(x)\neq 0$. Noting that
\begin{equation}
\lim_{p\ra \infty}f_p^{-1}\la_\a^p=\lim_{p\ra\infty}\f{\left(\f{\la_\a}{\la_1}\right)^p}{\sum\limits_\be \left(\f{\la_\be}{\la_1}\right)^p}=\left\{
\begin{matrix}\f{1}{r} & \a\leq r\\ 0 & \a\geq r+1\end{matrix}\right.
\end{equation}
we have
\begin{equation}
\lim_{p\ra \infty}\De g_p\geq 2\la_1\left(n-\sum_{\be\leq n} \la_\be-\la_2\right)+\f{2}{r}\sum_{1\leq \a\leq r}|\n A^\a|^2.
\end{equation}
In conjunction with $\int_M \De g_p=0$, we get the following Simons type integral inequality:
\begin{pro}\label{p1}
Let $M$ be an $n$-dimensional compact minimal submanifold in $S^{n+m}$, 
 $\la_1=\cdots=\la_r>\la_{r+1}\geq \cdots\geq \la_m$ be the eigenvalues of the fundamental matrix at each considered point, then
\begin{equation}
\int_M \left(\la_1\left(n-\sum_{\be=1}^{\min\{n,m\}}\la_\be-\la_2\right)+\f{1}{r}\sum_{\a=1}^r|\n A^\a|^2\right)*1\leq 0.
\end{equation}
\end{pro}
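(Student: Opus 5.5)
The plan is to integrate the pointwise lower bound for $\lim_{p\to\infty}\De g_p$ just derived. The difficulty is that this bound is only a pointwise limit, and $g_p$ need not be smooth where $f_p$ vanishes. So we work with $g_p$ at finite $p$ and pass to the limit under the integral sign.

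\textbf{Step 1: smoothing near the zero set.} For fixed $p\geq 2$, $f_p=\text{tr}(S^p)$ is smooth on $M$. However, $g_p=f_p^{1/p}$ is only Lipschitz near the set $Z:=\{f_p=0\}=\{B=0\}$. I would replace $g_p$ by $g_{p,\ep}:=(f_p+\ep)^{1/p}$, which is smooth for $\ep>0$. The same computation as in (\ref{f1})--(\ref{f2}) then gives, at every point,
\begin{equation*}
\De g_{p,\ep}\geq 2\,\f{f_p}{f_p+\ep}\,g_{p,\ep}\Bigl[n-f_p^{-1}\sum_{\a\leq r}\la_\a^p\Bigl(\sum_{\be\leq n}\la_\be+\la_2\Bigr)-f_p^{-1}\sum_{\a>r}\la_\a^p\Bigl(\sum_{\be\leq n}\la_\be+\la_1\Bigr)+f_p^{-1}\sum_\a|\n A^\a|^2\la_\a^{p-1}\Bigr].
\end{equation*}
This holds off $Z$, and the right side is read as $0$ on $Z$. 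The Cauchy-type term $|\n f_p|^2$ only gets a better coefficient, since $\f1p-1<0$ and $(f_p+\ep)^{-1}\leq f_p^{-1}$. Integrating over the closed manifold $M$ gives $\int_M\De g_{p,\ep}=0$. Letting $\ep\to 0$ by dominated convergence yields
\begin{equation*}
\int_{M\setminus Z} g_p[\cdots]\leq 0.
\end{equation*}
The integrand is bounded in terms of $|B|$ and $|\n B|$, because $f_p^{-1}\la_\a^p\leq 1$ and $g_p f_p^{-1}\la_\a^{p-1}\leq 1$.

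\textbf{Step 2: the limit $p\to\infty$.} Since $\la_\a/\la_1\leq 1$, we have $\la_1\leq g_p\leq m^{1/p}\la_1$, so $g_p\to\la_1$. Also $f_p^{-1}\la_\a^p\to 1/r$ for $\a\leq r$ and $\to 0$ for $\a>r$, where $r=r(x)$ is the pointwise multiplicity of the top eigenvalue. Finally $g_pf_p^{-1}\la_\a^{p-1}\to 1/r$ for $\a\leq r$ and $\to 0$ otherwise. All these quantities are bounded uniformly in $p$ by constants depending on $\max|B|^2$. The gradient term is also bounded by $|\n B|^2$, which is continuous on compact $M$. Hence dominated convergence applies to the integrand, whether or not $r$ varies from point to point. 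The limit integrand is exactly
\begin{equation*}
\la_1\Bigl(n-\sum_{\be\leq\min\{n,m\}}\la_\be-\la_2\Bigr)+\f1r\sum_{\a\leq r}|\n A^\a|^2.
\end{equation*}
This uses $\f{1}{r}\sum_{\a\leq r}\la_1\bigl(\sum_{\be\leq n}\la_\be+\la_2\bigr)=\la_1\bigl(\sum_{\be\leq n}\la_\be+\la_2\bigr)$. On $Z$ this integrand vanishes, so the integral over $M$ is $\leq 0$. This is the claim.

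\textbf{Main obstacle.} The delicate point is to justify the pointwise inequality for $\De f_p$ with a frame adapted at each point. One must also check that the estimates (\ref{Lu2})--(\ref{Lu4}) cover both regimes $r\leq n$ and $r>n$. In particular, (\ref{Lu3}) is needed for $\a>n$; that bound uses $\la_\a\leq\la_n$, and it is fine as stated. Beyond that, the main care is the uniform domination in $p$ near points where $r$ jumps and near $Z$, and the bounds in Step 2 take care of it.
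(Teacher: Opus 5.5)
Your proposal follows the paper's route: the pointwise lower bound for $\Delta g_p$ from (\ref{f1})--(\ref{f2}), then $\int_M \Delta g_p = 0$, then $p\to\infty$. It also supplies what the paper leaves implicit, namely the regularization $(f_p+\varepsilon)^{1/p}$ near $\{B=0\}$ and the uniform bounds that let both limits pass under the integral. Those steps check out. The leftover gradient coefficient is $(p-1)\varepsilon (f_p+\varepsilon)^{1/p-2}\ge 0$, and $(\lambda_\alpha^p/f_p)^{(p-1)/p}\le 1$ with the limits you state.

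One assertion is false as written. On $Z=\{B=0\}$ all eigenvalues vanish, so $r=m$ there. The integrand of the proposition therefore equals $\frac{1}{m}|\nabla B|^2$ on $Z$, not $0$. Your dominated-convergence argument only controls $\int_{M\setminus Z}$, so you still need $\int_Z |\nabla B|^2 = 0$.

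This is true, but it needs an argument. Take $x\in Z$ with $(\nabla B)_x\neq 0$. In local frames the connection terms in $h^\alpha_{ijk}$ are linear in $h$, so $h^\alpha_{ijk}(x)=E_k(h^\alpha_{ij})(x)$. Hence some component $h^\alpha_{ij}$ has nonzero differential at $x$, and near $x$ the set $Z$ lies in the smooth hypersurface $\{h^\alpha_{ij}=0\}$. Therefore $Z\cap\{\nabla B\neq 0\}$ is a null set. Alternatively, by real-analyticity of minimal submanifolds of spheres, either $B\equiv 0$ (trivial case) or $Z$ has measure zero. With this line added, your argument is complete and more rigorous than the paper's.
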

This inequality enable us to establish a pinching theorem as follows.
\begin{thm}
Let $M$ be an $n$-dimensional compact minimal submanifold in $S^{n+m}$, if for each $x\in M$,
\begin{equation}
\sum_{\a=1}^{\min\{n,m\}}\la_\a+\la_2\leq n
\end{equation}
with $\la_1\geq \cdots\geq \la_m$ be the eigenvalues of the fundamental matrix at $x$, then
$M$ is totally geodesic or $\sum\limits_{\a=1}^{\min\{n,m\}}\la_\a+\la_2\equiv n$.
\end{thm}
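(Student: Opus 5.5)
The plan is to apply Proposition \ref{p1} directly. By assumption,
$$\sum_{\be=1}^{\min\{n,m\}}\la_\be+\la_2\leq n$$
at each point. Since $\la_1\geq 0$, the integrand
$$\la_1\Big(n-\sum_{\be=1}^{\min\{n,m\}}\la_\be-\la_2\Big)+\f{1}{r}\sum_{\a=1}^r|\n A^\a|^2$$
is therefore pointwise nonnegative. Its integral over $M$ is $\leq 0$ by Proposition \ref{p1}, so the integrand vanishes identically on $M$. (Here I treat the integrand as an integrable function of $x$, with $r=r(x)$. This is legitimate because the inequality was obtained as the pointwise limit of $\Delta g_p$, and $\int_M\Delta g_p=0$. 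To make the limit rigorous I would first integrate the lower bound for $\Delta g_p$ for finite $p$, where everything is continuous, and then use Fatou's lemma, or dominated convergence, as $p\to\infty$.) In particular, at every point
$$\la_1\Big(n-\sum_{\be\leq\min\{n,m\}}\la_\be-\la_2\Big)=0.$$

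Next I split $M$ into two sets. Let $U:=\{x\in M:\la_1(x)>0\}=\{x:|B|^2(x)>0\}$; it is open because $\la_1=\max$ eigenvalue is continuous and $|B|^2=\text{tr}\,S\leq m\la_1$. On $U$ the identity above forces
$$\sum_{\a=1}^{\min\{n,m\}}\la_\a+\la_2=n.$$
On $M\setminus U$ all $\la_\a$ vanish, so the quantity equals $0$ there. The function
$$\phi:=\sum_{\a=1}^{\min\{n,m\}}\la_\a+\la_2$$
is continuous, since ordered eigenvalues of a continuous symmetric matrix field are continuous. Thus $\phi$ takes only the values $0$ and $n$. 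Since $M$ is connected (compact submanifolds are taken connected, as is standard), $\phi\equiv 0$ or $\phi\equiv n$.

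If $\phi\equiv 0$, then $\la_1\equiv 0$, so $S\equiv 0$ and $B\equiv 0$: $M$ is totally geodesic. Otherwise $\phi\equiv n$, which is the second alternative of the theorem.

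The main obstacle is the justification of the limiting integral inequality, i.e. that Proposition \ref{p1} really holds with a possibly discontinuous $r(x)$ and at points where $f_p=0$. This is already asserted in Proposition \ref{p1}, which I take as given. The only care needed in my argument is the continuity of $\phi$ together with the connectedness argument, both of which are routine.
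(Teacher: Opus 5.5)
Your proposal is correct and follows the paper's route: the paper states this theorem as an immediate consequence of Proposition \ref{p1}, and your continuity-plus-connectedness argument supplies the step from the integral inequality to ``$\sum_{\a\le\min\{n,m\}}\la_\a+\la_2\equiv 0$ or $\equiv n$'' that the paper leaves implicit. One small fix: a nonnegative integrand with nonpositive integral vanishes only almost everywhere, not identically. To get $\la_1\big(n-\sum_{\be\le\min\{n,m\}}\la_\be-\la_2\big)=0$ at every point, invoke the continuity of this function, which your continuity argument for the ordered eigenvalues already gives.
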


We call a minimal submanifold $M^n\subset S^{n+m}$ be a {\it Simons-Chern-do Carmo-Kobayashi ideal manifold (SCDK-ideal manifold)} whenever $\sum\limits_{\a=1}^{\min\{n,m\}}\la_\a+\la_2\equiv n$.
As shown in the proof of Proposition \ref{p1}, if $M$ is a compact SCDK-ideal manifold,
then the following properties hold for almost all $x\in M$:
Let $\nu_1,\cdots,\nu_m$ be an orthonormal basis of $N_x M$
satisfying
\begin{equation}
\lan A^\a,A^\be\ran=\la_\a \de_{\a\be},
\end{equation}
then
\begin{equation}\label{c1}
\sum_\be \|[A^\a,A^\be]\|^2=\la_\a\left(\sum_{\be=1}^{\min\{n,m\}}\la_\be-\la_\a+\la_2\right)\qquad \forall \a=1,\cdots, r
\end{equation}
and
\begin{equation}\label{c2}
|\n A^\a|^2=0\qquad \forall \a=1,\cdots, r.
\end{equation}
Particularly, (\ref{c1}) and (\ref{c2}) hold true whenever $r$ is a constant on a neighborhood of $x$.

\subsection{Generalized Clifford tori and Veronese manifolds}

Let $S^q(r)\subset \R^{q+1}$ denote the $q$-dimensional Euclidean sphere centered at the origin and of radius $r$,
$\phi_q:S^q(r)\ra \R^{q+1}$ be the including mapping, then for any $1\leq p\leq q$, $\phi:S^p\left(\sqrt{\f{p}{p+q}}\right)\times S^q\left(\sqrt{\f{q}{p+q}}\right)\ra \R^{p+q+2}$
\begin{equation}
\phi(x,y)=(\phi_p(x),\phi_q(y))
\end{equation}
is an isometric embedding, whose image lies in $S^{p+q+1}$. We call $M_{p,q}:=S^p\left(\sqrt{\f{p}{p+q}}\right)\times S^q\left(\sqrt{\f{q}{p+q}}\right)$ a {\it generalized Clifford torus} (see e.g \cite{C-D-K}).
For each $(x,y)\in M_{p,q}$, we have
\begin{equation}\aligned
T_{(x,y)}M_{p,q}=&T_x S^p\left(\sqrt{\f{p}{p+q}}\right)\oplus T_y S^q\left(\sqrt{\f{q}{p+q}}\right),\\
T_{(x,y)}S^{p+q+1}=&T_{(x,y)}M_{p,q}\oplus N_{(x,y)}M_{p,q},\\
\R^{p+q+2}=&T_{(x,y)}S^{p+q+1}\oplus \R\phi(x,y).
\endaligned
\end{equation}
Due to the geometric properties of Euclidean spheres, for any $X_1,X_2\in T_x S^p\left(\sqrt{\f{p}{p+q}}\right)$,
\begin{equation}
\n_{X_1}(d\phi)(X_2)=(\n_{X_1}(d\phi_p)(X_2),0)=-\f{p+q}{p}\lan X_1,X_2\ran(\phi_p(x),0)
\end{equation}
and hence
\begin{equation}
B_{X_1X_2}=\n_{X_1}(d\phi)(X_2)^\bot=\lan X_1,X_2\ran\left(-\f{q}{p}\phi_p(x),\phi_q(y)\right)
\end{equation}
with $(\cdots)^\bot$ the orthogonal projection from $\R^{p+q+2}$ onto $T_{(x,y)}S^{p+q+1}$. Similarly
\begin{equation}
B_{Y_1Y_2}=\lan Y_1,Y_2\ran\left(\phi_p(x),-\f{p}{q}\phi_q(y)\right)
\end{equation}
for any $Y_1,Y_2\in T_y S^q\left(\sqrt{\f{q}{p+q}}\right)$ and
\begin{equation}
B_{X_1Y_1}=0.
\end{equation}
Let $e_1,\cdots,e_p$ be an orthonormal basis of $T_x S^p\left(\sqrt{\f{p}{p+q}}\right)$ and $e_{p+1},\cdots,e_{p+q}$
be an orthonormal basis of $T_y S^q\left(\sqrt{\f{q}{p+q}}\right)$, then
\begin{equation}
H=\sum_{i=1}^{p+q}B_{e_ie_i}=0
\end{equation}
and
\begin{equation}
|B|^2=\sum_{i=1}^{p+q}|B_{e_ie_j}|^2=p+q.
\end{equation}
In a summary, $M_{p,q}$ is an $n$-dimensional ($n=p+q$) minimal submanifold in the Euclidean sphere, satisfying
$\la_1\equiv n$ and $\la_\a\equiv 0$ for each $\a\geq 2$.

Let $\text{Sym}_{n+1}^0$ be the vector space of all traceless symmetric $(n+1)\times (n+1)$-matrices,
which has dimension $\f{n(n+3)}{2}$, then $\psi:S^n\Big(\sqrt{\f{2(n+1)}{n}}\Big)\ra \text{Sym}_{n+1}^0$
\begin{equation}
\psi(x)=\f{1}{2}\sqrt{\f{n}{n+1}}\left(xx^T-\f{2}{n}I_{n+1}\right)
\end{equation}
is an isometric embedding, whose image lies in the unit hypersphere of $\text{Sym}_{n+1}^0$.
(Here $x$ is seen as a column vector.) This is called a {\it Veronese $n$-manifold} (see e.g \cite{C-D-K}).
For any $x,y\in M:=S^n\Big(\sqrt{\f{2(n+1)}{n}}\Big)$, let $U$ be an orthogonal matrix
satisfying $Ux=y$, then
$$S\in \text{Sym}_{n+1}^0\mapsto USU^T\in \text{Sym}_{n+1}^0$$
is an isometric transformation on $\text{Sym}_{n+1}^0$ which maps $\psi(x)$ to $\psi(y)$.
This means each Veronese manifold is homogenous. Let $\ep_1,\cdots,\ep_{n+1}$ be the canonical basis
of $\R^{n+1}$, then at $x_0:=\sqrt{\f{2(n+1)}{n}}\ep_{n+1}\in M$,
\begin{equation}
\psi_* v=\f{1}{2}\sqrt{\f{n}{n+1}}(x_0v^T+vx_0^T)=\f{1}{\sqrt{2}}(\ep_{n+1}v^T+v\ep_{n+1}^T)
\end{equation}
for any $v\in T_{x_0}M$, and hence
\begin{equation}
\lan \psi_*\ep_i,\psi_*\ep_j\ran=\left\lan \f{1}{\sqrt{2}}(E_{n+1,i}+E_{i,n+1}),\f{1}{\sqrt{2}}(E_{n+1,j}+E_{j,n+1})\right\ran=\de_{ij},
\end{equation}
i.e. $\ep_1,\cdots,\ep_n$ is an orthonormal basis of $T_{x_0}M$. For any $\nu\in N_{x_0}M$,
\begin{equation}\label{Veronese1}
\lan B_{\ep_i\ep_j},\nu\ran=\f{1}{2}\sqrt{\f{n}{n+1}}\lan \ep_i\ep_j^T+\ep_j\ep_i^T,\nu\ran=\f{1}{2}\sqrt{\f{n}{n+1}}\lan E_{ij}+E_{ji},\nu\ran.
\end{equation}
Denote
\begin{equation}\label{Veronese2}\aligned
&\nu_{11}:=\f{1}{\sqrt{2}}(-E_{11}+E_{22}),\quad \nu_{22}:=\f{1}{\sqrt{6}}(-(E_{11}+E_{22})+2E_{33})=\f{1}{\sqrt{6}}(-I_2+2E_{33}),\cdots\\
&\nu_{jj}:=\f{1}{\sqrt{j(j+1)}}(-I_j+jE_{j+1,j+1}),\cdots,\nu_{n-1,n-1}:=\f{1}{\sqrt{(n-1)n}}(-I_{n-1}+(n-1)E_{nn})
\endaligned
\end{equation}
and
\begin{equation}\label{Veronese3}
\nu_{ij}:=\f{1}{\sqrt{2}}(E_{ij}+E_{ji})\qquad \forall 1\leq i<j\leq n,
\end{equation}
then it is easy to check that all such vectors compose an orthonormal basis of $N_{x_0}M$.
Substituting (\ref{Veronese2}) and (\ref{Veronese3}) into (\ref{Veronese1}) gives
\begin{equation}
\lan B_{\ep_i\ep_j},\nu_{jj}\ran=\begin{cases}
-\sqrt{\f{n}{n+1}}\cdot \f{1}{\sqrt{j(j+1)}} & \text{if }i\leq j\\
\sqrt{\f{n}{n+1}}\cdot\sqrt{\f{j}{j+1}}     & \text{if }i=j+1\\
0 & \text{if }i\geq j+2
\end{cases}
\end{equation}
\begin{equation}
\lan B_{\ep_i\ep_j},\nu_{ij}\ran=\lan B_{\ep_j\ep_i},\nu_{ij}\ran=\sqrt{\f{n}{2(n+1)}}\qquad \forall 1\leq i<j\leq n
\end{equation}	
and $\lan B_{\ep_i\ep_j},\nu_{kl}\ran=0$ for other cases. In other words,
\begin{equation}
A^{\nu_{jj}}=\sqrt{\f{n}{n+1}}\cdot \f{1}{\sqrt{j(j+1)}}(-I_j+jE_{j+1,j+1})\qquad \forall 1\leq j\leq n-1
\end{equation}
and
\begin{equation}
A^{\nu_{ij}}=\sqrt{\f{n}{2(n+1)}}(E_{ij}+E_{ji})\qquad \forall 1\leq i<j\leq n.
\end{equation}
It follows that $M$ is a minimal submanifold with
\begin{equation}
\la_1\equiv \la_2\equiv \cdots\equiv \la_{\f{(n-1)(n+2)}{2}}\equiv \f{n}{n+1}
\end{equation}
and $\la_\a\equiv 0$ for each $\a>\f{(n-1)(n+2)}{2}$.

In a summary, we have the following conclusion:
\begin{thm}\label{Clifford-Veronese}
Both generalized Clifford tori and Veronese submanifolds of all dimensions are compact minimal SCDK-ideal manifolds in Euclidean spheres.
\end{thm}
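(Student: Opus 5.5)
The plan is to use the two computations just carried out and to check the defining identity
\[
\sum_{\a=1}^{\min\{n,m\}}\la_\a+\la_2\equiv n
\]
in each of the two families. Minimality and compactness of both families are already established above, so only this identity remains.

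\emph{Generalized Clifford tori.} For $M_{p,q}$ the formulas for $B$ show that the whole second fundamental form takes values in a single normal line. That line is spanned by
\[
\nu:=\Big(\sqrt{\tfrac{q}{p}}\,\phi_p(x),\,-\sqrt{\tfrac{p}{q}}\,\phi_q(y)\Big)\cdot c,
\]
suitably normalized by a constant $c$. Indeed, $\left(-\f{q}{p}\phi_p(x),\phi_q(y)\right)$ and $\left(\phi_p(x),-\f{p}{q}\phi_q(y)\right)$ are proportional, since their component ratios agree. Consequently, at every point $A^\a=0$ for every normal direction orthogonal to $\nu$. The fundamental matrix therefore has rank at most one, so $\la_2=\cdots=\la_m=0$. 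Moreover $\la_1=\operatorname{tr}S=|B|^2=p+q=n$. Hence
\[
\sum_{\a\le \min\{n,m\}}\la_\a+\la_2=\la_1=n .
\]
Here $m=1$, so $\min\{n,m\}=1$, and this is consistent with the Remark on raising the codimension, which only adds zero eigenvalues.

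\emph{Veronese manifolds.} Homogeneity reduces the check to the single point $x_0$, because the isometries $S\mapsto USU^T$ preserve the fundamental matrix up to orthogonal similarity. At $x_0$ the explicit shape operators are $A^{\nu_{jj}}$ and $A^{\nu_{ij}}$. Their pairwise inner products vanish: the $\nu_{ij}$ with $i<j$ are off-diagonal with disjoint supports, and the $-I_j+jE_{j+1,j+1}$ are mutually orthogonal diagonal matrices. Each has squared norm $\f{n}{n+1}$. Thus
\[
\la_1=\cdots=\la_{(n-1)(n+2)/2}=\f{n}{n+1},\qquad \la_\a=0 \text{ otherwise},
\]
with $m=\f{n(n+3)}{2}-n=\f{n(n+1)}{2}-1=\f{(n-1)(n+2)}{2}$.

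It remains to compute $\min\{n,m\}$. For $n\ge 2$ we have $m\ge n$; the case $n=2$ gives $m=2$. Hence $\min\{n,m\}=n$, and the $n$ largest eigenvalues together with $\la_2$ are all equal to $\f{n}{n+1}$. This gives
\[
(n+1)\cdot\f{n}{n+1}=n .
\]
The only point requiring care, and the main obstacle, is confirming that the number of nonzero eigenvalues, $\f{(n-1)(n+2)}{2}$, is at least $n$. This holds since $(n-1)(n+2)-2n=n^2-n-2=(n-2)(n+1)\ge 0$. Therefore the first $n$ eigenvalues are indeed all $\f{n}{n+1}$, and both families are SCDK-ideal.
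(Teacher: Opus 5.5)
Your proposal is correct and follows the paper's route. You compute the fundamental matrix explicitly for $M_{p,q}$ (rank one, $\la_1=|B|^2=n$), and, via homogeneity, at $x_0$ for the Veronese manifold (all $\f{(n-1)(n+2)}{2}$ eigenvalues equal to $\f{n}{n+1}$). You then substitute into the defining identity, and your explicit check that $\f{(n-1)(n+2)}{2}\geq n$ for $n\geq 2$ is a step the paper leaves implicit.

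One harmless arithmetic slip: $\f{n(n+3)}{2}-n=\f{n(n+1)}{2}$, not $\f{n(n+1)}{2}-1$. The correct count is $m=\f{n(n+3)}{2}-1-n$, because $\psi(M)$ lies in the unit hypersphere of $\text{Sym}_{n+1}^0$. Your final value $m=\f{(n-1)(n+2)}{2}$ is nevertheless right.
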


	\bigskip\bigskip
	\Section{On compact SCDK-ideal 3-manifolds}{On compact SCDK-ideal 3-manifolds}

Now we assume $M$ is a 3-dimensional compact SCDK-ideal manifold in $S^{3+m}$.
Let $\la_1\geq \cdots\geq \la_m$ be the eigenvalues of the fundamental matrix at each point,
then $M=\Om_1\cup \Om_2 \cup \Om_3$ with
\begin{equation}
\aligned
\Om_1:=&\{x\in M: \la_3>\la_4\},\\
\Om_2:=&\{x\in M: \la_2>\la_3\},\\
\Om_3:=&\{x\in M:\la_2=\la_3=\la_4\}.
\endaligned
\end{equation}
We shall consider these domains case by case.

\subsection{Case I: $\la_3>\la_4$}

Obviously $\Om_1$ is an open subset of $M$. If there exists $x\in \Om_1$ such that $\la_1>\la_2$,
then $r=1$ on a neighborhood $U_x$ of $x$. Thereby, in conjunction with (\ref{c1})
and Proposition \ref{dim3}, we have $\la_2=\la_3$, and there exist an orthonormal tangent frame
field $E_1,E_2,E_3$ and normal fields $\nu_1,\nu_2,\nu_3$ on $U_x$, such that:
\begin{itemize}
\item $\lan \nu_\a,\nu_\be\ran=\de_{\a\be}$ and $\lan A^\a,A^\be\ran=\la_\a \de_{\a\be}$ for $1\leq \a,\be\leq 3$;
\item $A^1=\f{\mu_1}{\sqrt{6}}\text{diag}(2,-1,-1)$, $A^2=\f{\mu_2}{\sqrt{2}}(E_{12}+E_{21})$ and $A^3=\f{\mu_2}{\sqrt{2}}(E_{13}+E_{31})$
with $A^\a:=(h_{ij}^\a)$, $\mu_\a:=\sqrt{\la_\a}$;
\item $A^\nu(E_1)=0$ for each normal vector $\nu$ orthogonal to all of $\nu_1,\nu_2,\nu_3$.
\end{itemize}
Therefore
\begin{equation}\label{B1}\aligned
B_{E_1E_1}=\f{2\mu_1}{\sqrt{6}}\nu_1,\ &B_{E_1E_2}=\f{\mu_2}{\sqrt{2}}\nu_2,\ B_{E_1E_3}=\f{\mu_2}{\sqrt{2}}\nu_3,\\
B_{E_2E_2}=-\f{\mu_1}{\sqrt{6}}\nu_1+B_{E_2E_2}^\bot,\ &B_{E_3E_3}=-\f{\mu_1}{\sqrt{6}}\nu_1+B_{E_3E_3}^\bot,\ B_{E_2E_3}=B_{E_2E_3}^\bot.
\endaligned
\end{equation}
Here $(\cdots)^\bot$ denotes the projection onto the orthogonal complement of the subspace spanned by $\nu_1,\nu_2,\nu_3$.

(\ref{c2}) implies
\begin{equation}
h_{ijk}^1=0\qquad \forall 1\leq i,j,k\leq 3.
\end{equation}
Then for each $j$,
\begin{equation}
0=h_{11j}^1=\lan (\n_{E_j}B)_{E_1E_1},\nu_1\ran=\f{2}{\sqrt{6}}\n_{E_j}(\mu_1)
\end{equation}
forces $\mu_1\equiv \text{const}$ on $U_x$. Combining with
\begin{equation}
3=\sum_{\a=1}^3 \la_\a+\la_2=\mu_1^2+3\mu_2^2,
\end{equation}
we see $\mu_2$ is locally constant. Next, a direct calculation based on (\ref{B1}) gives
\begin{equation}\label{dB1}
\aligned
(\n_{E_j}B)_{E_1E_1}=&\f{2\mu_1}{\sqrt{6}}\n_{E_j}\nu_1-\sqrt{2}\mu_2\lan \n_{E_j}E_1,E_2\ran \nu_2-\sqrt{2}\mu_2\lan \n_{E_j}E_1,E_3\ran \nu_3,\\
(\n_{E_j}B)_{E_1E_2}=&\f{\mu_2}{\sqrt{2}}\n_{E_j}\nu_2+\f{3\mu_1}{\sqrt{6}}\lan \n_{E_j}E_1,E_2\ran \nu_1-\f{\mu_2}{\sqrt{2}}\lan \n_{E_j}E_2,E_3\ran \nu_3\\
&-\lan \n_{E_j}E_1,E_2\ran B_{E_2E_2}^\bot-\lan \n_{E_j}E_1,E_3\ran B_{E_2E_3}^\bot,\\
(\n_{E_j}B)_{E_1E_3}=&\f{\mu_2}{\sqrt{2}}\n_{E_j}\nu_3+\f{3\mu_1}{\sqrt{6}}\lan \n_{E_j}E_1,E_3\ran \nu_1+\f{\mu_2}{\sqrt{2}}\lan \n_{E_j}E_2,E_3\ran \nu_2\\
&-\lan \n_{E_j}E_1,E_3\ran B_{E_3E_3}^\bot-\lan \n_{E_j}E_1,E_2\ran B_{E_2E_3}^\bot,\\
(\n_{E_j}B)_{E_2E_2}=&\n_{E_j}B_{E_2E_2}+\sqrt{2}\mu_2\lan \n_{E_j}E_1,E_2\ran \nu_2-2\lan \n_{E_j}E_2,E_3\ran B_{E_2E_3}^\bot,\\
(\n_{E_j}B)_{E_3E_3}=&\n_{E_j}B_{E_3E_3}+\sqrt{2}\mu_2\lan \n_{E_j}E_1,E_3\ran \nu_3+2\lan \n_{E_j}E_2,E_3\ran B_{E_2E_3}^\bot.
\endaligned
\end{equation}
By $h_{12j}^1=h_{13j}^1=0$, we have
\begin{equation}\label{CaseI.1}
\lan \n_{E_j}\nu_1,\nu_2\ran=\f{\sqrt{3}\mu_1}{\mu_2}\lan \n_{E_j}E_1,E_2\ran
\end{equation}
and
\begin{equation}\label{CaseI.2}
\lan \n_{E_j}\nu_1,\nu_3\ran=\f{\sqrt{3}\mu_1}{\mu_2}\lan \n_{E_j}E_1,E_3\ran.
\end{equation}
Combining $h_{121}^2=h_{112}^2$ (a corollary of Codazzi equations) and (\ref{CaseI.1}) yields
$$0=\f{2\mu_1}{\sqrt{6}}\lan \n_{E_2}\nu_1,\nu_2\ran-\sqrt{2}\mu_2\lan \n_{E_2}E_1,E_2\ran=\f{\sqrt{2}(\mu_1^2-\mu_2^2)}{\mu_2}\lan \n_{E_2}E_1,E_2\ran,$$
i.e.
\begin{equation}\label{CaseI.10}
\lan \n_{E_2}E_1,E_2\ran=0.
\end{equation}
Similarly, in conjunction with $h_{131}^3=h_{113}^3$ and (\ref{CaseI.2}, we get
\begin{equation}\label{CaseI.18}
\lan \n_{E_3}E_1,E_3\ran=0.
\end{equation}
Due to $h_{123}^2=h_{132}^2$, we have
$$0=\f{\mu_2}{\sqrt{2}}\lan \n_{E_2}\nu_3,\nu_2\ran+\f{\mu_2}{\sqrt{2}}\lan \n_{E_2}E_2,E_3\ran,$$
i.e.
\begin{equation}
\lan \n_{E_2}\nu_2,\nu_3\ran=\lan \n_{E_2}E_2,E_3\ran.
\end{equation}
Similarly, $h_{132}^3=h_{123}^3$ immediately gives
\begin{equation}\label{CaseI.3}
\lan \n_{E_3}\nu_2,\nu_3\ran=\lan \n_{E_3}E_2,E_3\ran.
\end{equation}
$h_{122}^2=h_{221}^2$ means
$$0=\lan \n_{E_1}B_{E_2E_2},\nu_2\ran+\sqrt{2}\mu_2\lan \n_{E_1}E_1,E_2\ran$$
i.e.
\begin{equation}\label{CaseI.4}
\lan \n_{E_1}\nu_2,B_{E_2E_2}\ran=\sqrt{2}\mu_2\lan \n_{E_1}E_1,E_2\ran.
\end{equation}
On the other hand, combining $h_{331}^2=h_{133}^2$ and (\ref{CaseI.3}) gives
$$\lan \n_{E_1}B_{E_3E_3},\nu_2\ran=\f{\mu_2}{\sqrt{2}}\lan \n_{E_3}\nu_3,\nu_2\ran+\f{\mu_2}{\sqrt{2}}\lan \n_{E_3}E_2,E_3\ran=0$$
i.e.
\begin{equation}\label{CaseI.5}
\lan \n_{E_1}\nu_2,B_{E_3E_3}\ran=0.
\end{equation}
Adding both hand sides of (\ref{CaseI.4}) and (\ref{CaseI.5}), we get
$$\aligned
\sqrt{2}\mu_2\lan \n_{E_1}E_1,E_2\ran=&\lan \n_{E_1}\nu_2,B_{E_2E_2}+B_{E_3E_3}\ran=-\lan \n_{E_1}\nu_2,B_{E_1E_1}\ran\\
=&\f{2\mu_1}{\sqrt{6}}\lan \n_{E_1}\nu_1,\nu_2\ran=\f{\sqrt{2}\mu_1^2}{\mu_2}\lan \n_{E_1}E_1,E_2\ran.
\endaligned$$
Thus
\begin{equation}\label{CaseI.6}
\lan \n_{E_1}E_1,E_2\ran=0.
\end{equation}
Similarly, due to $h_{133}^3=h_{331}^3$ and $h_{221}^3=h_{122}^3$, we can derive
\begin{equation}\label{CaseI.7}
\lan \n_{E_1}E_1,E_3\ran=0.
\end{equation}
It follows from $h_{121}^3=h_{112}^3$ that
\begin{equation*}\aligned
\frac{\mu_2}{\sqrt{2}}\big(\lan\n_{E_1}\nu_2,\nu_3\ran-\lan \n_{E_1}E_2,E_3\ran\big)=&\f{2\mu_1}{\sqrt{6}}\lan \n_{E_2}\nu_1,\nu_3\ran-\sqrt{2}\mu_2\lan \n_{E_2}E_1,E_3\ran\\
=&\f{\sqrt{2}(\mu_1^2-\mu_2^2)}{\mu_2}\lan \n_{E_2}E_1,E_3\ran,
\endaligned
\end{equation*}
i.e.
\begin{equation}\label{CaseI.8}
\lan\n_{E_1}\nu_2,\nu_3\ran-\lan \n_{E_1}E_2,E_3\ran=\f{2(\mu_1^2-\mu_2^2)}{\mu_2^2}\lan \n_{E_2}E_1,E_3\ran.
\end{equation}
A similar calculation based on $h_{131}^2=h_{113}^2$ gives
\begin{equation}
\lan\n_{E_1}\nu_3,\nu_2\ran-\lan \n_{E_1}E_3,E_2\ran=\f{2(\mu_1^2-\mu_2^2)}{\mu_2^2}\lan \n_{E_3}E_1,E_2\ran.
\end{equation}
Therefore
\begin{equation}\label{CaseI.11}
\lan \n_{E_3}E_1,E_2\ran=-\lan \n_{E_2}E_1,E_3\ran.
\end{equation}

In conjunction with (\ref{CaseI.6}) and (\ref{CaseI.7}), we get
$$\f{\mu_2}{\sqrt{2}}(\n_{E_1}{\nu_2})^\bot=(\n_{E_1}B)_{E_1E_2}^\bot=(\n_{E_2}B)_{E_1E_1}^\bot=\f{2\mu_1}{\sqrt{6}}(\n_{E_2}\nu_1)^\bot$$
by again using (\ref{dB1}), i.e.
\begin{equation}\label{CaseI.9}
(\n_{E_1}{\nu_2})^\bot=\f{2\mu_1}{\sqrt{3}\mu_2}(\n_{E_2}{\nu_1})^\bot.
\end{equation}
Thus
\begin{equation}\label{CaseI.12}\aligned
&\lan \n_{E_2}\nu_1,\n_{E_1}\nu_2\ran=\lan \n_{E_2}\nu_1,\nu_3\ran+\lan \n_{E_1}\nu_2,\nu_3\ran+\lan (\n_{E_2}\nu_1)^\bot,(\n_{E_1}\nu_2)^\bot\ran\\
=&\f{\sqrt{3}\mu_1}{\mu_2}\lan \n_{E_2}E_1,E_3\ran\left(\lan \n_{E_1}E_2,E_3\ran+\f{2(\mu_1^2-\mu_2^2)}{\mu_2^2}\lan \n_{E_2}E_1,E_3\ran\right)+\lan (\n_{E_2}\nu_1)^\bot,(\n_{E_1}\nu_2)^\bot\ran\\
=&\f{\sqrt{3}\mu_1}{\mu_2}\lan \n_{E_2}E_1,\n_{E_1}E_2\ran+\f{2\sqrt{3}\mu_1(\mu_1^2-\mu_2^2)}{\mu_2^3}\lan \n_{E_2}E_1,E_3\ran^2+\f{2\mu_1}{\sqrt{3}\mu_2}|(\n_{E_2}\nu_1)^\bot|^2.
\endaligned
\end{equation}
Here we have used (\ref{CaseI.2}), (\ref{CaseI.8}) and (\ref{CaseI.9}). A similar calculation as above shows
\begin{equation}\label{CaseI.13}
\lan \n_{E_3}\nu_1,\n_{E_1}\nu_3\ran=\f{\sqrt{3}\mu_1}{\mu_2}\lan \n_{E_3}E_1,\n_{E_1}E_3\ran+\f{2\sqrt{3}\mu_1(\mu_1^2-\mu_2^2)}{\mu_2^3}\lan \n_{E_3}E_1,E_2\ran^2+\f{2\mu_1}{\sqrt{3}\mu_2}|(\n_{E_3}\nu_1)^\bot|^2
\end{equation}
On the other hand, substituting (\ref{CaseI.10}) into (\ref{dB1}) gives
\begin{equation}
\aligned
\f{\mu_2}{\sqrt{2}}(\n_{E_2}\nu_2)^\bot=&(\n_{E_2}B)_{E_1E_2}^\bot+\lan \n_{E_2}E_1,E_3\ran B_{E_2E_3}=(\n_{E_1}B)_{E_2E_2}^\bot+\lan \n_{E_2}E_1,E_3\ran B_{E_2E_3}\\
=&(\n_{E_1}B_{E_2E_2})^\bot+\Big(\lan \n_{E_2}E_1,E_3\ran-2\lan \n_{E_1}E_2,E_3\ran\Big) B_{E_2E_3}
\endaligned
\end{equation}
and similarly
\begin{equation}
\f{\mu_2}{\sqrt{2}}(\n_{E_3}\nu_3)^\bot=(\n_{E_1}B_{E_3E_3})^\bot+\Big(\lan \n_{E_3}E_1,E_2\ran+2\lan \n_{E_1}E_2,E_3\ran\Big) B_{E_2E_3}.
\end{equation}
Adding both hand sides of the above equations and then using (\ref{CaseI.11}), we obtain
\begin{equation}\label{CaseI.14}
(\n_{E_2}\nu_2)^\bot+(\n_{E_3}\nu_3)^\bot=\f{\sqrt{2}}{\mu_2}(\n_{E_1}B_{E_1E_1})^\bot=\f{2\mu_1}{\sqrt{3}\mu_2}(\n_{E_1}\nu_1)^\bot.
\end{equation}
Therefore
\begin{equation}
\aligned
&\lan \n_{E_1}\nu_1,\n_{E_2}\nu_2\ran+\lan \n_{E_1}\nu_1,\n_{E_3}\nu_3\ran\\
=&\lan \n_{E_1}\nu_1,\nu_3\ran\lan \n_{E_2}\nu_2,\nu_3\ran+\lan \n_{E_1}\nu_1,\nu_2\ran\lan \n_{E_3}\nu_3,\nu_2\ran+\lan (\n_{E_1}\nu_1)^\bot,(\n_{E_2}\nu_2)^\bot+(\n_{E_3}\nu_3)^\bot\ran\\
=&-\f{2\mu_1}{\sqrt{3}\mu_2}|(\n_{E_1}\nu_1)^\bot|^2.
\endaligned
\end{equation}
(Note that here $\lan \n_{E_1}\nu_1,\nu_3\ran=0$ and $\lan \n_{E_1}\nu_1,\nu_2\ran=0$ are direct corollaries of (\ref{CaseI.1}), (\ref{CaseI.2}),
(\ref{CaseI.6}) and (\ref{CaseI.7}).)

Let
\begin{equation}
R_{ijkl}:=\lan R_{E_iE_j}E_k,E_l\ran
\end{equation}
and
\begin{equation}
R_{ij\a\be}^N:=\lan R_{E_iE_j}\nu_\a,\nu_\be\ran.
\end{equation}
Then
\begin{equation}\label{CaseI.15}\aligned
\sum_{i=2}^3 R_{1i1i}^N
=&\sum_{i=2}^3 \lan -\n_{E_1}\n_{E_i}\nu_1+\n_{E_i}\n_{E_1}\nu_1+\n_{[E_1,E_i]}\nu_1,\nu_i\ran\\
=&\sum_{i=2}^3 \Big(- \n_{E_1}\lan \n_{E_i}\nu_1,\nu_i\ran+\n_{E_i}\lan \n_{E_1}\nu_1,\nu_i\ran+\n_{[E_1,E_i]}\nu_1,\nu_i\ran\Big)\\
&+\sum_{i=2}^3 \Big(\lan \n_{E_i}\nu_1,\n_{E_1}\nu_i\ran-\lan \n_{E_1}\nu_1,\n_{E_i}\nu_i\ran\Big)\\
=&\f{\sqrt{3}\mu_1}{\mu_2}\sum_{i=2}^3 R_{1i1i}+\sum_{i=2}^3 \Big(-\f{\sqrt{3}\mu_1}{\mu_2}\lan \n_{E_i}E_1,\n_{E_1}E_i\ran+\f{\sqrt{3}\mu_1}{\mu_2}\lan \n_{E_1}E_1,\n_{E_i}E_i\ran\\
&+\lan \n_{E_i}\nu_1,\n_{E_1}\nu_i\ran-\lan \n_{E_1}\nu_1,\n_{E_i}\nu_i\ran\Big)\\
=&\f{\sqrt{3}\mu_1}{\mu_2}\sum_{i=2}^3 R_{1i1i}+\f{4\sqrt{3}\mu_1(\mu_1^2-\mu_2^2)}{\mu_2^3}\lan \n_{E_2}E_1,E_3\ran^2+\f{2\mu_1}{\sqrt{3}\mu_2}\sum_{j=1}^3 |(\n_{E_j}\nu_1)^\bot|^2.
\endaligned
\end{equation}
Here we have used (\ref{CaseI.1}), (\ref{CaseI.2}), (\ref{CaseI.6}), (\ref{CaseI.7}), (\ref{CaseI.11}), (\ref{CaseI.12}), (\ref{CaseI.13}) and (\ref{CaseI.14}).
According to Gauss equations and Ricci equations, we have
\begin{equation}\label{CaseI.16}\aligned
R_{1i1i}=&\lan \ol{R}_{E_1E_i}E_1,E_i\ran+\lan B_{E_1E_1},B_{E_iE_i}\ran-\lan B_{E_1E_i},B_{E_1E_i}\ran\\
=&1-\f{\mu_1^2}{3}-\f{\mu_2^2}{2}=\f{\mu_1^2+3\mu_2^2}{3}-\f{\mu_1^2}{3}-\f{\mu_2^2}{2}=\f{\mu_2^2}{2}
\endaligned
\end{equation}
and
\begin{equation}\label{CaseI.17}
R_{1i1i}^N=\lan \ol{R}_{E_1E_i}\nu_1,\nu_i\ran+\lan A^1(E_1),A^i(E_i)\ran-\lan A^1(E_i),A^i(E_1)\ran=\f{\mu_1\mu_2}{2\sqrt{3}}
\end{equation}
for $i=2$ or $3$. Substituting (\ref{CaseI.16}) and (\ref{CaseI.17}) into (\ref{CaseI.15}), we have
\begin{equation}\label{CaseI.19}
\lan \n_{E_2}E_1,E_3\ran=0.
\end{equation}
Combining (\ref{CaseI.10}), (\ref{CaseI.18}), (\ref{CaseI.6}), (\ref{CaseI.7}), (\ref{CaseI.11})
and (\ref{CaseI.19}) implies $\n E_1=0$ and hence $R_{1212}=0$, which caused a contradiction to (\ref{CaseI.16}).

Thereby, $\la_1=\la_2$ holds everywhere on $\Om_1$.
For each $x\in \Om_1$ such that (\ref{c1}) and (\ref{c2}) hold, by Proposition \ref{dim3}, we have $\la_2=\la_3$ and there exist an orthonormal basis $e_1,e_2,e_3$ of $T_x M$
and an orhonormal basis $\nu_1,\cdots,\nu_m$ of $N_x M$, such that
\begin{equation}\aligned
A^1=\f{\mu_1}{\sqrt{6}}\text{diag}(2,-1,-1),
\quad&
A^2=\f{\mu_2}{\sqrt{2}}(E_{12}+E_{21})\\
A^3=\f{\mu_2}{\sqrt{2}}(E_{13}+E_{31}),\quad&
A^4=\f{\mu_4}{\sqrt{2}}(E_{23}+E_{32}),\\
A^5=\f{\mu_5}{\sqrt{2}}\text{diag}(0,-1,1),\quad&
A^\a=0\ (\forall \a\geq 6).
\endaligned
\end{equation}
A direct calculation shows
\begin{equation}\aligned
&[A^2,A^1]=-\f{\sqrt{3}\mu^2}{2}(E_{12}-E_{21}),\quad [A^2,A^3]=\f{\mu^2}{2}(E_{23}-E_{32}),\\
&[A^2,A^4]=\f{\mu\mu_4}{2}(E_{13}-E_{31}),\quad [A^2,A^5]=-\f{\mu\mu_5}{2}(E_{12}-E_{21})
\endaligned
\end{equation}
with $\mu:=\mu_1=\mu_2$ and hence
\begin{equation}
\sum_\be \|[A^2,A^\be]\|^2=2\la^2+\f{\la}{2}(\la_4+\la_5)<3\la^2=\la_2\left(\sum_{\be\leq 3}\la_\be-\la_2+\la_2\right)
\end{equation}
with $\la:=\la_1=\la_2=\la_3$, causing a contradiction to (\ref{c1}).

Therefore, $\Om_1$ has to be an empty set. Equivalently, we arrive at the following conclusion:
\begin{lem}\label{CaseI}
Let $M$ be a 3-dimensional compact SCDK-ideal manifold in $S^{3+m}$, then $\la_3=\la_4$ holds everywhere on $M$.
\end{lem}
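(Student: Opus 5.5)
The plan is to show $\Om_1=\emptyset$ by contradiction, following the argument laid out just above; the lemma is the reformulation of that emptiness. Since $\la_3>\la_4$ is an open condition, $\Om_1$ is open. The proof splits according to whether $\la_1>\la_2$ somewhere in $\Om_1$ or $\la_1=\la_2$ throughout $\Om_1$.

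\emph{First case: $\la_1>\la_2$ at some $x\in\Om_1$.} Then $r=1$ on a neighborhood $U_x$, so (\ref{c1}) and (\ref{c2}) hold there, and the equality in (\ref{ineq2}) holds at every point of $U_x$. Proposition \ref{dim3}, Case (I), gives the following normal form.
\begin{itemize}
\item $\la_2=\la_3$.
\item There is a smooth adapted frame $E_1,E_2,E_3$, $\nu_1,\nu_2,\nu_3$ with $A^1=\f{\mu_1}{\sqrt6}\mathrm{diag}(2,-1,-1)$, $A^2=\f{\mu_2}{\sqrt2}(E_{12}+E_{21})$ and $A^3=\f{\mu_2}{\sqrt2}(E_{13}+E_{31})$.
\item All other normal directions annihilate $E_1$.
\end{itemize}
From (\ref{c2}) we get $h^1_{ijk}=0$, which forces $\mu_1$ to be constant. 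The SCDK identity $\mu_1^2+3\mu_2^2=3$ then makes $\mu_2$ constant.

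Next, combine $h^1_{ijk}=0$ with the Codazzi symmetries $h^\a_{ijk}=h^\a_{ikj}$, using the explicit formulas (\ref{dB1}) for $\n B$ in this frame. This should show that every connection coefficient $\lan\n_{E_j}E_1,E_k\ran$ vanishes except possibly $\lan\n_{E_2}E_1,E_3\ran=-\lan\n_{E_3}E_1,E_2\ran$. It also ties the normal connection forms $\lan\n\nu_1,\nu_{2,3}\ran$ to the tangent ones by the factor $\sqrt3\mu_1/\mu_2$.

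Then compute $\sum_{i=2,3}R^N_{1i1i}$ directly from these relations and compare it with the Gauss and Ricci equations. Those give $R_{1i1i}=\mu_2^2/2$ and $R^N_{1i1i}=\mu_1\mu_2/(2\sqrt3)$. The direct computation expresses $\sum_i R^N_{1i1i}$ as $\f{\sqrt3\mu_1}{\mu_2}\sum_i R_{1i1i}$ plus a combination of squares with nonnegative coefficients (since $\mu_1\ge\mu_2$). Because $\f{\sqrt3\mu_1}{\mu_2}\cdot\mu_2^2>\f{\mu_1\mu_2}{\sqrt3}$, the identity can only balance if the remaining terms are nonpositive, and in particular $\lan\n_{E_2}E_1,E_3\ran=0$. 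Hence $\n E_1=0$, so $R_{1212}=0$, which contradicts $R_{1212}=\mu_2^2/2>0$.

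\emph{Second case: $\la_1=\la_2$ on all of $\Om_1$.} Take a point where (\ref{c1}) holds; such points are dense. Case (I) of Proposition \ref{dim3} then forces $\la_1=\la_2=\la_3=:\la$, with $A^4$ and $A^5$ as in that normal form. Now apply (\ref{c1}) with $\a=2\le r$ and compute $\sum_\be\|[A^2,A^\be]\|^2=2\la^2+\f{\la}{2}(\la_4+\la_5)$ directly. Since $\la_4,\la_5\le\la_4<\la$, this is strictly less than $3\la^2$, which is the value that (\ref{c1}) requires. This contradiction shows $\Om_1=\emptyset$, that is, $\la_3=\la_4$ everywhere.

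The main obstacle is the first case, namely the Codazzi bookkeeping that yields the exact identity for $\sum_i R^N_{1i1i}$. One has to track the components $(\cdot)^\bot$ outside $\mathrm{span}\{\nu_1,\nu_2,\nu_3\}$, using $(\n_{E_1}\nu_2)^\bot\propto(\n_{E_2}\nu_1)^\bot$ and the analogous trace relation for $(\n_{E_2}\nu_2)^\bot+(\n_{E_3}\nu_3)^\bot$, so that every leftover term comes with a definite sign.
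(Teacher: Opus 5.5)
Your plan is the paper's own proof of this lemma, step for step. It has the same split of $\Om_1$ into $\la_1>\la_2$ somewhere versus $\la_1=\la_2$ throughout, the same Codazzi bookkeeping from $h^1_{ijk}=0$ leading to the identity (\ref{CaseI.15}) for $\sum_{i=2,3}R^N_{1i1i}$, the same conclusion $\n E_1=0$ contradicting $R_{1212}=\mu_2^2/2>0$, and the same bracket count $2\la^2+\f{\la}{2}(\la_4+\la_5)<3\la^2$ in the second case.

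One step is wrong as written: the balancing argument in the first case. The Ricci equation gives $R^N_{1i1i}=\lan A^1E_1,A^iE_i\ran-\lan A^1E_i,A^iE_1\ran=\f{\mu_1\mu_2}{\sqrt3}+\f{\mu_1\mu_2}{2\sqrt3}=\f{\sqrt3}{2}\mu_1\mu_2$, not $\f{\mu_1\mu_2}{2\sqrt3}$. You copied the latter from (\ref{CaseI.17}), where it is a misprint. Hence $\sum_i R^N_{1i1i}=\sqrt3\mu_1\mu_2=\f{\sqrt3\mu_1}{\mu_2}\sum_i R_{1i1i}$ exactly, and your strict inequality is false.

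The inference is also incoherent on its own terms. A strict gap with only nonnegative leftover terms would already be a contradiction. Then neither ``the remaining terms are nonpositive'' nor the subsequent $\n E_1=0$ step would make sense.

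The correct mechanism is exact cancellation. The leftover $\f{4\sqrt3\mu_1(\mu_1^2-\mu_2^2)}{\mu_2^3}\lan\n_{E_2}E_1,E_3\ran^2+\f{2\mu_1}{\sqrt3\mu_2}\sum_j|(\n_{E_j}\nu_1)^\bot|^2$ must vanish. Because $\mu_1>\mu_2$ \emph{strictly} in this case, this forces $\lan\n_{E_2}E_1,E_3\ran=0$; your ``since $\mu_1\geq\mu_2$'' gives nonnegativity but would not be enough here. From there the rest of your argument goes through unchanged: $\n E_1=0$, hence $R_{1212}=0$, contradicting $R_{1212}=\mu_2^2/2$ with $\mu_2^2=\la_3>\la_4\ge 0$.
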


\subsection{Case II: $\la_2>\la_3$}

Denote
\begin{equation}
\aligned
D_1:=&\{x\in \Om_2:\la_1>\la_2\},\\
D_2:=&\{x\in D_1:\la_3>0\},\\
D_3:=&\{x\in D_2:\la_5>0\}.
\endaligned
\end{equation}
Then $\Om_2\supset D_1\supset D_2\supset D_3$ and all of them are open subsets of $M$.
For any $x\in D_3$, (\ref{c1}) and Proposition \ref{dim3} ensures the existences of
an orthonormal tangent frame field $E_1,E_2,E_3$ and an orthonormal normal frame field
$\nu_1,\cdots,\nu_m$ on a neighborhood $U_x$ of $x$, such that
$A^1=\f{\mu_1}{\sqrt{2}}\mathrm{diag}(1,-1,0)$, $A^2=\f{\mu_2}{\sqrt{2}}(E_{12}+E_{21})$,
$A^3=\f{\mu_3}{\sqrt{2}}(E_{13}+E_{31})$, $A^4=\f{\mu_3}{\sqrt{2}}(E_{23}+E_{32})$,
$A^5=\f{\mu_5}{\sqrt{6}}\mathrm{diag}(-1,-1,2)$ and $A^\a=0$ for each $\a\geq 6$.
In other words,
\begin{equation}\label{B2}\aligned
B_{E_1E_1}=\f{\mu_1}{\sqrt{2}}\nu_1-\f{\mu_5}{\sqrt{6}}\nu_5,\ &B_{E_1E_2}=\f{\mu_2}{\sqrt{2}}\nu_2,\ B_{E_1E_3}=\f{\mu_3}{\sqrt{2}}\nu_3,\\
B_{E_2E_2}=-\f{\mu_1}{\sqrt{2}}\nu_1-\f{\mu_5}{\sqrt{6}}\nu_5,\ &B_{E_2E_3}=\f{\mu_3}{\sqrt{2}}\nu_4,\ B_{E_3E_3}=\f{2\mu_5}{\sqrt{6}}\nu_5.
\endaligned
\end{equation}
On the other hand, (\ref{c2}) means
\begin{equation}
h_{ijk}^1=0\qquad \forall 1\leq i,j,k\leq 3.
\end{equation}

In conjunction with
\begin{equation}
0=h_{11j}^1=\lan (\n_{E_j}B)_{E_1E_1},\nu_1\ran=\f{1}{\sqrt{2}}\n_{E_j}(\mu_1)+\f{\mu_5}{\sqrt{6}}\lan \n_{E_j}\nu_1,\nu_5\ran
\end{equation}
and
\begin{equation}
0=h_{22j}^1=\lan (\n_{E_j}B)_{E_2E_2},\nu_1\ran=-\f{1}{\sqrt{2}}\n_{E_j}(\mu_1)+\f{\mu_5}{\sqrt{6}}\lan \n_{E_j}\nu_1,\nu_5\ran,
\end{equation}
we have
\begin{equation}\label{CaseII.32}
\lan \n_{E_j}\nu_1,\nu_5\ran=0
\end{equation}
and
\begin{equation}
\mu_1\equiv \text{const}.
\end{equation}
Combining with
\begin{equation}\label{CaseII.2}
3=\sum_{\a=1}^3 \la_\a+\la_2=\la_1+2\la_2+\la_3,
\end{equation}
we get
\begin{equation}\label{CaseII.10}
2 \n_{E_j}\la_2+\n_{E_j}\la_3=0.
\end{equation}
Then a straightforward calculation based on (\ref{B2}) shows
\begin{equation}\label{dB2}\aligned
(\n_{E_j}B)_{E_1E_1}=&\f{\mu_1}{\sqrt{2}}\n_{E_j}\nu_1-\f{1}{\sqrt{6}}\n_{E_j}(\mu_5\nu_5)-\sqrt{2}\mu_2\lan \n_{E_j} E_1,E_2\ran \nu_2-\sqrt{2}\mu_3 \lan \n_{E_j}E_1,E_3\ran \nu_3,\\
(\n_{E_j}B)_{E_2E_2}=&-\f{\mu_1}{\sqrt{2}}\n_{E_j}\nu_1-\f{1}{\sqrt{6}}\n_{E_j}(\mu_5\nu_5)+\sqrt{2}\mu_2\lan \n_{E_j} E_1,E_2\ran \nu_2-\sqrt{2}\mu_3 \lan \n_{E_j}E_2,E_3\ran \nu_4,\\
(\n_{E_j}B)_{E_1E_2}=&\f{1}{\sqrt{2}}\n_{E_j}(\mu_2\nu_2)+\sqrt{2}\mu_1\lan \n_{E_j}E_1,E_2\ran \nu_1-\f{\mu_3}{\sqrt{2}}\lan \n_{E_j}E_2,E_3\ran \nu_3-\f{\mu_3}{\sqrt{2}}\lan \n_{E_j}E_1,E_3\ran \nu_4,\\
(\n_{E_j}B)_{E_1E_3}=&\f{1}{\sqrt{2}}\n_{E_j}(\mu_3\nu_3)+\f{\mu_1}{\sqrt{2}}\lan \n_{E_j}E_1,E_3\ran \nu_1+\f{\mu_2}{\sqrt{2}}\lan \n_{E_j}E_2,E_3\ran \nu_2\\
                     &-\f{\mu_3}{\sqrt{2}}\lan \n_{E_j}E_1,E_2\ran \nu_4-\f{3\mu_5}{\sqrt{6}}\lan \n_{E_j}E_1,E_3\ran \nu_5,\\
(\n_{E_j}B)_{E_2E_3}=&\f{1}{\sqrt{2}}\n_{E_j}(\mu_3\nu_4)-\f{\mu_1}{\sqrt{2}}\lan \n_{E_j}E_2,E_3\ran \nu_1+\f{\mu_2}{\sqrt{2}}\lan \n_{E_j}E_1,E_3\ran \nu_2\\
                     &+\f{\mu_3}{\sqrt{2}}\lan \n_{E_j}E_1,E_2\ran \nu_3-\f{3\mu_5}{\sqrt{6}}\lan \n_{E_j}E_2,E_3\ran \nu_5,\\
(\n_{E_j}B)_{E_3E_3}=&\f{2}{\sqrt{6}}\n_{E_j}(\mu_5\nu_5)+\sqrt{2}\mu_3\lan \n_{E_j}E_1,E_3\ran \nu_3+\sqrt{2}\mu_3\lan \n_{E_j}E_2,E_3\ran \nu_4.
\endaligned
\end{equation}
By $h_{12j}^1=h_{13j}^1=h_{23j}^1=0$, we have
\begin{equation}\label{CaseII.1}
\aligned
\lan \n_{E_j}\nu_1,\mu_2 \nu_2\ran=&2\mu_1\lan \n_{E_j}E_1,E_2\ran,\\
\lan \n_{E_j}\nu_1,\mu_3 \nu_3\ran=&\mu_1\lan \n_{E_j}E_1,E_3\ran,\\
\lan \n_{E_j}\nu_1,\mu_3 \nu_4\ran=&-\mu_1\lan \n_{E_j}E_2,E_3\ran.
\endaligned
\end{equation}

$h_{121}^2=h_{112}^2$ says
$$\f{1}{\sqrt{2}}\n_{E_1}(\mu_2)=\f{\mu_1}{\sqrt{2}}\lan \n_{E_2}\nu_1,\nu_2\ran-\f{1}{\sqrt{6}}\lan \n_{E_2}(\mu_5\nu_5),\nu_2\ran-\sqrt{2}\mu_2\lan \n_{E_2}E_1,E_2\ran.$$
In conjunction with (\ref{CaseII.1}), we get
\begin{equation}\label{CaseII.8}
\n_{E_1}(\la_2)=4(\la_1-\la_2)\lan \n_{E_2}E_1,E_2\ran+\f{2}{\sqrt{3}}\lan \n_{E_2}(\mu_2 \nu_2),\mu_5\nu_5\ran.
\end{equation}
Similarly, due to $h_{131}^3=h_{113}^3$, $h_{221}^5=h_{122}^5$, $h_{331}^5=h_{133}^5$, $h_{122}^2=h_{221}^2$, $h_{232}^4=h_{223}^4$, $h_{112}^5=h_{121}^5$, $h_{332}^5=h_{233}^5$, $h_{133}^3=h_{331}^3$,
$h_{233}^4=h_{332}^4$, $h_{113}^5=h_{131}^5$ and $h_{223}^5=h_{232}^5$,  we can derive
\begin{eqnarray}
&&\n_{E_1}(\la_3)=2(\la_1-2\la_3)\lan \n_{E_3}E_1,E_3\ran+\f{2}{\sqrt{3}}\lan \n_{E_3}(\mu_3 \nu_3),\mu_5\nu_5\ran,\label{CaseII.9}\\
&&\n_{E_1}(\la_5)=-2\sqrt{3}\lan \n_{E_2}(\mu_2\nu_2),\mu_5\nu_5\ran,\label{CaseII.3}\\
&&\n_{E_1}(\la_5)=-3\la_5\lan \n_{E_3}E_1,E_3\ran+\sqrt{3}\lan \n_{E_3}(\mu_3\nu_3),\mu_5\nu_5\ran,\label{CaseII.4}\\
&&\n_{E_2}(\la_2)=-4(\la_1-\la_2)\lan \n_{E_1}E_1,E_2\ran+\f{2}{\sqrt{3}}\lan \n_{E_1}(\mu_2 \nu_2),\mu_5\nu_5\ran,\label{CaseII.11}\\
&&\n_{E_2}(\la_3)=2(\la_1-2\la_3)\lan \n_{E_3}E_2,E_3\ran+\f{2}{\sqrt{3}}\lan \n_{E_3}(\mu_3 \nu_4),\mu_5\nu_5\ran,\label{CaseII.12}\\
&&\n_{E_2}(\la_5)=-2\sqrt{3}\lan \n_{E_1}(\mu_2\nu_2),\mu_5\nu_5\ran,\label{CaseII.13}\\
&&\n_{E_2}(\la_5)=-3\la_5\lan \n_{E_3}E_2,E_3\ran+\sqrt{3}\lan \n_{E_3}(\mu_3\nu_4),\mu_5\nu_5\ran,\label{CaseII.14}\\
&&\n_{E_3}(\la_3)=4\la_3 \lan \n_{E_1}E_1,E_3\ran-\f{4}{\sqrt{3}}\lan \n_{E_1}(\mu_3\nu_3),\mu_5\nu_5\ran,\label{CaseII.18}\\
&&\n_{E_3}(\la_3)=4\la_3 \lan \n_{E_2}E_2,E_3\ran-\f{4}{\sqrt{3}}\lan \n_{E_2}(\mu_3\nu_4),\mu_5\nu_5\ran,\label{CaseII.19}\\
&&\n_{E_3}(\la_5)=6\la_5 \lan \n_{E_1}E_1,E_3\ran-\f{6}{\sqrt{3}}\lan \n_{E_1}(\mu_3\nu_3),\mu_5\nu_5\ran,\label{CaseII.20}\\
&&\n_{E_3}(\la_5)=6\la_5 \lan \n_{E_2}E_2,E_3\ran-\f{6}{\sqrt{3}}\lan \n_{E_2}(\mu_3\nu_4),\mu_5\nu_5\ran,\label{CaseII.21}
\end{eqnarray}	
by using (\ref{CaseII.1}) and (\ref{CaseII.2}).
On the other hand, $h_{231}^4=h_{123}^4=h_{132}^4$, $h_{132}^3=h_{123}^3=h_{231}^3$, $h_{123}^2=h_{132}^2=h_{231}^2$  and $h_{123}^5=h_{132}^5=h_{231}^5$ enable us to deduce
\begin{equation}\label{CaseII.7}\aligned
\n_{E_1}(\la_3)=&-2\la_3\lan \n_{E_3}E_1,E_3\ran+2\lan \n_{E_3}(\mu_2\nu_2),\mu_3\nu_4\ran\\
=&-2\la_3\lan \n_{E_2}E_1,E_2\ran+2\lan \n_{E_2}(\mu_3\nu_3),\mu_3\nu_4\ran,
\endaligned
\end{equation}
\begin{equation}\label{CaseII.15}\aligned
\n_{E_2}(\la_3)=&-2\la_3\lan \n_{E_3}E_2,E_3\ran+2\lan \n_{E_3}(\mu_2\nu_2),\mu_3\nu_3\ran\\
=&2\la_3\lan \n_{E_1}E_1,E_2\ran-2\lan \n_{E_1}(\mu_3\nu_3),\mu_3\nu_4\ran,
\endaligned
\end{equation}
\begin{equation}\label{CaseII.22}\aligned
\n_{E_3}(\la_2)=&2\la_2\lan \n_{E_2}E_2,E_3\ran-2\lan \n_{E_2}(\mu_2\nu_2),\mu_3\nu_3\ran\\
=&2\la_2\lan \n_{E_1}E_1,E_3\ran-2\lan \n_{E_1}(\mu_2\nu_2),\mu_3\nu_4\ran,
\endaligned
\end{equation}
and
\begin{equation}\label{CaseII.25}\aligned
\f{1}{\sqrt{3}}\lan \n_{E_3}(\mu_2\nu_2),\mu_5\nu_5\ran=&\f{1}{\sqrt{3}}\lan \n_{E_2}(\mu_3\nu_3),\mu_5\nu_5\ran-\la_5\lan \n_{E_2}E_1,E_3\ran\\
=&\f{1}{\sqrt{3}}\lan \n_{E_1}(\mu_3\nu_4),\mu_5\nu_5\ran-\la_5\lan \n_{E_1}E_2,E_3\ran.
\endaligned
\end{equation}
Afterwards, it follows from $h_{223}^3=h_{232}^3$, $h_{332}^2=h_{233}^2$, $h_{113}^4=h_{131}^4$, $h_{331}^2=h_{133}^2$, $h_{112}^4=h_{121}^4$, $h_{221}^3=h_{122}^3$, $h_{112}^3=h_{121}^3$, $h_{113}^2=h_{311}^2$,
$h_{221}^4=h_{122}^4$, $h_{223}^2=h_{232}^2$, $h_{331}^4=h_{133}^4$ and $h_{332}^3=h_{233}^3$ that
\begin{eqnarray}
\lan \n_{E_2}(\mu_3\nu_3),\mu_3\nu_4\ran+\f{1}{\sqrt{3}}\lan \n_{E_3}(\mu_3\nu_3),\mu_5\nu_5\ran=\la_1\lan \n_{E_3}E_1,E_3\ran+\la_3\lan \n_{E_2}E_1,E_2\ran,\label{CaseII.5}\\
\lan \n_{E_3}(\mu_2\nu_2),\mu_3\nu_4\ran-\f{2}{\sqrt{3}}\lan \n_{E_2}(\mu_2\nu_2),\mu_5\nu_5\ran=\la_2\lan \n_{E_3}E_1,E_3\ran,\label{CaseII.6}\\
\lan \n_{E_1}(\mu_3\nu_3),\mu_3\nu_4\ran-\f{1}{\sqrt{3}}\lan \n_{E_3}(\mu_3\nu_4),\mu_5\nu_5\ran=-\la_1\lan \n_{E_3}E_2,E_3\ran+\la_3\lan \n_{E_1}E_1,E_2\ran,\label{CaseII.16}\\
\lan \n_{E_3}(\mu_2\nu_2),\mu_3\nu_3\ran-\f{2}{\sqrt{3}}\lan \n_{E_1}(\mu_2\nu_2),\mu_5\nu_5\ran=\la_2\lan \n_{E_3}E_2,E_3\ran,\label{CaseII.17}\\
\lan \n_{E_1}(\mu_2\nu_2),\mu_3\nu_4\ran-\f{1}{\sqrt{3}}\lan \n_{E_2}(\mu_3\nu_4),\mu_5\nu_5\ran=-\la_1\lan \n_{E_2}E_2,E_3\ran+\la_3\lan \n_{E_1}E_1,E_3\ran,\label{CaseII.23}\\
\lan \n_{E_2}(\mu_2\nu_2),\mu_3\nu_3\ran-\f{1}{\sqrt{3}}\lan \n_{E_1}(\mu_3\nu_3),\mu_5\nu_5\ran=-\la_1\lan \n_{E_1}E_1,E_3\ran+\la_3\lan \n_{E_2}E_2,E_3\ran,\label{CaseII.24}\\
\lan \n_{E_1}(\mu_2\nu_2),\mu_3\nu_3\ran-\f{1}{\sqrt{3}}\lan \n_{E_2}(\mu_3\nu_3),\mu_5\nu_5\ran=\la_3\lan \n_{E_1}E_2,E_3\ran+(\la_1-2\la_3)\lan \n_{E_2}E_1,E_3\ran,\label{CaseII.26}\\
\lan \n_{E_1}(\mu_2\nu_2),\mu_3\nu_3\ran+\f{1}{\sqrt{3}}\lan \n_{E_3}(\mu_2\nu_2),\mu_5\nu_5\ran=\la_2\lan \n_{E_1}E_2,E_3\ran-2(\la_1-\la_2)\lan \n_{E_3}E_1,E_2\ran,\label{CaseII.27}\\
\lan \n_{E_2}(\mu_2\nu_2),\mu_3\nu_4\ran-\f{1}{\sqrt{3}}\lan \n_{E_1}(\mu_3\nu_4),\mu_5\nu_5\ran=\la_3\lan \n_{E_2}E_1,E_3\ran+(\la_1-2\la_3)\lan \n_{E_1}E_2,E_3\ran,\label{CaseII.28}\\
\lan \n_{E_2}(\mu_2\nu_2),\mu_3\nu_4\ran+\f{1}{\sqrt{3}}\lan \n_{E_3}(\mu_2\nu_2),\mu_5\nu_5\ran=\la_2\lan \n_{E_2}E_1,E_3\ran+2(\la_1-\la_2)\lan \n_{E_3}E_1,E_2\ran,\label{CaseII.29}\\
\lan \n_{E_3}(\mu_3\nu_3),\mu_3\nu_4\ran+\f{2}{\sqrt{3}}\lan \n_{E_1}(\mu_3\nu_4),\mu_5\nu_5\ran=2\la_3\lan \n_{E_1}E_2,E_3\ran+\la_3\lan \n_{E_3}E_1,E_2\ran,\label{CaseII.30}\\
\lan \n_{E_3}(\mu_3\nu_3),\mu_3\nu_4\ran-\f{2}{\sqrt{3}}\lan \n_{E_2}(\mu_3\nu_3),\mu_5\nu_5\ran=\la_3\lan \n_{E_3}E_1,E_2\ran-2\la_3\lan \n_{E_2}E_1,E_3\ran.\label{CaseII.31}
\end{eqnarray}
	
Combining (\ref{CaseII.3}) and (\ref{CaseII.4}) implies
$$\la_5\lan \n_{E_3}E_1,E_3\ran=\f{1}{\sqrt{3}}\lan \n_{E_3}(\mu_3\nu_3),\mu_5\nu_5\ran+\f{2}{\sqrt{3}}\lan \n_{E_2}(\mu_2\nu_2),\mu_5\nu_5\ran.$$
Substituting this equation into (\ref{CaseII.5}) and (\ref{CaseII.6}), we get
$$\lan \n_{E_2}(\mu_3\nu_3),\mu_3\nu_4\ran-\lan \n_{E_3}(\mu_2\nu_2),\mu_3\nu_4\ran=(\la_1-\la_2-\la_5)\lan \n_{E_3}E_1,E_3\ran+\la_3\lan \n_{E_2}E_1,E_2\ran.$$
On the other hand, (\ref{CaseII.7}) means
$$\lan \n_{E_2}(\mu_3\nu_3),\mu_3\nu_4\ran-\lan \n_{E_3}(\mu_2\nu_2),\mu_3\nu_4\ran=-\la_3\lan \n_{E_3}E_1,E_3\ran+\la_3\lan \n_{E_2}E_1,E_2\ran.$$
Thus $(\la_1-\la_2+\la_3-\la_5)\lan \n_{E_3}E_1,E_3\ran=0$, i.e.
\begin{equation}\label{CaseII.33}
\lan \n_{E_3}E_1,E_3\ran=0
\end{equation}
(note that $\la_1>\la_2$ and $\la_3\geq \la_5$). Thereby, in conjunction with (\ref{CaseII.10}), (\ref{CaseII.8}), (\ref{CaseII.9}), (\ref{CaseII.7}), (\ref{CaseII.5}) and (\ref{CaseII.6}), we can derive
\begin{equation}
\lan \n_{E_2}E_1,E_2\ran=0,
\end{equation}
\begin{equation}
\lan \n_{E_3}\nu_3,\nu_5\ran=\lan \n_{E_2}\nu_2,\nu_5\ran=0,
\end{equation}
\begin{equation}
\lan \n_{E_2}\nu_3,\nu_4\ran=\lan \n_{E_3}\nu_2,\nu_4\ran=0
\end{equation}
and
\begin{equation}
\n_{E_1}(\la_2)=\n_{E_1}(\la_3)=\n_{E_1}(\la_5)=0.
\end{equation}
Similarly, based on (\ref{CaseII.10}), (\ref{CaseII.11}), (\ref{CaseII.12}), (\ref{CaseII.13}), (\ref{CaseII.14}), (\ref{CaseII.15}), (\ref{CaseII.16}) and (\ref{CaseII.17}),
we obtain
\begin{equation}
\lan \n_{E_1}E_1,E_2\ran=\lan \n_{E_3}E_2,E_3\ran=0,
\end{equation}
\begin{equation}
\lan \n_{E_3}\nu_4,\nu_5\ran=\lan \n_{E_1}\nu_2,\nu_5\ran=0,
\end{equation}
\begin{equation}
\lan \n_{E_1}\nu_3,\nu_4\ran=\lan \n_{E_3}\nu_2,\nu_3\ran=0
\end{equation}
and
\begin{equation}
\n_{E_2}(\la_2)=\n_{E_2}(\la_3)=\n_{E_2}(\la_5)=0.
\end{equation}
Next, combining (\ref{CaseII.10}), (\ref{CaseII.18}), (\ref{CaseII.19}), (\ref{CaseII.20}), (\ref{CaseII.21}), (\ref{CaseII.22}), (\ref{CaseII.23}) and (\ref{CaseII.24}) yields
\begin{equation}
\lan \n_{E_1}E_1,E_3\ran=\lan \n_{E_2}E_2,E_3\ran,
\end{equation}
\begin{equation}
\lan \n_{E_1}(\mu_3\nu_3),\mu_5\nu_5\ran=\lan \n_{E_1}(\mu_3\nu_4),\mu_5\nu_5\ran=\f{\sqrt{3}(\la_1+\la_2)}{2}\lan \n_{E_1}E_1,E_3\ran,
\end{equation}
\begin{equation}
\lan \n_{E_1}(\mu_2\nu_2),\mu_3\nu_4\ran=\lan \n_{E_2}(\mu_2\nu_2),\mu_3\nu_3\ran=\f{\la_2+2\la_3-\la_1}{2}\lan \n_{E_1}E_1,E_3\ran
\end{equation}
and
\begin{equation}\aligned
\n_{E_3}(\la_2)=&(\la_1+\la_2-2\la_3)\lan \n_{E_1}E_1,E_3\ran,\\
\n_{E_3}(\la_3)=&-2(\la_1+\la_2-2\la_3)\lan \n_{E_1}E_1,E_3\ran,\\
\n_{E_3}(\la_5)=&-3(\la_1+\la_2-2\la_5)\lan \n_{E_1}E_1,E_3\ran.
\endaligned
\end{equation}
Similarly, a straightforward calculation based on (\ref{CaseII.25}), (\ref{CaseII.26}), (\ref{CaseII.27}), (\ref{CaseII.28}), (\ref{CaseII.29}), (\ref{CaseII.30}) and (\ref{CaseII.31})
gives
\begin{equation}
\lan \n_{E_2}E_1,E_3\ran=-\lan \n_{E_1}E_2,E_3\ran,
\end{equation}
\begin{equation}
\lan \n_{E_3}E_1,E_2\ran=\f{\la_1+\la_2-3\la_3+\la_5}{2(\la_1-\la_2)}\lan \n_{E_1}E_2,E_3\ran,
\end{equation}
\begin{equation}
\lan \n_{E_3}\nu_2,\nu_5\ran=0,
\end{equation}
\begin{equation}
\lan \n_{E_2}(\mu_3\nu_3),\mu_5\nu_5\ran=-\lan \n_{E_1}(\mu_3\nu_4),\mu_5\nu_5\ran=-\sqrt{3}\la_5\lan \n_{E_1}E_2,E_3\ran
\end{equation}
\begin{equation}
\lan \n_{E_2}(\mu_2\nu_2),\mu_3\nu_4\ran=-\lan \n_{E_1}(\mu_2\nu_2),\mu_3\nu_3\ran=(\la_1-3\la_3+\la_5)\lan \n_{E_1}E_2,E_3\ran
\end{equation}
and
\begin{equation}
\lan \n_{E_3}(\mu_3\nu_3),\mu_3\nu_4\ran=\la_3 \lan \n_{E_3}E_1,E_2\ran+2(\la_3-\la_5)\lan \n_{E_1}E_2,E_3\ran.
\end{equation}

Let $(\cdots)^\bot$ denote the projection on the orthogonal complement of the subspace spanned by $\nu_1,\cdots,\nu_5$,
then again applying (\ref{dB2}) implies
\begin{equation}\aligned
\big(\n_{E_1}(\mu_2\nu_2)\big)^\bot=&\sqrt{2}(\n_{E_1}B)_{E_1E_2}^\bot=\sqrt{2}(\n_{E_2}B)_{E_1E_1}^\bot\\
=&\mu_1(\n_{E_2}\nu_1)^\bot-\f{1}{\sqrt{3}}\big(\n_{E_2}(\mu_5\nu_5)\big)^\bot,
\endaligned
\end{equation}
\begin{equation}\aligned
\big(\n_{E_2}(\mu_2\nu_2)\big)^\bot=&\sqrt{2}(\n_{E_2}B)_{E_1E_2}^\bot=\sqrt{2}(\n_{E_1}B)_{E_2E_2}^\bot\\
=&-\mu_1(\n_{E_1}\nu_1)^\bot-\f{1}{\sqrt{3}}\big(\n_{E_1}(\mu_5\nu_5)\big)^\bot,
\endaligned
\end{equation}
\begin{equation}\aligned
\big(\n_{E_1}(\mu_3\nu_3)\big)^\bot=&\sqrt{2}(\n_{E_1}B)_{E_1E_3}^\bot=\sqrt{2}(\n_{E_3}B)_{E_1E_1}^\bot\\
=&\mu_1(\n_{E_3}\nu_1)^\bot-\f{1}{\sqrt{3}}\big(\n_{E_3}(\mu_5\nu_5)\big)^\bot,
\endaligned
\end{equation}
\begin{equation}\aligned
\big(\n_{E_3}(\mu_3\nu_3)\big)^\bot=&\sqrt{2}(\n_{E_3}B)_{E_1E_3}^\bot=\sqrt{2}(\n_{E_1}B)_{E_3E_3}^\bot\\
=&\f{2}{\sqrt{3}}\big(\n_{E_1}(\mu_5\nu_5)\big)^\bot,
\endaligned
\end{equation}
\begin{equation}\aligned
\big(\n_{E_2}(\mu_3\nu_4)\big)^\bot=&\sqrt{2}(\n_{E_2}B)_{E_2E_3}^\bot=\sqrt{2}(\n_{E_3}B)_{E_2E_2}^\bot\\
=&-\mu_1(\n_{E_3}\nu_1)^\bot-\f{1}{\sqrt{3}}\big(\n_{E_3}(\mu_5\nu_5)\big)^\bot
\endaligned
\end{equation}
and
\begin{equation}\label{CaseII.34}\aligned
\big(\n_{E_3}(\mu_3\nu_4)\big)^\bot=&\sqrt{2}(\n_{E_3}B)_{E_2E_3}^\bot=\sqrt{2}(\n_{E_2}B)_{E_3E_3}^\bot\\
=&\f{2}{\sqrt{3}}\big(\n_{E_2}(\mu_5\nu_5)\big)^\bot.
\endaligned
\end{equation}

In conjunction with (\ref{CaseII.32}), (\ref{CaseII.1}), (\ref{CaseII.33})-(\ref{CaseII.34}), we have
\begin{equation}\aligned
&\f{1}{2\mu_1}\lan \n_{E_2}\nu_1,\n_{E_1}(\mu_2\nu_2)\ran\\
=&\f{1}{2\mu_1}\sum_{\a=2}^5 \f{1}{\la_\a}\lan \n_{E_2}\nu_1,\mu_\a\nu_\a\ran \lan \n_{E_1}(\mu_2\nu_2),\mu_\a \nu_\a\ran+\f{1}{2\mu_1} \lan (\n_{E_2}\nu_1)^\bot,\big(\n_{E_1}(\mu_2\nu_2)\big)^\bot\ran\\
=&\f{\la_1-3\la_3+\la_5}{2\la_3}\lan \n_{E_1}E_2,E_3\ran^2+\f{\la_1-\la_2-2\la_3}{4\la_3}\lan \n_{E_1}E_1,E_3\ran^2\\
&+\f{1}{2}|(\n_{E_2}\nu_1)^\bot|^2-\f{1}{2\sqrt{3}\mu_1}\lan (\n_{E_2}\nu_1)^\bot, \big(\n_{E_2}(\mu_5\nu_5)\big)^\bot\ran,
\endaligned
\end{equation}
\begin{equation}\aligned
&\f{1}{2\mu_1}\lan \n_{E_1}\nu_1,\n_{E_2}(\mu_2\nu_2)\ran\\
=&\f{1}{2\mu_1}\sum_{\a=2}^5 \f{1}{\la_\a}\lan \n_{E_1}\nu_1,\mu_\a\nu_\a\ran \lan \n_{E_2}(\mu_2\nu_2),\mu_\a \nu_\a\ran+\f{1}{2\mu_1} \lan (\n_{E_1}\nu_1)^\bot,\big(\n_{E_2}(\mu_2\nu_2)\big)^\bot\ran\\
=&-\f{\la_1-3\la_3+\la_5}{2\la_3}\lan \n_{E_1}E_2,E_3\ran^2-\f{\la_1-\la_2-2\la_3}{4\la_3}\lan \n_{E_1}E_1,E_3\ran^2\\
&-\f{1}{2}|(\n_{E_1}\nu_1)^\bot|^2-\f{1}{2\sqrt{3}\mu_1}\lan (\n_{E_1}\nu_1)^\bot, \big(\n_{E_1}(\mu_5\nu_5)\big)^\bot\ran,
\endaligned
\end{equation}
and hence
\begin{equation}\label{CaseII.35}\aligned
&\f{1}{2\mu_1}\lan R_{E_1E_2}\nu_1,\mu_2\nu_2\ran=\f{1}{2\mu_1}\lan -\n_{E_1}\n_{E_2}\nu_1+\n_{E_2}\n_{E_1}\nu_1+\n_{[E_1,E_2]}\nu_1,\mu_2\nu_2\ran\\
=&\f{1}{2\mu_1}\Big(-\n_{E_1}\lan \n_{E_2}\nu_1,\mu_2\nu_2\ran+\n_{E_2}\lan \n_{E_1}\nu_1,\mu_2\nu_2\ran+\lan\n_{[E_1,E_2]}\nu_1,\mu_2\nu_2\ran\Big)\\
&+\f{1}{2\mu_1}\lan \n_{E_2}\nu_1,\n_{E_1}(\mu_2\nu_2)\ran-\f{1}{2\mu_1}\lan \n_{E_1}\nu_1,\n_{E_2}(\mu_2\nu_2)\ran\\
=&-\n_{E_1}\lan \n_{E_2}E_1,E_2\ran+\n_{E_2}\lan \n_{E_1}E_1,E_2\ran+\lan\n_{[E_1,E_2]}E_1,E_2\ran\\
&+\f{1}{2\mu_1}\lan \n_{E_2}\nu_1,\n_{E_1}(\mu_2\nu_2)\ran-\f{1}{2\mu_1}\lan \n_{E_1}\nu_1,\n_{E_2}(\mu_2\nu_2)\ran\\
=&R_{1212}-\lan \n_{E_2}E_1,\n_{E_1}E_2\ran+\lan \n_{E_1}E_1,\n_{E_2}E_2\ran+\f{1}{2\mu_1}\lan \n_{E_2}\nu_1,\n_{E_1}(\mu_2\nu_2)\ran-\f{1}{2\mu_1}\lan \n_{E_1}\nu_1,\n_{E_2}(\mu_2\nu_2)\ran\\
=&R_{1212}+\f{\la_1-2\la_3+\la_5}{\la_3}\lan \n_{E_1}E_2,E_3\ran^2+\f{\la_1-\la_2}{2\la_3}\lan \n_{E_1}E_1,E_3\ran^2
+\f{1}{2}(|(\n_{E_2}\nu_1)^\bot|^2+|(\n_{E_1}\nu_1)^\bot|^2)\\
&-\f{1}{2\sqrt{3}\mu_1}\lan (\n_{E_2}\nu_1)^\bot, \big(\n_{E_2}(\mu_5\nu_5)\big)^\bot\ran+\f{1}{2\sqrt{3}\mu_1}\lan (\n_{E_1}\nu_1)^\bot, \big(\n_{E_1}(\mu_5\nu_5)\big)^\bot\ran.
\endaligned
\end{equation}
Similarly, direct calculations show
\begin{equation}\label{CaseII.36}\aligned
&\f{1}{\mu_1}\lan R_{E_1E_3}\nu_1,\mu_3\nu_3\ran\\
=&R_{1313}-\lan \n_{E_3}E_1,\n_{E_1}E_3\ran+\lan \n_{E_1}E_1,\n_{E_3}E_3\ran+\f{1}{\mu_1}\lan \n_{E_3}\nu_1,\n_{E_1}(\mu_3\nu_3)\ran-\f{1}{\mu_1}\lan \n_{E_1}\nu_1,\n_{E_3}(\mu_3\nu_3)\ran\\
=&R_{1313}+\f{4(\la_1-\la_2)}{\la_2}\lan \n_{E_3}E_1,E_2\ran^2+\f{2(\la_3-\la_5)}{\la_3}\lan \n_{E_1}E_2,E_3\ran^2+\f{\la_1+\la_2-2\la_3}{\la_3}\lan \n_{E_1}E_1,E_3\ran^2\\
&+|(\n_{E_3}\nu_1)^\bot|^2-\f{1}{\sqrt{3}\mu_1}\lan (\n_{E_3}\nu_1)^\bot, \big(\n_{E_3}(\mu_5\nu_5)\big)^\bot\ran-\f{2}{\sqrt{3}\mu_1}\lan (\n_{E_1}\nu_1)^\bot, \big(\n_{E_1}(\mu_5\nu_5)\big)^\bot\ran
\endaligned
\end{equation}
and
\begin{equation}\label{CaseII.37}\aligned
&-\f{1}{\mu_1}\lan R_{E_2E_3}\nu_1,\mu_3\nu_4\ran\\
=&R_{2323}-\lan \n_{E_3}E_2,\n_{E_2}E_3\ran+\lan \n_{E_2}E_2,\n_{E_3}E_3\ran-\f{1}{\mu_1}\lan \n_{E_3}\nu_1,\n_{E_2}(\mu_3\nu_4)\ran+\f{1}{\mu_1}\lan \n_{E_2}\nu_1,\n_{E_3}(\mu_3\nu_4)\ran\\
=&R_{2323}+\f{4(\la_1-\la_2)}{\la_2}\lan \n_{E_3}E_1,E_2\ran^2+\f{2(\la_3-\la_5)}{\la_3}\lan \n_{E_1}E_2,E_3\ran^2+\f{\la_1+\la_2-2\la_3}{\la_3}\lan \n_{E_1}E_1,E_3\ran^2\\
&+|(\n_{E_3}\nu_1)^\bot|^2+\f{1}{\sqrt{3}\mu_1}\lan (\n_{E_3}\nu_1)^\bot, \big(\n_{E_3}(\mu_5\nu_5)\big)^\bot\ran+\f{2}{\sqrt{3}\mu_1}\lan (\n_{E_2}\nu_1)^\bot, \big(\n_{E_2}(\mu_5\nu_5)\big)^\bot\ran.
\endaligned
\end{equation}
Combining (\ref{CaseII.35})-(\ref{CaseII.37}), we get
\begin{equation}\label{CaseII.38}
\aligned
&\left(\f{1}{\mu_1}\lan R_{E_1E_2}\nu_1,\mu_2\nu_2\ran-2R_{1212}\right)+\left(\f{1}{2\mu_1}\lan R_{E_1E_3}\nu_1,\mu_3\nu_3\ran-\f{1}{2}R_{1313}\right)\\
&+\left(-\f{1}{2\mu_1}\lan R_{E_2E_3}\nu_1,\mu_3\nu_4\ran-\f{1}{2}R_{2323}\right)\\
=&\f{2(\la_1-\la_3)}{\la_3}\lan \n_{E_1}E_1,E_3\ran^2+\f{2(\la_1-\la_3)}{\la_3}\lan \n_{E_1}E_2,E_3\ran^2+\f{4(\la_1-\la_2)}{\la_2}\lan \n_{E_3}E_1,E_2\ran^2\\
&+\sum_{j=1}^3|(\n_{E_j}\nu_1)^\bot|^2.
\endaligned
\end{equation}

By Gauss equations and Ricci equations, we can derive
\begin{equation}\label{CaseII.39}\aligned
R_{1212}&=\lan \ol{R}_{E_1E_2}E_1,E_2\ran+\lan B_{E_1E_1},B_{E_2E_2}\ran-\lan B_{E_1E_2},B_{E_1E_2}\ran\\
&=1-\f{\la_1}{2}+\f{\la_5}{6}-\f{\la_2}{2},\\
R_{1313}&=\lan \ol{R}_{E_1E_3}E_1,E_3\ran+\lan B_{E_1E_1},B_{E_3E_3}\ran-\lan B_{E_1E_3},B_{E_1E_3}\ran\\
&=1-\f{\la_5}{3}-\f{\la_3}{2},\\
R_{2323}&=\lan \ol{R}_{E_2E_3}E_2,E_3\ran+\lan B_{E_2E_2},B_{E_3E_3}\ran-\lan B_{E_2E_3},B_{E_2E_3}\ran\\
&=1-\f{\la_5}{3}-\f{\la_3}{2},
\endaligned
\end{equation}
\begin{equation}\aligned
\lan R_{E_1E_2}\nu_1,\nu_2\ran=&\lan \ol{R}_{E_1E_2}\nu_1,\nu_2\ran+\lan A^1(E_1),A^2(E_2)\ran-\lan A^1(E_2),A^2(E_1)\ran=\mu_1\mu_2,\\
\lan R_{E_1E_3}\nu_1,\nu_3\ran=&\lan \ol{R}_{E_1E_3}\nu_1,\nu_3\ran+\lan A^1(E_1),A^2(E_3)\ran-\lan A^1(E_3),A^3(E_1)\ran=\f{\mu_1\mu_3}{2},\\
\lan R_{E_2E_3}\nu_1,\nu_4\ran=&\lan \ol{R}_{E_2E_3}\nu_1,\nu_4\ran+\lan A^1(E_2),A^4(E_3)\ran-\lan A^1(E_3),A^4(E_2)\ran=-\f{\mu_1\mu_3}{2}
\endaligned
\end{equation}
and hence
\begin{equation}
\text{The LHS of (\ref{CaseII.38})}=\la_1+2\la_2+\la_3-3=0.
\end{equation}
Thereby
\begin{equation}
\lan \n_{E_1}E_1,E_3\ran=\lan \n_{E_1}E_2,E_3\ran=\lan \n_{E_3}E_1,E_2\ran=0
\end{equation}
and thus $R_{1313}=0$. On the other hand, $\la_1+2\la_2+\la_3=3$ and $\la_1>\la_2>\la_3\geq \la_5$ force
$\la_5\leq \la_3<\f{3}{4}$. Substituting it into (\ref{CaseII.39}) implies $R_{1313}>\f{3}{8}$, causing a contradiction. This means $D_3$
is an empty set, i.e. $\la_5\equiv 0$ on $D_2$. Afterwards, for any $x\in D_2$, one can proceed similarly as above to compute
$\big(\f{1}{\mu_1}\lan R_{E_1E_2}\nu_1,\mu_2\nu_2\ran-2R_{1212}\big)+\big(\f{1}{2\mu_1}\lan R_{E_1E_3}\nu_1,\mu_3\nu_3\ran-\f{1}{2}R_{1313}\big)+\big(-\f{1}{2\mu_1}\lan R_{E_2E_3}\nu_1,\mu_3\nu_4\ran-\f{1}{2}R_{2323}\big)$,
 obtaining a contradiction, so $D_2$ has to be an empty set.

Therefore, $\la_3\equiv 0$ on $D_1$. For any $x\in D_1$, there exists a local orthonormal tangent frame field $E_1,E_2,E_3$ and unit normal fields $\nu_1,\nu_2$
that are orthogonal to each other, such that
\begin{equation}\label{CaseII.49}
\aligned
&B_{E_1E_1}=\f{\mu_1}{\sqrt{2}}\nu_1,\ B_{E_1E_2}=\f{\mu_2}{\sqrt{2}}\nu_2,\ B_{E_2E_2}=-\f{\mu_1}{\sqrt{2}}\nu_1,\\
&B_{E_1E_3}=B_{E_2E_3}=B_{E_3E_3}=0.
\endaligned
\end{equation}
It immediately follows from $h_{11j}^1=0$ that $\n_{E_j}\mu_1=0$, so both $\la_1$ and $\la_2$ are constants on $D_1$.
By $h_{12j}^1=h_{13j}^1=h_{23j}^1=0$, we have
\begin{equation}\label{CaseII.40}
\lan \n_{E_j}\nu_1,\mu_2\nu_2\ran=2\mu_1\lan \n_{E_j}E_1,E_2\ran
\end{equation}
and
\begin{equation}
\lan \n_{E_j}E_1,E_3\ran=\lan\n_{E_j}E_2,E_3\ran=0.
\end{equation}
Due to $h_{121}^2=h_{112}^2$, $h_{122}^2=h_{221}^2$ and $h_{131}^2=h_{113}^2$, in conjunction with (\ref{CaseII.40}), we have
\begin{equation}\label{CaseII.50}
\lan \n_{E_2}E_1,E_2\ran=\lan \n_{E_1}E_1,E_2\ran= \lan \n_{E_3}E_1,E_2\ran=0
\end{equation}
and hence $R_{1313}=0$. On the other hand, the Gauss equations shows $R_{1313}=1$, causing a contradiction.

Therefore $D_1=\emptyset$, i.e. $\la_1=\la_2$ holds everywhere on $\Om_2$. Let
\begin{equation}\aligned
D_4:=\{x\in \Om_2:\la_3>0\},\\
D_5:=\{x\in D_4:\la_5>0\},\\
D_6:=\{x\in D_5:\la_3>\la_5\},
\endaligned
\end{equation}
then $D_6\subset D_5\subset D_4\subset \Om_2$ and all of them are open subsets of $M$.
For any $x\in D_5$, there exists an orthonormal frame field $E_1,E_2,E_3$ and an orthonormal normal
frame field $\nu_1,\cdots,\nu_m$ on a neighborhood of $x$, such that (\ref{B2}) holds and
\begin{equation}
h_{ijk}^1=h_{ijk}^2=0\qquad \forall 1\leq i,j,k\leq 3.
\end{equation}
Note that here, at each considered point, $E_3$ is the unique unit normal vector (up to a sign) satisfying
\begin{equation}
A^\nu(E_3)=0\qquad \forall \nu \text{ s.t. }\lan A^\nu,A^\nu\ran=\la_1|\nu|^2,
\end{equation}
and $E_1,E_2$ can be taken to be an arbitrary orthornormal basis of the orthogonal complement of $E_3$.

Similarly as above, $h_{11j}^1=h_{22j}^1=0$ enable us to derive
\begin{equation}
\lan \n_{E_j}\nu_1,\nu_5\ran=0
\end{equation}
and $\mu_1\equiv \text{const}$, which implies $\la_1,\la_3$ are both constant on $D_5$.
By $h_{12j}^1=h_{13j}^1=h_{23j}^1=h_{13j}^2=h_{23j}^2=h_{33j}^2=0$,
we get
\begin{equation}\label{CaseII.41}
\aligned
\lan \n_{E_j}\nu_1,\nu_2\ran=&2\lan \n_{E_j}E_1,E_2\ran,\\
\lan \n_{E_j}\nu_1,\mu_3\nu_3\ran=&\mu_1\lan \n_{E_j}E_1,E_3\ran,\\
\lan \n_{E_j}\nu_1,\mu_3\nu_4\ran=&-\mu_1\lan \n_{E_j}E_2,E_3\ran,\\
\lan \n_{E_j}\nu_2,\mu_3\nu_3\ran=&\mu_1\lan \n_{E_j}E_2,E_3\ran,\\
\lan \n_{E_j}\nu_2,\mu_3\nu_4\ran=&\mu_1\lan \n_{E_j}E_1,E_3\ran,\\
\lan \n_{E_j}\nu_2,\nu_5\ran=&0.
\endaligned
\end{equation}
Combining with $h_{231}^4=h_{123}^4$ and (\ref{CaseII.41}) yields
$$\la_3\lan \n_{E_3}E_1,E_3\ran=\lan \n_{E_3}(\mu_1\nu_2),\mu_3\nu_4\ran=\la_1\lan \n_{E_3}E_1,E_3\ran,$$
i.e.
\begin{equation}
\lan \n_{E_3}E_1,E_3\ran=0.
\end{equation}
Similarly, from $h_{132}^3=h_{123}^3$, we can deduce
\begin{equation}
\lan \n_{E_3}E_2,E_3\ran=0.
\end{equation}
So
\begin{equation}\label{CaseII.44}
\n_{E_3}E_3=0,
\end{equation}
i.e. the integral curve $\xi$ of $E_3$ is a geodesic of $M$. Thereby, without loss of generality
we can assume $E_1,E_2$ are both parallel along $\xi$, i.e.
\begin{equation}\label{CaseII.43}
\n_{E_3}E_1=\n_{E_3}E_2=0.
\end{equation}

$h_{133}^3=h_{331}^3$ and $h_{233}^4=h_{332}^4$ respectively imply
\begin{equation}\aligned
\la_3\lan \n_{E_1}E_1,E_3\ran=&\f{1}{\sqrt{3}}\lan \n_{E_1}(\mu_3\nu_3),\mu_5\nu_5\ran,\\
\la_3\lan \n_{E_2}E_2,E_3\ran=&\f{1}{\sqrt{3}}\lan \n_{E_2}(\mu_3\nu_4),\mu_5\nu_5\ran.
\endaligned
\end{equation}
In conjunction with $h_{112}^4=h_{121}^4$, we have
$$\aligned
&-\la_1\lan \n_{E_2}E_2,E_3\ran+\la_3\lan \n_{E_1}E_1,E_3\ran\\
=&\lan \n_{E_1}(\mu_1\nu_2),\mu_3\nu_4\ran-\f{1}{\sqrt{3}}\lan \n_{E_2}(\mu_3\nu_4),\mu_5\nu_5\ran\\
=&\la_1\lan \n_{E_1}E_1,E_3\ran-\la_3\lan \n_{E_2}E_2,E_3\ran,
\endaligned$$
which is equivalent to
\begin{equation}
\lan \n_{E_2}E_2,E_3\ran=-\lan \n_{E_1}E_1,E_3\ran
\end{equation}
(since $\la_1>\la_3$). By $h_{113}^5=h_{131}^5$ and $h_{223}^5=h_{232}^5$, we get
$$\aligned
\n_{E_3}(\la_5)=&6\la_5\lan \n_{E_1}E_1,E_3\ran-\f{6}{\sqrt{3}}\lan \n_{E_1}(\mu_3\nu_3),\mu_5\nu_5\ran\\
=&6(\la_5-\la_3)\lan \n_{E_1}E_1,E_3\ran
\endaligned$$
and
$$\aligned
\n_{E_3}(\la_5)=&6\la_5\lan \n_{E_2}E_2,E_3\ran-\f{6}{\sqrt{3}}\lan \n_{E_2}(\mu_3\nu_4),\mu_5\nu_5\ran\\
=&6(\la_5-\la_3)\lan \n_{E_2}E_2,E_3\ran.
\endaligned$$
Adding both sides of the above 2 equations gives
\begin{equation}
\n_{E_3}\la_5=0
\end{equation}
and moreover
\begin{equation}\label{CaseII.45}
\lan \n_{E_1}E_1,E_3\ran=\lan \n_{E_2}E_2,E_3\ran=0
\end{equation}
whenever $x\in D_6$.

By $h_{123}^5=h_{132}^5=h_{231}^5$ and $h_{112}^3=h_{121}^3$ we have
\begin{equation}\label{CaseII.42}\aligned
\la_5\lan \n_{E_2}E_1,E_3\ran=&\f{1}{\sqrt{3}}\lan \n_{E_2}(\mu_3\nu_3),\mu_5\nu_5\ran,\\
\la_5\lan \n_{E_1}E_2,E_3\ran=&\f{1}{\sqrt{3}}\lan \n_{E_1}(\mu_3\nu_4),\mu_5\nu_5\ran
\endaligned
\end{equation}
and
$$\aligned
&\la_3\lan \n_{E_1}E_2,E_3\ran+(\la_1-2\la_3)\lan \n_{E_2}E_1,E_3\ran\\
=&\lan \n_{E_1}(\mu_1\nu_2),\mu_3\nu_3\ran-\f{1}{\sqrt{3}}\lan \n_{E_2}(\mu_3\nu_3),\mu_5\nu_5\ran\\
=&\la_1\lan \n_{E_1}E_2,E_3\ran-\la_5\lan \n_{E_2}E_1,E_3\ran
\endaligned$$
i.e.
\begin{equation}
(\la_1-\la_3)\lan \n_{E_1}E_2,E_3\ran=(\la_1-2\la_3+\la_5)\lan \n_{E_2}E_1,E_3\ran.
\end{equation}
Similarly, combining $h_{221}^4=h_{122}^4$ and (\ref{CaseII.42}), we get
\begin{equation}
(\la_1-\la_3)\lan \n_{E_2}E_1,E_3\ran=(\la_1-2\la_3+\la_5)\lan \n_{E_1}E_2,E_3\ran.
\end{equation}
Therefore
\begin{equation}\label{CaseII.46}
\begin{cases}
\lan\n_{E_2}E_1,E_3\ran+\lan \n_{E_1}E_2,E_3\ran=0 & \text{when }x\in D_6,\\
\lan \n_{E_2}E_1,E_3\ran=\lan \n_{E_1}E_2,E_3\ran & \text{when }x\notin D_6.
\end{cases}
\end{equation}
Especially, $\lan \n_{E_2}E_1,E_3\ran=\lan \n_{E_1}E_2,E_3\ran=0$ whenever $\la_3>\la_5$ (i.e. $x\in D_6$) and $2\la_1-3\la_3+\la_5\neq 0$.
Afterwards, due to $h_{331}^4=h_{131}^4$, we can derive
\begin{equation}\label{CaseII.47}
\lan \n_{E_3}(\mu_3\nu_3),\mu_3\nu_4\ran=2(\la_3-\la_5)\lan \n_{E_1}E_2,E_3\ran.
\end{equation}
with the aid of (\ref{CaseII.43}) and (\ref{CaseII.42}).

When $x\in D_6$, based on (\ref{CaseII.44}), (\ref{CaseII.43}), (\ref{CaseII.45}), (\ref{CaseII.46}) and (\ref{CaseII.47}), a similar calculation as above shows
\begin{equation}
\aligned
&\f{1}{2}\lan R_{E_1E_2}\nu_1,\nu_2\ran\\
=&R_{1212}+\f{\la_3-\la_1}{\la_3}\lan \n_{E_1}E_2,E_3\ran^2
+\f{1}{2}(|(\n_{E_2}\nu_1)^\bot|^2+|(\n_{E_1}\nu_1)^\bot|^2)\\
&-\f{1}{2\sqrt{3}\mu_1}\lan (\n_{E_2}\nu_1)^\bot, \big(\n_{E_2}(\mu_5\nu_5)\big)^\bot\ran+\f{1}{2\sqrt{3}\mu_1}\lan (\n_{E_1}\nu_1)^\bot, \big(\n_{E_1}(\mu_5\nu_5)\big)^\bot\ran,
\endaligned
\end{equation}
\begin{equation}
\aligned
&\f{1}{\mu_1}\lan R_{E_1E_3}\nu_1,\mu_3\nu_3\ran\\
=&R_{1313}+\f{2(\la_3-\la_5)}{\la_3}\lan \n_{E_1}E_2,E_3\ran^2
+(|(\n_{E_3}\nu_1)^\bot|^2\\
&-\f{1}{\sqrt{3}\mu_1}\lan (\n_{E_3}\nu_1)^\bot, \big(\n_{E_3}(\mu_5\nu_5)\big)^\bot\ran-\f{2}{\sqrt{3}\mu_1}\lan (\n_{E_1}\nu_1)^\bot, \big(\n_{E_1}(\mu_5\nu_5)\big)^\bot\ran,
\endaligned
\end{equation}
\begin{equation}
\aligned
&-\f{1}{\mu_1}\lan R_{E_2E_3}\nu_1,\mu_3\nu_4\ran\\
=&R_{2323}+\f{2(\la_3-\la_5)}{\la_3}\lan \n_{E_1}E_2,E_3\ran^2
+(|(\n_{E_3}\nu_1)^\bot|^2\\
&+\f{1}{\sqrt{3}\mu_1}\lan (\n_{E_3}\nu_1)^\bot, \big(\n_{E_3}(\mu_5\nu_5)\big)^\bot\ran+\f{2}{\sqrt{3}\mu_1}\lan (\n_{E_2}\nu_1)^\bot, \big(\n_{E_2}(\mu_5\nu_5)\big)^\bot\ran
\endaligned
\end{equation}
and hence
\begin{equation}\label{CaseII.48}
\aligned
&\big(\lan R_{E_1E_2}\nu_1,\nu_2\ran-2R_{1212}\big)+\left(\f{1}{2\mu_1}\lan R_{E_1E_3}\nu_1,\mu_3\nu_3\ran-\f{1}{2}R_{1313}\right)\\
&+\left(-\f{1}{2\mu_1}\lan R_{E_2E_3}\nu_1,\mu_3\nu_4\ran-\f{1}{2}R_{2323}\right)\\
=&\f{2(2\la_3-\la_1-\la_5)}{\la_3}\lan \n_{E_1}E_2,E_3\ran^2+\sum_{j=1}^3|(\n_{E_j}\nu_1)^\bot|^2\\
=&\f{2(\la_1-\la_3)}{\la_3}\lan \n_{E_1}E_2,E_3\ran^2+\sum_{j=1}^3|(\n_{E_j}\nu_1)^\bot|^2.
\endaligned
\end{equation}
Again using the Gauss equations and the Ricci equations, we know the LHS of (\ref{CaseII.48}) is $0$, hence
$\lan \n_{E_1}E_2,E_3\ran=0$ and then a direct calculation gives $R_{1313}=0$, which contradicts to
$$\aligned
R_{1313}=&\lan \ol{R}_{E_1E_3}E_1,E_3\ran+\lan B_{E_1E_1},B_{E_3E_3}\ran-\lan B_{E_1E_3},B_{E_1E_3}\ran
=&1-\f{\la_5}{3}-\f{\la_3}{2}>\f{3}{8}.
\endaligned$$
Thus $D_6=\emptyset$, i.e. $\la_3=\la_5$ everywhere on $D_5$. By (\ref{CaseII.44}), (\ref{CaseII.43}) and (\ref{CaseII.46}), we have
\begin{equation}
\aligned
R_{3131}=&\lan -\n_{E_3}\n_{E_1}E_3+\n_{E_1}\n_{E_3}E_3+\n_{[E_3,E_1]}E_3,E_1\ran\\
=&-\n_{E_3}\lan \n_{E_1}E_3,E_1\ran+\lan \n_{E_1}E_3,\n_{E_3}E_1\ran\\
&+\lan [E_3,E_1],E_1\ran\lan \n_{E_1}E_3,E_1\ran+\lan [E_3,E_1],E_2\ran\lan \n_{E_2}E_3,E_1\ran\\
=&-\n_{E_3}\lan \n_{E_1}E_3,E_1\ran-\lan \n_{E_1}E_3,E_1\ran^2-\lan \n_{E_1}E_2,E_3\ran^2
\endaligned
\end{equation}
and similarly
\begin{equation}
R_{3232}=-\n_{E_3}\lan \n_{E_2}E_3,E_2\ran-\lan \n_{E_2}E_3,E_2\ran^2-\lan \n_{E_1}E_2,E_3\ran^2
\end{equation}
Adding both hand sides of the above 2 equations, then applying yields $R_{3131}+R_{3232}\leq 0$,
which cause a contradiction to $R_{1313}=R_{2323}=1-\f{\la_5}{3}-\f{\la_3}{2}>\f{3}{8}$. Thus
$D_5$ has to be an empty set, i.e. $\la_5\equiv 0$ on $D_4$. For any $x\in D_4$, we can proceed as above
to compute $\big(\lan R_{E_1E_2}\nu_1,\nu_2\ran-2R_{1212}\big)+\big(\f{1}{2\mu_1}\lan R_{E_1E_3}\nu_1,\mu_3\nu_3\ran-\f{1}{2}R_{1313}\big)$,
obtaining a contradiction, so $D_4=\emptyset$, i.e. $\la_3\equiv 0$ on $\Om_2$. In this case, a calculation as in
(\ref{CaseII.49})-(\ref{CaseII.50}) gives $R_{1313}=0$, causing a contradiction to the corollary of the Gauss equations.

In a summary, $\Om_2$ has to be empty, which is equivalent to the following result.
\begin{lem}\label{CaseII}
Let $M$ be a 3-dimensional compact SCDK-ideal manifold in $S^{3+m}$, then $\la_2=\la_3$ holds everywhere on $M$.
\end{lem}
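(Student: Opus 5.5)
The plan is to argue by contradiction and show that the open set $\Om_2=\{x\in M:\la_2>\la_3\}$ is empty. The argument is organized as a nested exhaustion of $\Om_2$ by open subsets, chosen so that on each one the multiplicity $r$ of $\la_1$ is locally constant. On such a set, (\ref{c1}) and (\ref{c2}) hold pointwise. Proposition \ref{dim3} (Case II) then gives a smooth local adapted frame in which $B$ has the normal form (\ref{B2}), and $h_{ijk}^\a=0$ for every $\a\le r$.

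The first step treats $D_1=\{\la_1>\la_2\}\cap\Om_2$, so that $r=1$. I refine it as $D_3\subset D_2\subset D_1$ according to whether $\la_5>0$ and whether $\la_3>0$, working from the most generic set $D_3$ outward.
\begin{enumerate}
\item From $h^1_{11j}=h^1_{22j}=0$, obtain that $\mu_1$ is constant and that $\lan\n\nu_1,\nu_5\ran=0$. Combined with the constraint $\la_1+2\la_2+\la_3=3$, this gives $2\n\la_2+\n\la_3=0$.
\item Use the vanishing of the remaining $h^1_{ijk}$ to express every $\lan\n_{E_j}\nu_1,\nu_\a\ran$ through the tangent connection forms $\lan\n_{E_j}E_k,E_l\ran$.
\item Take the full list of Codazzi symmetries $h^\a_{ijk}=h^\a_{ikj}$ for $\a=2,\dots,5$. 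This gives a linear system in the connection coefficients, the mixed terms $\lan\n(\mu_\a\nu_\a),\mu_\be\nu_\be\ran$ and the derivatives of the $\la_\a$. Using $\la_1>\la_2>\la_3\ge\la_5$, solve it so that almost all of these quantities vanish or are expressed through $\lan\n_{E_1}E_1,E_3\ran$, $\lan\n_{E_1}E_2,E_3\ran$ and $\lan\n_{E_3}E_1,E_2\ran$.
\item Compute a suitable linear combination of the Ricci curvatures $\lan R_{E_iE_j}\nu_1,\nu_\a\ran$ in two ways. On one side, differentiate the relations from step 2, which produces the tangent curvatures $R_{ijij}$ plus a sum of squares with positive coefficients together with the terms $|(\n_{E_j}\nu_1)^\bot|^2$. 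On the other side, evaluate the Ricci and Gauss equations at the normal form; this combination comes out to $\la_1+2\la_2+\la_3-3=0$.
\item Conclude that each square vanishes. This forces $R_{1313}=0$, whereas the Gauss equation gives $R_{1313}=1-\la_5/3-\la_3/2>3/8$, because $\la_3<3/4$.
\end{enumerate}
The degenerate sets $D_2\setminus D_3$ (where $\la_5=0$) and $D_1\setminus D_2$ (where $\la_3=0$) are handled by the same computation with fewer normals. In the last case one gets $\n E_3\perp E_1,E_2$ and $R_{1313}=0$, against the Gauss value $1$.

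The second step treats the part of $\Om_2$ where $\la_1=\la_2$, so that $r=2$. Now both $h^1_{ijk}$ and $h^2_{ijk}$ vanish, and I split into $D_6\subset D_5\subset D_4$ according to $\la_3>\la_5$, $\la_5>0$ and $\la_3>0$. The Codazzi equations give $\n_{E_3}E_3=0$. I then choose $E_1,E_2$ parallel along the integral curves of $E_3$. On $D_6$, the same Ricci-minus-Gauss combination again forces a vanishing connection and $R_{1313}=0$, which is impossible. On $D_5\setminus D_6$, i.e. where $\la_3=\la_5$, I use a Riccati-type identity: $R_{3131}+R_{3232}$ equals minus the $E_3$-derivative of the mean curvature of the distribution spanned by $E_1,E_2$, minus nonnegative squares. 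The Codazzi relations force the trace $\lan\n_{E_1}E_1,E_3\ran+\lan\n_{E_2}E_2,E_3\ran$ to vanish identically, so the derivative term drops out and $R_{3131}+R_{3232}\le0$. This contradicts the Gauss equation, which gives each of $R_{1313},R_{2323}$ larger than $3/8$. Finally, $\la_3=0$ is handled as in the $r=1$ case.

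The main obstacle is step 3 in the generic set $D_3$. The Codazzi system there is large, and one must find exactly which linear combinations isolate each connection coefficient, so that the final Ricci-minus-Gauss identity has only nonnegative coefficients on the right-hand side. The hypotheses $\la_1>\la_2>\la_3\ge\la_5$ are precisely what should make those coefficients positive.
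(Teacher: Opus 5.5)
Your proposal follows the paper's proof essentially step for step. It uses the same exhaustion $D_3\subset D_2\subset D_1$ for $r=1$ and then $D_6\subset D_5\subset D_4$ once $\la_1=\la_2$, the same use of $h^1_{ijk}=0$ to write $\lan\n_{E_j}\nu_1,\nu_\a\ran$ through tangent connection forms, the same Ricci-minus-Gauss combination whose left side is $\la_1+2\la_2+\la_3-3=0$, and the same Riccati-type contradiction $R_{3131}+R_{3232}\le 0$ on $\{\la_3=\la_5\}$.

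Two small points need to be made explicit:
\begin{itemize}
\item The squares in that Riccati identity are nonnegative only because the Codazzi relations give $\lan\n_{E_1}E_2,E_3\ran=\lan\n_{E_2}E_1,E_3\ran$ exactly when $\la_3=\la_5$. This says $E_3^\bot$ is integrable; otherwise the skew part of $\n E_3$ would enter with the wrong sign.
\item The stratum $D_4\setminus D_5$, where $\la_3>0=\la_5$, must also be eliminated. The $D_6$-type combination with $\nu_5$ dropped does this.
\end{itemize}
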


\subsection{Case III: $\la_2=\la_3=\la_4$}

As shown in Lemma \ref{CaseI} and Lemma \ref{CaseII}, $\la_2=\la_3=\la_4$ holds at each point of $M$, i.e. $\Om_3=M$.
Then there exists $S\subset M$ of measure $0$, such that for each $x\notin S$, there exists an orthonormal basis
$e_1,e_2,e_3$ of $T_x M$ and an orthonormal basis $\nu_1,\cdots,\nu_m$ of $N_x M$, such that
\begin{equation}\label{CaseIII.1}\aligned
A^1=\mu_1\text{diag}(\eta_1,\eta_2,\eta_3),
\qquad\qquad&
A^2=\f{\mu_2}{\sqrt{2}}(E_{12}+E_{21})\\
A^3=\f{\mu_2}{\sqrt{2}}(E_{13}+E_{31}),\qquad\qquad&
A^4=\f{\mu_2}{\sqrt{2}}(E_{23}+E_{32}),\\
A^5=\f{\mu_5}{\sqrt{3}}\text{diag}(\eta_2-\eta_3,\eta_3-\eta_1,\eta_1-\eta_2),\quad&
A^\a=0\ (\forall \a\geq 6).
\endaligned
\end{equation}
Then
\begin{equation}\aligned
&[A^1,A^2]=\f{\mu_1\mu_2}{\sqrt{2}}(\eta_1-\eta_2)(E_{12}-E_{21}),\quad [A^5,A^2]=-\f{3\mu_2\mu_5}{\sqrt{6}}\eta_3(E_{12}-E_{21}),\\
&[A^1,A^3]=\f{\mu_1\mu_2}{\sqrt{2}}(\eta_1-\eta_3)(E_{13}-E_{31}),\quad [A^5,A^3]=-\f{3\mu_2\mu_5}{\sqrt{6}}\eta_2(E_{31}-E_{13}),\\
&[A^1,A^4]=\f{\mu_1\mu_2}{\sqrt{2}}(\eta_2-\eta_3)(E_{23}-E_{32}),\quad [A^5,A^3]=-\f{3\mu_2\mu_5}{\sqrt{6}}\eta_1(E_{23}-E_{32}),\\
&[A^2,A^3]=\f{\la_2}{2}(E_{23}-E_{32}),\ [A^3,A^4]=\f{\la_2}{2}(E_{12}-E_{21}),\ [A^4,A^2]=\f{\la_2}{2}(E_{31}-E_{13})
\endaligned
\end{equation}
and $[A^1,A^5]=0$. Hence
\begin{equation}
\sum_{\a,\be}\|[A^\a,A^\be]\|^2
=6\la_1\la_2+6\la_2\la_5+3\la_2^2.
\end{equation}
In conjunction with
\begin{equation}
3=\sum_{\be=1}^3 \la_\be+\la_2=\la_1+3\la_2
\end{equation}
and $\la_1\geq \la_2\geq \la_5\geq 0$,
we have
\begin{equation}
\aligned
&n|B|^2-\sum_{\a,\be}\lan A^\a,A^\be\ran-\sum_{\a,\be} \|[A^\a,A^\be\|^2\\
=&3\sum_{\a=1}^m \la_\a-\sum_{\a=1}^m \la_\a^2-\sum_{\a,\be} \|[A^\a,A^\be\|^2\\
=&3\la_2^2+3\la_5-\la_5^2-6\la_2\la_5\\
\geq &\min\{3\la_2^2,3\la_2-4\la_2^2\}\geq 0,
\endaligned
\end{equation}
where the equality holds if and only if $\la_2=0$ or $\la_1=\la_2=\la_5=\f{3}{4}$.
Applying (\ref{LaB2}), we have
\begin{equation}\aligned
0=&\f{1}{2}\int_M \De |B|^2*1=\int_M \left(|\n B|^2+n|B|^2-\sum_{\a,\be}\lan A^\a,A^\be\ran-\sum_{\a,\be} \|[A^\a,A^\be\|^2\right)*1\\
=&\int_M \left(|\n B|^2+3\la_2^2+3\la_5-\la_5^2-6\la_2\la_5\right)*1\geq 0.
\endaligned
\end{equation}
This forces $\n B\equiv 0$ and $(\la_1,\la_2,\la_5)\equiv (3,0,0)$ or $(\f{3}{4},\f{3}{4},\f{3}{4})$ on $M$.
If $\la_1\equiv 3$ and $\la_\a\equiv 0$ for all $\a\geq 2$, it is shown in \cite{C-D-K} that
$M$ is a generalized Clifford torus. Now we consider the latter case.

Let $M$ be a 3-dimensional compact SCDK-ideal manifold in $S^{3+m}$, with
$\la_1\equiv \cdots\equiv \la_5\equiv \f{3}{4}$ and $\la_\a\equiv 0$ for each $\a\geq 6$.
Then (\ref{CaseIII.1}) is equivalent to saying that
\begin{equation}
\aligned
&B_{e_1e_2}=\f{\sqrt{6}}{4}\nu_2,\quad B_{e_1e_3}=\f{\sqrt{6}}{4}\nu_3,\quad B_{e_2e_3}=\f{\sqrt{6}}{4}\nu_4,\\
&B_{e_1e_1}=\f{\sqrt{3}}{2}\eta_1\nu_1+\f{1}{2}(\eta_2-\eta_3)\nu_5,\ B_{e_2e_2}=\f{\sqrt{3}}{2}\eta_2\nu_1+\f{1}{2}(\eta_3-\eta_1)\nu_5,\\
&B_{e_3e_3}=\f{\sqrt{3}}{2}\eta_3\nu_1+\f{1}{2}(\eta_1-\eta_2)\nu_5.
\endaligned
\end{equation}
Hence
\begin{equation}\label{CaseIII.2}
\aligned
&|B_{e_1e_1}|=|B_{e_2e_2}|=|B_{e_3e_3}|=\f{\sqrt{2}}{2},\\
&|B_{e_1e_2}|=|B_{e_1e_3}|=|B_{e_2e_3}|=\f{\sqrt{6}}{4},\\
&\lan B_{e_1e_1},B_{e_2e_2}\ran=\lan B_{e_2e_2},B_{e_3e_3}\ran=\lan B_{e_3e_3},B_{e_1e_1}\ran=-\f{1}{4},\\
&\lan B_{e_ie_j},B_{e_ke_l}\ran=0\text{ whenever }i\neq j\text{ and }\{i,j\}\neq \{k,l\}.
\endaligned
\end{equation}
It follows from the Gauss equations that
\begin{equation}\aligned
R_{ijkl}=&\lan \ol{R}_{e_ie_j}e_k,e_l\ran+\lan B_{e_ie_k},B_{e_je_l}\ran-\lan B_{e_ie_l},B_{e_je_k}\ran\\
=&\f{3}{8}(\de_{ik}\de_{jl}-\de_{il}\de_{jk}),
\endaligned
\end{equation}
i.e. $M$ is a $3$-dimensional space form of constant curvature $\f{3}{8}$.
For an arbitrary unit vector $v\in T_x M$, there exists $w:=\cos\th\ e_1+\sin\th\ e_2$
and $\g\in [-\f{\pi}{2},\f{\pi}{2}]$, such that $v=\cos\g\ w+\sin\g\ e_3$, then
\begin{equation}
\aligned
B_{ww}=&(\cos^2\th\ B_{e_1e_1}+\sin^2\th\ B_{e_2e_2})+2\cos\th\sin\th\ B_{e_1e_2},\\
B_{we_3}=&\cos\th\ B_{e_1e_3}+\sin\th\ B_{e_2e_3},\\
B_{vv}=&(\cos^2\g\ B_{ww}+\sin^2\g\ B_{e_3e_3})+2\cos\g\sin\g\ B_{we_3}
\endaligned
\end{equation}
and a calculation based on \ref{CaseIII.2} shows
\begin{equation}
\aligned
&\lan B_{ww},B_{ww}\ran=\f{1}{2},\qquad \lan B_{ww},B_{e_3e_3}\ran=-\f{1}{4},\\
&\lan B_{we_3},B_{we_3}\ran=\f{3}{8},\qquad \lan B_{ww},B_{we_3}\ran=\lan B_{e_3e_3},B_{we_3}\ran=0
\endaligned
\end{equation}
and hence
\begin{equation}
|B_{vv}|=\f{\sqrt{2}}{2},
\end{equation}
i.e. $M$ is an isotropic submanifold of $S^{3+m}$ in the sense of Itoh-Ogiue \cite{I-O}. Applying the main theorems of this paper, we see $M$ has to be a Veronese 3-manifold.
This completes the proof of Theorem \ref{main-thm}.


	\bigskip\bigskip

	\bibliographystyle{amsplain}

\end{document}